\makeatletter \@addtoreset{equation}{section} \makeatother
\renewcommand\thetable{\thesection.\@arabic\c@table}
\theoremstyle{plain}
\newtheorem{maintheorem}{Theorem}
\newtheorem{maincorollary}{Corollary}
\newtheorem{theorem}{Theorem}[section]
\newtheorem{lemma}{Lemma}[section]
\newtheorem{corollary}{Corollary}[section]
\newtheorem{definition}{Definition}[section]
\newtheorem{remark}{Remark}[section]
\newtheorem{Que}{Question}[section]
\theoremstyle{remark}
\long\def\begcom#1\endcom{}
\newcommand{\length}{\operatorname{\length}}
\def\length{\operatorname{length}}
\newcommand{\bl} {\begin{lemma}}
\newcommand{\el} {\end{lemma}}
\newcommand{\bt} {\begin{theorem}}
\newcommand{\et} {\end{theorem}}
\newcommand{\bp}{\begin{proof}}
\newcommand{\ep}{\end{proof}}
\newcommand  {\ee} {\end{equation}}
\newcommand  {\beq} {\begin{eqnarray*}}
\newcommand  {\eeq} {\end{eqnarray*}}
\newcommand  {\bd} {\begin{definition}}
\newcommand  {\ed} {\end{definition}}
\newcommand{\supp}{\operatorname{supp}}
\newcommand{\diam}{\operatorname{diam}}
\newcommand{\cM}{\mathcal{M}}
\def\ep{\noindent{\hfill $\Box$}}
\begin{document}

\title{Ergodic Optimization restricted on certain subsets of invariant measures} 

\author{Wanshan Lin and Xueting Tian}

\address{Wanshan Lin, School of Mathematical Sciences,  Fudan University\\Shanghai 200433, People's Republic of China}
\email{21110180014@m.fudan.edu.cn}

\address{Xueting Tian, School of Mathematical Sciences,  Fudan University\\Shanghai 200433, People's Republic of China}
\email{xuetingtian@fudan.edu.cn}

\begin{abstract}
In this article, we pay attention to transitive dynamical systems having the shadowing property and the entropy functions are upper semicontinuous. As for these dynamical systems, when  we consider ergodic optimization restricted on the subset of invariant measures whose metric entropy are equal or greater than a given constant, we prove that for generic real continuous functions the ergodic optimization measure is unique, ergodic, full support and have metric entropy equal to the given constant. Similar results also hold for suspension flows over transitive subshift of finite type, $C^r(r\geq2)$-generic geometric Lorenz attractors and $C^1$-generic singular hyperbolic attractors. 
\end{abstract}

\keywords{Ergodic optimization, Residual property, Shadowing property, Entropy, Topological pressure}
\subjclass[2020] {37A05; 37B40; 37C50; 37D20; 37D25; 37D35.}
\maketitle

\section{Introduction}

Suppose that $(X,T)$ is a dynamical system, where $(X,d)$ is a compact metric space with Borel $\sigma$-algebra $\mathcal{B}(X)$ and $T:X \rightarrow X$ is a continuous map. Let $\cM(X)$, $\cM(X,T)$, $\cM^{e}(X,T)$ denote the spaces of probability measures, $T$-invariant, $T$-ergodic probability measures, respectively. Choose a metric $\varrho$ that induces the weak$^\ast$ topology on $\cM(X)$. Let $\mathbb{Z}$, $\mathbb{N}$, $\mathbb{N^{+}}$ denote integers, non-negative integers, positive integers, respectively. Let $C(X)$ denote the space of real continuous functions on $X$ with the norm $\|f\|:=\sup\limits_{x\in X}|f(x)|$. In a Baire space, a set is said to be residual if it has a dense $G_\delta$ subset and it can be checked that a residual subset of a residual subset is still a residual subset of the total space.

The concept of shadowing property (or known as the pseudo-orbit tracing property) were born during the fundamental work of Anosov and Bowen on Axiom A diffeomorphisms. In later researches, it was discovered that the concept can bring many interesting results (see for example \cite{DGS1976,LO2018,TKSM2011,TKSM-PO2013}). Our work mainly base on the work in \cite{J2006,LO2018,M2010,STVW2021}. 

Denote $h_{top}(T)$ the \emph{topological entropy} of $(X,T)$. Given $\mu\in\cM(X,T)$, the \emph{metric entropy} of $\mu$ is denoted by $h_\mu(T)$  and the \emph{support} of $\mu$ is denoted by $\supp{(\mu)}$, which means the smallest closed subset $A$ of $X$ with $\mu(A)=1$. Given $\varphi\in C(X)$, denote $P(T,\varphi)$ the \emph{topological pressure} of $\varphi$.

In the research field of ergodic optimisation, one thing we hope to obtain is that for generic real continuous functions (resp. H\"older continuous functions), the ergodic optimization measure is unique and ``simple''. For example, in \cite{Bremont 2008}, Br\'{e}mont proved that for generic continuous functions, ergodic optimization measure is unique and has zero metric entropy under the assumptions: periodic measure is dense in $\cM(X,T)$ and the entropy function is upper semicontinuous; in \cite{Morris 2008}, Morris proved that for subshifts of finite type, for generic H\"older continuous functions, ergodic optimization measure is unique and has zero metric entropy; in \cite{Contreras 2016}, Contreras proved that for expanding dynamical systems, for generic Lipschitz functions, the ergodic optimization measure is unique and supported on a single periodic orbit. The metric entropy is a quantity that represents the complexity of a measure. Hence, when we consider the ergodic optimization restricted on the subset of invariant measures whose metric entropy are equal or greater than a given constant, we hope to obtain a similar result.
\begin{maintheorem}\label{mt-1}
Suppose that $(X,T)$ is transitive, has the shadowing property and entropy function is upper semicontinuous. Then for $\varphi\in C(X)$ and $\sup\limits_{\mu\in\cM(X,T)}\int\varphi\mathrm{d\mu}\leq c<P(T,\varphi)$, there is a residual subset $\mathcal{R}$ of $C(X)$, such that for any $f\in\mathcal{R}$, there is a unique $\mu_f^c\in\cM(X,T)$ such that
	\begin{enumerate}[(1)]
		\item $\int f\mathrm{d}{\mu_f^c}=\sup\{\int f\mathrm{d\mu}\mid h_\mu(T)+\int\varphi\mathrm{d\mu}\geq c\}$;
		\item $\mu_f^c\in\cM^e(X,T)$;
		\item $\supp{(\mu_f^c)}=X$;
		\item $h_{\mu_f^c}(T)+\int\varphi\mathrm{d}\mu_f^c=c$.
    \end{enumerate}	
	Moreover, when $T$ is $C^1$ transitive Anosov diffeomorphism and $X$ is a compact Riemannian manifold. we can require that
	\begin{enumerate}[(5)] 
		\item $\mu_f^c$ isn't physical-like.	
	\end{enumerate} 
\end{maintheorem}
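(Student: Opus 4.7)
The strategy centers on the weak$^*$-compact convex constraint set
\[
K_c := \bigl\{\mu \in \cM(X,T) : h_\mu(T) + \int \varphi\, d\mu \geq c\bigr\},
\]
which is non-empty because $c < P(T,\varphi)$ and the variational principle, and compact because entropy is upper semicontinuous and $\varphi$ is continuous. Put $\beta^c(f) := \sup_{\mu\in K_c}\int f\,d\mu$ and $M_f^c := \{\mu\in K_c: \int f\,d\mu = \beta^c(f)\}$. The first step is uniqueness: $\beta^c\colon C(X)\to\R$ is convex and $1$-Lipschitz on the separable Banach space $C(X)$, so Mazur's theorem provides a dense $G_\delta$ set $\cR_0 \subseteq C(X)$ of G\^ateaux-differentiability points of $\beta^c$. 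At each $f\in\cR_0$ the subdifferential of $\beta^c$ is a singleton, which in our setting is equivalent to $M_f^c$ being a singleton $\{\mu_f^c\}$, and standard upper-hemicontinuity of subdifferentials makes $f\mapsto\mu_f^c$ weak$^*$-continuous on $\cR_0$. This gives existence, uniqueness and item~(1).

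Next I would handle the entropy equality (4) by a simple dichotomy. Fix $f\in\cR_0$ and set $h^*(f):= h_{\mu_f^c}(T)+\int\varphi\,d\mu_f^c$. Assume $h^*(f)>c$. Since entropy is affine on $\cM(X,T)$, for every $\nu\in\cM(X,T)$ the measure $(1-t)\mu_f^c + t\nu$ lies in $K_c$ for small $t>0$, and maximality of $\mu_f^c$ then gives $\int f\,d\nu\leq\int f\,d\mu_f^c$; equivalently, $\mu_f^c$ is an \emph{unconstrained} maximizer over $\cM(X,T)$. Shadowing and transitivity imply density of periodic measures in $\cM(X,T)$, so the hypotheses of Br\'emont~\cite{Bremont 2008} are satisfied, furnishing a residual $\cR_1\subseteq C(X)$ on which the unconstrained maximizer is unique and has zero metric entropy. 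On $\cR_0\cap\cR_1$ this forces $h_{\mu_f^c}(T)=0$, hence $\int\varphi\,d\mu_f^c>c$, contradicting $\sup_\mu\int\varphi\,d\mu\leq c$. Therefore $h^*(f)=c$ on $\cR_0\cap\cR_1$, proving (4).

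Items (2) and (3) hinge on the key structural input supplied by the shadowing property together with u.s.c.\ entropy as in~\cite{LO2018,STVW2021}: the set
\[
E_c := \bigl\{\mu\in\cM^e(X,T): \supp(\mu)=X \text{ and } h_\mu(T)+\int\varphi\,d\mu = c\bigr\}
\]
is weak$^*$-dense in the level set $\{\mu\in K_c: h_\mu+\int\varphi\,d\mu = c\}$. Granted this, for every non-empty open $U\subseteq X$ the set $\{f\in\cR_0\cap\cR_1: \mu_f^c(U)>0\}$ is open (by continuity of $f\mapsto\mu_f^c$ and lower semicontinuity of $\mu\mapsto\mu(U)$) and dense: given $f_0$, pick $\nu\in E_c$ close to $\mu_{f_0}^c$ (which automatically satisfies $\nu(U)>0$) and add a small tilt $t\,g$ with $g\in C(X)$ positive on $U$, producing a perturbed $f_1$ in the residual set whose maximizer inherits positive mass on $U$. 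Intersecting over a countable base of $X$ yields (3). An analogous tilt argument, where the target $\nu\in E_c$ is chosen to break any candidate non-trivial ergodic decomposition of the maximizer, delivers (2).

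The principal obstacle is the structural lemma asserting $E_c$ dense in the level set $\{h+\int\varphi=c\}$. Entropy-density of ergodic, fully-supported measures in $\cM(X,T)$ is classical under shadowing, but enforcing the exact equation $h_\mu+\int\varphi\,d\mu=c$ requires interpolating between shadowed periodic orbits (of entropy zero) and entropy-rich ergodic reference measures, which is precisely the content drawn from~\cite{LO2018,STVW2021}. For the Anosov addendum~(5), physical-like measures form a distinguished subclass of $\cM(X,T)$; in the $C^1$-generic Anosov setting one refines $E_c$ by intersecting with the complement of this subclass (without destroying density, by standard $C^1$-generic arguments) and then reruns the same tilt-selection scheme to obtain the final residual set on which $\mu_f^c$ is additionally not physical-like.
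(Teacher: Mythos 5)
Your dichotomy argument for item~(4) is a genuinely different and rather elegant route compared with the paper's: you show that if $h_{\mu_f^c}(T)+\int\varphi\,d\mu_f^c>c$ then the constrained maximizer is an interior point of $\Lambda_c$, hence an \emph{unconstrained} maximizer, and you then invoke Br\'emont's theorem (periodic measures dense because of transitivity+shadowing, entropy u.s.c.) to conclude it has zero entropy, forcing $\int\varphi\,d\mu_f^c>c\geq\sup_\mu\int\varphi\,d\mu$, a contradiction. The paper instead gets (4) in the same stroke as (2) and (3), by proving that $\Delta_c\cap\cM^e(X,T)$ is residual in $\Lambda_c$ (Lemma~\ref{lemma 3.1}) and then running everything through the abstract transfer machinery of Theorem~\ref{theorem 3.1}. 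Your route is shorter for (4) alone, but it does not help with the other items, which is where the real work lies.

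The genuine gap is in the ``tilt'' step you use for items (2), (3) and (5). You pick $\nu\in E_c$ close to $\mu_{f_0}^c$ and claim that adding a small perturbation $tg$ with $g$ positive on $U$ produces an $f_1$ whose maximizer lies near $\nu$. This does not follow: weak$^*$ closeness of $\nu$ to $\mu_{f_0}^c$ says nothing about whether $\nu$ maximizes $\int(f_0+tg)$ over $\Lambda_c$, and a generic small tilt has no reason to select $\nu$. What the paper actually needs here is the full Bishop--Phelps mechanism (Lemma~\ref{lem3.2}--Lemma~\ref{lem3.7}): first Bishop--Phelps produces a nearby $\tilde g$ whose maximizer $\mu$ satisfies the \emph{strong} estimate $|\int\psi\,d\mu-\int\psi\,d\nu|\le\kappa\|\psi\|$ with $\kappa<2$; then Morris's decomposition lemma writes $\mu=(1-\lambda)\hat\mu+\lambda\nu$; the $\Lambda^\star$ condition (the ``weak-face'' property you do not address) guarantees $\hat\mu\in\Lambda_c$, so both $\hat\mu,\nu\in\cM^{\Lambda_c}_{max}(\tilde g)$; and finally Jenkinson's theorem (every ergodic measure is uniquely maximizing) supplies an additional perturbation that pins the maximizer exactly on $\nu$. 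Your proposal compresses all of this into ``add a small tilt,'' which skips the essential steps. Relatedly, you assert density of $E_c$ in the level set $\{h+\int\varphi=c\}$, but what the argument actually requires is residuality of the ergodic, fully supported, level-$c$ measures in the whole constraint set $\Lambda_c$, which is what Lemmas~\ref{lemma 3.1} and~\ref{lem4.9} establish; density in the level set alone would not feed into the transfer theorem. Your treatment of (5) suffers from the same problem, and additionally papers over the specific structural input (Theorem~\ref{the2.1} and the weak-face property of $PE_T$) that makes $\Lambda_c\setminus\mathcal{O}_T$ open and dense in $\Lambda_c$.
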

Directly from Theorem \ref{mt-1}, we have 
\begin{maincorollary}\label{mc-1}
Suppose that $(X,T)$ is transitive, has the shadowing property and entropy function is upper semicontinuous. Then for $0\leq c<h_{top}(T)$, there is a residual subset $\mathcal{R}$ of $C(X)$, such that for any $f\in\mathcal{R}$, there is a unique $\mu_f^c\in\cM(X,T)$ such that
	\begin{enumerate}[(1)]
		\item $\int f\mathrm{d}{\mu_f^c}=\sup\{\int f\mathrm{d\mu}\mid h_\mu(T)\geq c\}$;
		\item $\mu_f^c\in\cM^e(X,T)$;
		\item $\supp{(\mu_f^c)}=X$;
		\item $h_{\mu_f^c}(T)=c$. 	
	\end{enumerate}
Moreover, when $T$ is $C^1$ transitive Anosov diffeomorphism and $X$ is a compact Riemannian manifold. we can require that
\begin{enumerate}[(5)] 
	\item $\mu_f^c$ isn't physical-like.	
\end{enumerate}  
\end{maincorollary}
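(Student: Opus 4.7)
The plan is simply to specialize Theorem \ref{mt-1} to the choice $\varphi \equiv 0$. For this constant function we have $\varphi \in C(X)$ and $\int\varphi\,\mathrm{d}\mu = 0$ for every $\mu \in \cM(X,T)$, so $\sup_{\mu\in\cM(X,T)}\int\varphi\,\mathrm{d}\mu = 0$. By the variational principle, $P(T,0) = h_{top}(T)$. Consequently, the range condition $\sup_{\mu\in\cM(X,T)}\int\varphi\,\mathrm{d}\mu\leq c < P(T,\varphi)$ appearing in the hypothesis of Theorem \ref{mt-1} becomes exactly $0\leq c < h_{top}(T)$, which is the hypothesis of the corollary.

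The structural assumptions on $(X,T)$ (transitivity, shadowing property, and upper semicontinuity of the entropy function) are identical in the two statements, so Theorem \ref{mt-1} directly supplies a residual subset $\mathcal{R}\subset C(X)$ such that for every $f\in\mathcal{R}$ there is a unique $\mu_f^c\in\cM(X,T)$ satisfying conditions (1)--(4) of the theorem. Substituting $\varphi = 0$ collapses the expression $h_\mu(T)+\int\varphi\,\mathrm{d}\mu$ to $h_\mu(T)$, so condition (1) of the theorem turns into $\int f\,\mathrm{d}\mu_f^c=\sup\{\int f\,\mathrm{d}\mu\mid h_\mu(T)\geq c\}$ and condition (4) turns into $h_{\mu_f^c}(T)=c$, matching the claims of the corollary verbatim. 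Conditions (2) and (3), namely ergodicity and full support of $\mu_f^c$, transcribe with no change.

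For the supplementary clause (5), when $T$ is a $C^1$ transitive Anosov diffeomorphism on a compact Riemannian manifold, the corresponding assertion in Theorem \ref{mt-1} applies to $\varphi = 0$ and yields that $\mu_f^c$ is not physical-like. Since the corollary is a direct specialization, there is no essential obstacle to overcome: the only step is to translate the hypotheses through the identity $P(T,0)=h_{top}(T)$ and observe that each conclusion of Theorem \ref{mt-1} reduces to its counterpart in the corollary when $\varphi$ is set to zero.
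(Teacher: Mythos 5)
Your proof is correct and follows exactly the approach the paper intends: the paper states that Corollary~\ref{mc-1} follows ``directly from Theorem~\ref{mt-1},'' and your specialization $\varphi\equiv 0$, together with the observations $\sup_{\mu}\int\varphi\,\mathrm{d}\mu=0$ and $P(T,0)=h_{\topp}(T)$, is precisely the intended reduction. All five conclusions transcribe as you describe, so nothing further is needed.
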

\begin{remark}
	Theorem \ref{mt-1} and Corollary \ref{mc-1} can be applied to 
    \begin{enumerate}[(1)]
    	\item transitive subshifts of finite type;
    	\item the subsystem consisted of a isolated transitive hyperbolic invariant set for a $C^1$ diffeomorphism;
    	\item the expanding map $x\mapsto px$ $($mod 1$)$ $(p>1)$ on the circle $\mathbb{R}/\mathbb{Z}$.
    \end{enumerate}
\end{remark}
Let $(\Sigma_k,\sigma)$ denote the two-sided \emph{full shift} with $k$-symbols.
\begin{maintheorem}\label{mt-2}
	Suppose that $(X,\sigma)$ is a transitive two-sided subshift of finite type. Let $\rho:X\mapsto(0,+\infty)$ be a continuous roof function. The flow $(\phi_t)_{t\in\mathbb{R}}$ on $X_\rho$ is the suspension flow associate with $\rho$. Then for $\psi\in C(X_\rho)$ and $\sup\limits_{\mu\in\cM(X_\rho,\phi_t)}\int \psi\mathrm{d\mu}\leq c<P(\phi_t,\psi)$, there is a residual subset $\mathcal{R}$ of $C(X_\rho)$, such that for any $f\in\mathcal{R}$, there is a unique $\mu_f^c\in\cM(X_\rho,\phi_t)$ such that
	\begin{enumerate}[(1)]
		\item $\int f\mathrm{d}{\mu_f^c}=\sup\{\int f\mathrm{d\mu}\mid h_\mu(\phi_t)+\int\psi\mathrm{d\mu}\geq c\}$;
		\item $\mu_f^c\in\cM^e(X_\rho,\phi_t)$;
		\item $h_{\mu_f^c}(\phi_t)+\int\psi\mathrm{d}\mu_f^c=c$.
	\end{enumerate} 
\end{maintheorem}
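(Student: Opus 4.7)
The plan is to transfer the problem from the suspension flow to the base SFT via the Abramov correspondence and then mimic the Baire-category construction of Theorem~\ref{mt-1} intrinsically on $\mathcal{M}(X_\rho,\phi_t)$. Recall there is an affine weak$^*$-homeomorphism $\Phi : \mathcal{M}(X,\sigma) \to \mathcal{M}(X_\rho,\phi_t)$, $\nu \mapsto \tilde\nu$, sending ergodic measures to ergodic measures and satisfying Abramov's formula $h_{\tilde\nu}(\phi_t) = h_\nu(\sigma)/\int \rho\, \mathrm{d}\nu$. Since $(X,\sigma)$ is a transitive SFT, it is expansive with the shadowing property, so $h_\cdot(\sigma)$ is upper semicontinuous on $\mathcal{M}(X,\sigma)$ and periodic measures are weak$^*$-dense. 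Because $\nu \mapsto \int \rho\,\mathrm{d}\nu$ is continuous and bounded below by $\min \rho > 0$, these properties transfer through $\Phi$: $h_\cdot(\phi_t)$ is upper semicontinuous on $\mathcal{M}(X_\rho,\phi_t)$ and flow-periodic ergodic measures are weak$^*$-dense there.

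Next I would translate the data. Set $\Psi(x) := \int_0^{\rho(x)} \psi(x,t)\,\mathrm{d}t$ and $\tilde\psi := \Psi - c\rho \in C(X)$; a direct computation shows $h_{\tilde\nu}(\phi_t) + \int \psi\,\mathrm{d}\tilde\nu \geq c$ if and only if $h_\nu(\sigma) + \int \tilde\psi\,\mathrm{d}\nu \geq 0$. The Bowen--Ruelle pressure formula for suspension flows translates $c < P(\phi_t,\psi)$ into $P(\sigma,\tilde\psi) > 0$, while $\sup_\mu \int \psi\,\mathrm{d}\mu \leq c$ becomes $\sup_\nu \int \tilde\psi\,\mathrm{d}\nu \leq 0$. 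Consequently the feasible set
$$\mathcal{K} := \bigl\{\mu \in \mathcal{M}(X_\rho,\phi_t) : h_\mu(\phi_t) + \int \psi\,\mathrm{d}\mu \geq c\bigr\}$$
is non-empty, convex, weak$^*$-compact, and has only ergodic extreme points; moreover $\mathcal{K}$ contains measures on which the constraint is strict.

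I would then run the proof of Theorem~\ref{mt-1} directly in $\mathcal{M}(X_\rho,\phi_t)$, where the objective $\mu \mapsto \int f\,\mathrm{d}\mu$ is linear and continuous. Define $\mathcal{R} := \bigcap_{n\in\mathbb{N}^+} \mathcal{R}_{1/n}$, where $\mathcal{R}_\varepsilon \subset C(X_\rho)$ is the set of $f$ whose $\mathcal{K}$-maximizer set has weak$^*$-diameter less than $\varepsilon$. Openness of $\mathcal{R}_\varepsilon$ follows from weak$^*$-compactness of $\mathcal{K}$ and upper semicontinuity of the maximizer correspondence; density is the standard fact that the support function $f \mapsto \max_{\mu \in \mathcal{K}}\int f\,\mathrm{d}\mu$, being a continuous convex function on the separable Banach space $C(X_\rho)$, has a unique maximizer on a residual set of $f$'s. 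For $f \in \mathcal{R}$ uniqueness (1) holds, and since the maximizing face collapses to a single extreme point of $\mathcal{K}$, ergodicity (2) follows. For (3), if $h_{\mu_f^c}(\phi_t) + \int \psi\,\mathrm{d}\mu_f^c > c$, upper semicontinuity together with $P(\sigma,\tilde\psi) > 0$ provide a nearby flow-periodic measure $\mu_1 \in \mathcal{K}$; then a small convex combination of $\mu_f^c$ with $\mu_1$ stays inside $\mathcal{K}$ while strictly increasing $\int f\,\mathrm{d}\mu$ (after a generic perturbation of $f$), contradicting maximality, so equality must hold.

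The main obstacle is the density step for $\mathcal{R}_\varepsilon$ combined with the entropy-equality (3). The natural temptation is to pull the whole problem back to $\mathcal{M}(X,\sigma)$ and invoke Theorem~\ref{mt-1} as a black box, but this replaces the linear objective by the nonlinear ratio $\nu \mapsto \int F\,\mathrm{d}\nu / \int \rho\,\mathrm{d}\nu$ with $F(x) := \int_0^{\rho(x)} f(x,t)\,\mathrm{d}t$, which is not of the form addressed in Theorem~\ref{mt-1}. The point of working intrinsically on $(X_\rho,\phi_t)$ is that Abramov only needs to be used to transfer \emph{structural} data (upper semicontinuity of entropy, density of ergodic periodic measures, the Bowen--Ruelle translation of the pressure condition); once these are in place, the convex-analytic and perturbation arguments of Theorem~\ref{mt-1}'s proof go through verbatim on the linear objective $\mu \mapsto \int f\,\mathrm{d}\mu$, yielding (1)--(3) without requiring shadowing for the flow itself.
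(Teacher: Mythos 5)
Your high-level plan (work intrinsically on the suspension space and use the Abramov correspondence $\Phi$ only to transfer structural facts) is the right one, and the translation $\tilde\psi = \Psi - c\rho$ together with the Bowen--Ruelle pressure formula is correct. But the convex-analytic step that is supposed to deliver conclusions (2) and (3) does not work as stated. You argue that for generic $f$ the maximizing set collapses to a single extreme point of $\mathcal{K}=\Lambda_c$, and that ergodicity follows because the maximizer is an extreme point. That inference is false: $\Lambda_c$ is the intersection of the Choquet simplex $\mathcal{M}(X_\rho,\phi_t)$ with the sublevel-type convex set $\{h_\mu+\int\psi\,\mathrm{d}\mu\geq c\}$, and this intersection has many extreme points that are not ergodic (for instance, a convex combination $\lambda\nu_1+(1-\lambda)\nu_2$ of two ergodic measures with $h_{\nu_1}+\int\psi\,\mathrm{d}\nu_1>c>h_{\nu_2}+\int\psi\,\mathrm{d}\nu_2$ can land on the boundary $h+\int\psi=c$ and be extreme in $\Lambda_c$ without being ergodic). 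Likewise, the final perturbation sketch for (3) — ``a small convex combination with a nearby periodic measure strictly increases $\int f$ after a generic perturbation of $f$'' — is not formulated as a Baire-category statement about a set of $f$'s and cannot be, in the form given, closed into a residual set argument.

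What is actually needed, and what the paper does, is the machinery of Lemma~\ref{lem3.4}--\ref{lem3.7} (Bishop--Phelps plus Morris's Lemma~\ref{lem3.5}), which shows that for a residual set of $f$ the unique maximizer lies not just in $\mathrm{ext}(\Lambda_c)$ but in the much smaller set $\Lambda_c^\star$ of ergodic measures that are ``weak-face'' points of $\Lambda_c$. The crucial input that makes this work is Lemma~\ref{lemma 3.1}: the refined entropy-dense property (not mere density of periodic measures, which you invoke) implies that $\Delta_c\cap\cM^e$ is residual in $\Lambda_c$, and one observes $\Delta_c\cap\cM^e\subset\Lambda_c^\star$. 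Combined with Corollary~\ref{corollary-1}(1) (or its flow version Theorem~\ref{th5.2}), this delivers (2) and (3) simultaneously: generically the maximizer sits inside the residual set $\Delta_c\cap\cM^e$. To run this for the suspension, you still need to transfer the refined entropy-dense property from the base SFT to $(X_\rho,\phi_t)$ via Abramov (this is Lemma~\ref{l5.1}); your proposal transfers only upper semicontinuity of entropy and density of periodic measures, which is not enough to establish residuality of $\Delta_c\cap\cM^e$ in $\Lambda_c$.
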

Let $\mathscr{X}^r(M)$, $r\geq1$, denote the space of $C^r$-vector fields on a compact Riemannian manifold $M$.
\begin{maintheorem}\label{mt-3}
	There exists a residual subset $\mathcal{A}^r\subset\mathscr{X}^r(M^3)$ where $r\geq2$ such that if $\Lambda$ is a geometric Lorenz attractor of $X\in\mathcal{A}^r$, $\varphi\in C(\Lambda)$ and $\sup\limits_{\mu\in\cM_{inv}(\Lambda)}\int\varphi\mathrm{d\mu}\leq c<P(T,\varphi)$, then there is a residual subset $\mathcal{R}$ of $C(\Lambda)$, such that for any $f\in\mathcal{R}$, there is a unique $\mu_f^c\in\cM_{inv}(\Lambda)$ such that
	\begin{enumerate}[(1)]
		\item $\int f\mathrm{d}{\mu_f^c}=\sup\{\int f\mathrm{d\mu}\mid h_\mu(\Lambda)+\int\varphi\mathrm{d\mu}\geq c\}$;
		\item $\mu_f^c\in\cM_{erg}(\Lambda)$;
		\item $h_{\mu_f^c}(\Lambda)+\int\varphi\mathrm{d}\mu_f^c=c$.
	\end{enumerate}	 
\end{maintheorem}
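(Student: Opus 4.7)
The plan is to reduce Theorem \ref{mt-3} to Theorem \ref{mt-2} by coding a generic geometric Lorenz attractor as a suspension over a transitive subshift of finite type. Recall that a geometric Lorenz attractor $\Lambda$ carries a two-dimensional cross-section $\Sigma$ transverse to the flow $(X_t)$, and the associated first-return Poincar\'e map $P:\Sigma\setminus\Gamma\to\Sigma$ preserves a one-dimensional contracting stable foliation that is $C^{1+\alpha}$ under the assumption $r\geq 2$. Quotienting by this foliation produces a one-dimensional Lorenz-like expanding map $f:I\to I$ with a single discontinuity. I would take the residual set $\mathcal{A}^r\subset\mathscr{X}^r(M^3)$ to be (a subset of) those vector fields for which the quotient map $f$ admits a finite Markov partition and is topologically mixing; both are $C^r$-generic conditions among vector fields carrying a geometric Lorenz attractor. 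Combined with a careful choice of Markov partition on $\Sigma$ that stays uniformly away from the local stable set of the singularity, this yields a flow semi-conjugacy $\pi:(X_\rho,\phi_t)\to(\Lambda,X_t)$, where $(X,\sigma)$ is a transitive two-sided subshift of finite type and $\rho:X\to(0,+\infty)$ is a continuous roof function obtained from the return time (which is bounded away from $0$ automatically, and bounded above by the choice of partition). By standard arguments, $\pi$ then induces a bijection $\cM(X_\rho,\phi_t)\to\cM_{inv}(\Lambda)$ that preserves ergodicity, integrals of continuous observables, and metric entropy.

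Using this reduction, I would apply Theorem \ref{mt-2} to $(X_\rho,\phi_t)$ with the pulled-back potential $\psi:=\varphi\circ\pi$ and the same constant $c$, obtaining a residual subset $\mathcal{R}_0\subset C(X_\rho)$ such that every $g\in\mathcal{R}_0$ admits a unique, ergodic, restricted-maximizing measure $\nu_g^c$ with $h_{\nu_g^c}(\phi_t)+\int\psi\,\mathrm{d}\nu_g^c=c$. Because the map $C(\Lambda)\to C(X_\rho)$, $f\mapsto f\circ\pi$, is a linear isometric embedding, the preimage $\mathcal{R}:=\{f\in C(\Lambda)\mid f\circ\pi\in\mathcal{R}_0\}$ is residual in $C(\Lambda)$. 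For $f\in\mathcal{R}$, define $\mu_f^c:=\pi_\ast\nu_{f\circ\pi}^c$; the entropy- and integral-preserving nature of $\pi$, together with uniqueness on the symbolic side, yields conclusions (1)--(3) of Theorem \ref{mt-3}. Note that we do not need an analogue of property (3) from Theorem \ref{mt-1} (full support) here, since the semi-conjugacy is not in general surjective at the topological level, only measure-theoretically.

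The main obstacle is the coding step. Three points require care: first, the roof function must be continuous and bounded, which forces one to avoid the local stable manifold of the singularity in choosing the Markov partition of $\Sigma$; second, the semi-conjugacy $\pi$ must be one-to-one on a set of full measure for every invariant measure of the suspension, so that metric entropy and the integral constraints transfer cleanly across $\pi$; third, the transitivity (indeed, topological mixing) of the coded SFT must be secured by the genericity hypothesis on $\mathcal{A}^r$. The first two ingredients are handled by the classical geometric construction of the Lorenz attractor together with $C^{1+\alpha}$-regularity of the stable foliation, whereas the third relies on known $C^r$-genericity results for piecewise expanding one-dimensional maps. Once the symbolic representation is in place, the ergodic-optimization statement becomes a direct consequence of Theorem \ref{mt-2} via the pull-back argument described above.
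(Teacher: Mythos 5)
Your proposal takes a genuinely different route from the paper, but it has a fundamental gap. You try to code the \emph{entire} geometric Lorenz attractor as a suspension flow over a transitive two-sided SFT with a continuous (hence bounded) roof function, and then invoke Theorem~\ref{mt-2} by pull-back. This cannot work: a geometric Lorenz attractor contains a hyperbolic singularity, and the first-return time to any cross-section transverse to the flow is \emph{unbounded}, blowing up logarithmically along the local stable manifold of the singularity. In Theorem~\ref{mt-2} the base $X$ is a compact SFT and $\rho\in C(X)$ is therefore bounded, so the suspension space $X_\rho$ is compact and the induced flow has return times bounded by $\max\rho$. No such suspension can semi-conjugate \emph{onto} the attractor, and in particular the claimed bijection $\cM(X_\rho,\phi_t)\to\cM_{inv}(\Lambda)$ fails: the Dirac mass at the singularity lies in $\cM_{inv}(\Lambda)$ but has no preimage, and more broadly any invariant measure giving weight to a neighbourhood of the stable manifold of the singularity is lost. ``Cutting out'' a neighbourhood of the stable set to keep $\rho$ bounded, as you propose, makes the coded set a proper compact subset of $\Lambda$, so surjectivity of $\pi_\ast$ is false. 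A secondary issue is your choice of $\mathcal{A}^r$: requiring the quotient one-dimensional Lorenz map to admit a \emph{finite} Markov partition is \emph{not} a $C^r$-residual condition (it forces the forward orbit of the critical/discontinuity value to be eventually periodic, a countable-codimension condition that is dense but not generic).

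The paper avoids these problems by never coding the whole attractor. Instead it works with the general flow version of the abstract machinery (Theorem~\ref{th5.2}), for which the only required input is that $\Lambda_c^\star$ be residual in $\Lambda_c$; this in turn reduces, via Lemma~\ref{lemma 3.1}, to verifying that the attractor has the refined entropy-dense property and upper semicontinuous entropy map. Upper semicontinuity comes from robust entropy expansiveness (Lemma~\ref{l5.4}). Refined entropy density is established by approximating a given (ergodic, non-singular) invariant measure by measures supported on a \emph{horseshoe inside $\Lambda$} (Lemma~\ref{l5.3}, using the generic homoclinic-relation structure from Crovisier--Yang). These horseshoes are compact invariant sets disjoint from the singularity, so each of them \emph{is} conjugate to a suspension over an SFT with bounded roof (Lemma~\ref{l5.1}), and they carry ergodic measures with the prescribed intermediate entropy. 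Thus the suspension-over-SFT picture enters only locally, on hyperbolic sub-pieces, together with a weak$^\ast$ approximation argument; the singular measure is handled by approximation rather than by coding. If you want to salvage your line of argument, you would need to replace ``code the attractor by a finite SFT'' with this local-horseshoe approximation scheme, at which point you are essentially reproducing the paper's proof.
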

\begin{maintheorem}\label{mt-4}
	There exists a residual subset $\mathcal{A}\subset\mathscr{X}^1(M)$ such that if $\Lambda$ is a non-trivial singular hyperbolic attractor of $X\in\mathcal{A}$, $\varphi\in C(\Lambda)$ and $\sup\limits_{\mu\in\cM_{inv}(\Lambda)}\int\varphi\mathrm{d\mu}\leq c<P(T,\varphi)$, then there is a residual subset $\mathcal{R}$ of $C(\Lambda)$, such that for any $f\in\mathcal{R}$, there is a unique $\mu_f^c\in\cM_{inv}(\Lambda)$ such that
	\begin{enumerate}[(1)]
		\item $\int f\mathrm{d}{\mu_f^c}=\sup\{\int f\mathrm{d\mu}\mid h_\mu(\Lambda)+\int\varphi\mathrm{d\mu}\geq c\}$;
		\item $\mu_f^c\in\cM_{erg}(\Lambda)$;
		\item $h_{\mu_f^c}(\Lambda)+\int\varphi\mathrm{d}\mu_f^c=c$.
	\end{enumerate}	 
\end{maintheorem}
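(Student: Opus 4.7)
The plan is to run the proof of Theorem \ref{mt-1} essentially verbatim, after replacing the shadowing hypothesis by the abstract consequences that the proof actually uses and after establishing those consequences on a suitable residual $\mathcal{A}\subset\mathscr{X}^1(M)$. The three abstract facts needed are: (i) $X|_\Lambda$ is transitive; (ii) the map $\mu\mapsto h_\mu(\Lambda)$ is upper semicontinuous on $\cM_{inv}(\Lambda)$; and (iii) ergodic (in fact periodic) measures are dense in $\cM_{inv}(\Lambda)$. For a non-trivial singular hyperbolic attractor of a $C^1$-generic vector field, (i) is built into the definition of ``non-trivial'', (ii) follows from recent results on entropy of singular hyperbolic flows via symbolic coding, and (iii) follows from Crovisier-style ergodic closing lemmas adapted to the singular hyperbolic setting. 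I would define $\mathcal{A}$ as the intersection of these three residual sets.

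Fix $X\in\mathcal{A}$ and set $\cM_c:=\{\mu\in\cM_{inv}(\Lambda):h_\mu(\Lambda)+\int\varphi\,d\mu\geq c\}$. By (ii) and weak-$\ast$ continuity of integration, $\cM_c$ is compact; it is convex since $h$ and $\int\varphi\,d\cdot$ are affine, and nonempty because $c<P(X,\varphi)$ via the variational principle. The support functional $\beta_c(f):=\sup_{\mu\in\cM_c}\int f\,d\mu$ is convex and $1$-Lipschitz on $C(\Lambda)$, so a standard Mazur-type argument, exactly as in the proof of Theorem \ref{mt-1}, furnishes a residual $\mathcal{R}_1\subset C(\Lambda)$ on which the maximizing set $\cM_c(f):=\{\mu\in\cM_c:\int f\,d\mu=\beta_c(f)\}$ is a singleton $\{\mu_f^c\}$. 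This produces conclusion (1).

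For (3), the hypothesis $\sup_{\mu}\int\varphi\,d\mu\leq c$ combined with a Contreras-type residual set on which the unconstrained maximizer of $\int f\,d\mu$ is attained at a very low-entropy measure (enabled by (iii) in the same way that shadowing is used in Theorem \ref{mt-1}) rules out generic maximizers with $h_{\mu_f^c}+\int\varphi\,d\mu_f^c>c$: otherwise $\mu_f^c$ would also be the unconstrained optimizer, but then $h_{\mu_f^c}+\int\varphi\,d\mu_f^c\leq 0+\sup\int\varphi\,d\mu\leq c$, a contradiction. This gives a residual $\mathcal{R}_3\subset\mathcal{R}_1$. For (2), ergodic measures lying in the boundary slice $\{\mu\in\cM_c:h_\mu+\int\varphi\,d\mu=c\}$ are dense there by (ii) together with (iii), and the uniqueness from $\mathcal{R}_1$ together with activity of the constraint from $\mathcal{R}_3$ forces $\mu_f^c$ itself to be ergodic on a further residual $\mathcal{R}_2$, by a perturbation argument of $f$ that pushes the unique maximizer toward the ergodic approximants. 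The desired residual set is $\mathcal{R}:=\mathcal{R}_1\cap\mathcal{R}_2\cap\mathcal{R}_3$.

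The main obstacle is the ergodicity step $\mathcal{R}_2$. The difficulty is that the boundary slice $\{\mu:h_\mu+\int\varphi\,d\mu=c\}$ is \emph{not} convex inside $\cM_c$—convex combinations in $\cM_c$ only preserve the inequality $\geq c$, not the equality—so extreme points of $\cM_c$ that sit on the boundary are not automatically ergodic. The remedy, which mirrors the Baire-category argument used in Theorem \ref{mt-1}, is to combine density from (iii), upper semicontinuity from (ii), and the active-constraint information from $\mathcal{R}_3$ to exhibit, for every nonergodic candidate, an open perturbation of $f$ that destroys maximality, forcing ergodicity on a dense $G_\delta$.
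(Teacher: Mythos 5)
There is a genuine gap. Your three ingredients (i)--(iii) are not the ones the proof actually needs, and the weakest link is (iii). Density of ergodic (or periodic) measures in $\cM_{inv}(\Lambda)$ is strictly weaker than what Theorem~\ref{mt-1}'s proof uses, which is the \emph{refined entropy-dense property}: for every $\mu$ and every $0\leq c\leq h_\mu$, one can find ergodic measures simultaneously close to $\mu$ \emph{and} with entropy converging to $c$. This double control is exactly what makes Lemma~\ref{lemma 3.1} work --- it is how one shows that $\Delta_c\cap\cM^e$ (ergodic measures sitting exactly on the boundary slice $h_\mu+\int\varphi\,d\mu=c$) is residual in $\Lambda_c$, so that $\Lambda_c^\star$ is residual and Theorem~\ref{th5.2}/Corollary~\ref{corollary-1} can be invoked. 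Mere density of ergodic measures gives you no handle on entropy, so your $\mathcal{R}_2$ and $\mathcal{R}_3$ do not follow. For singular hyperbolic attractors, establishing the refined entropy-dense property is the real work, and the paper does it via a specific chain: $C^1$-generically the attractor is a robustly transitive homoclinic class with all periodic orbits homoclinically related (Crovisier--Yang), one has entropy-density and a horseshoe approximation lemma (Shi--Tian--Varandas--Wang), and horseshoes --- being conjugate to suspension flows over transitive SFTs --- inherit refined entropy-density via Lemma~\ref{l5.1}. None of this is a ``Crovisier-style ergodic closing lemma'' in the sense you invoke.

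Your mechanism for conclusion (3) is also not the paper's and does not actually work. You argue that if the constraint were inactive then $\mu_f^c$ would be the unconstrained maximizer and ``then $h_{\mu_f^c}+\int\varphi\,d\mu_f^c\leq 0+\sup\int\varphi\,d\mu\leq c$''. This presupposes the unconstrained maximizer has zero entropy, which is a Contreras-type statement proved only for Lipschitz/H\"older functions on expanding maps --- it is not available for $C^0$ observables on singular hyperbolic flows, and even if it were, it would give an inequality about the \emph{unconstrained} maximizer, not the constrained one. The paper's actual argument is structural and does not pass through any low-entropy property of unconstrained optimizers: once $\Delta_c\cap\cM^e$ is shown to be residual in $\Lambda_c$, items (2) and (3) fall out simultaneously from the abstract Baire-category machinery (the flow analogue of Corollary~\ref{corollary-1}(1)), applied with $\mathcal{L}=\Delta_c\cap\cM^e$. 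You correctly flag that non-convexity of the boundary slice makes (2) nontrivial, but the resolution you sketch (``an open perturbation of $f$ that destroys maximality'') is left as a wave of the hand, whereas the paper's resolution is precisely the Bishop--Phelps / Morris lemma pipeline (Lemmas~\ref{lem3.2}--\ref{lem3.7}) packaged into Theorem~\ref{theorem-1} and Corollary~\ref{corollary-1}, together with upper semicontinuity of entropy (from \cite{PYY2021}) to make the level sets $\Lambda_c(k)$ relatively open.
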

The rest of this paper is organized as follows. In section 2, we will introduce some preliminary results on shadowing property, metric entropy, topological entropy, topological pressure, physical-like meaures, Pesin entropy formula and ergodic optimization. In section 3, we will introduce a generalized version of Theorem \ref{mt-1} and prove it. In section 4, we will prove Theorem \ref{mt-1}, generalize it to asymptotically additive sequences and give some other examples. In section 5, we generalize Corollary \ref{corollary-3} and Corollary \ref{corollary-1} to flows and prove Theorem \ref{mt-2}, \ref{mt-3} and \ref{mt-4}. In section 6, we put some questions. In appendix A, we prove Theorem \ref{the1.1} and \ref{the1.2}.
\section{Preliminaries}
\subsection{Shadowing property} An infinite sequence $\{x_n\}_{n=0}^{+\infty}$ of $X$ is a $\delta$\emph{-pseudo-orbit} if $d(T(x_n),x_{n+1})<\delta$ for each $n\leq\mathbb{N}$. We say that a dynamical system has the \emph{shadowing property} if for any $\varepsilon>0$, there is $\delta>0$ such that for any $\delta$-pseudo-orbit $\{x_n\}_{n=0}^{+\infty}$ can be $\varepsilon$\emph{-traced} by a point $y\in X$, that is $d(T^ny,x_n)<\varepsilon$ for all $n\in\mathbb{N}$. 
\subsection{Metric entropy}
Given $\mu\in\cM(X,T)$, suppose that $\xi=\{A_1,A_2,\cdots,A_k\}$ is a finite partition of $(X,\mathcal{B}(X),\mu)$. Denote $H_{\mu}(\xi):=-\sum\limits_{i=1}^{k}\mu(A_i)\log\mu(A_i)$. If $\eta=\{B_1,B_2,\cdots,B_r\}$ is also a finite partition, denote $\xi\vee\eta:=\{A_i\cap B_j\mid1\leq i\leq k, 1\leq j\leq r\}$ and $\bigvee\limits_{i=0}^{n-1}T^{-i}\xi:=\xi\vee T^{-1}\xi\vee\cdots\vee T^{n-1}\xi$ for any $n\geq1$. Then the \emph{metric entropy} of $\mu$ is defined as $$h_\mu(T):=\sup_{\xi}\lim_{n\to=\infty}\frac{1}{n}H_{\mu}(\bigvee\limits_{i=0}^{n-1}T^{-i}\xi).$$ And the affine function $\mu\mapsto h_\mu(T)$ from $\cM(X,T)$ to $[0,+\infty)\cup\{+\infty\}$ is said to be the \emph{entropy function}.
\begin{definition}
	Suppose that $(X,T)$ is a dynamical system, we say that $(X,T)$ has entropy-dense property if for any $\varepsilon>0$, any $\mu\in\cM(X,T)$, there exists $\nu\in\cM^e(X,T)$ such that $\varrho(\mu,\nu)<\varepsilon$ and $h_\nu(T)>h_\mu(T)-\varepsilon$.
\end{definition}
\subsection{Topological entropy}
Given $n\in\mathbb{N^{+}}$ and $\varepsilon>0$. Define $d_n(x,y):=\max\limits_{0\leq i\leq n-1}d(T^ix,T^iy)$ for any $x,y\in X$. A subset $E\subset X$ is said to be $(n,\varepsilon)$-separated if for any $x\neq y\in E$, $d_n(x,y)>\varepsilon$. Define $$s_n(\varepsilon):=\sup\{\#E\mid E\text{ is }(n,\varepsilon)\text{-separated}\}.$$ Then the \emph{topological entropy} of $(X,T)$ is defined as $$h_{top}(T):=\lim_{\varepsilon\to0}\limsup_{n\to+\infty}\frac{1}{n}\log s_n(\varepsilon).$$ The variational principle for topological entropy is $$h_{top}(T)=\sup\limits_{\mu\in\cM(X,T)}h_\mu(T).$$
\begin{definition}
	Suppose that $(X,T)$ is a dynamical system, we say that $(X,T)$ has intermediate entropy property if for any $0\leq c<h_{top}(T)$, there exists $\mu\in\cM^e(X,T)$ such that $h_\mu(T)=c$.
\end{definition} 
\subsection{Topological pressure}
Given $\varphi\in C(X)$, denote $S_n\varphi(x):=\sum\limits_{i=0}^{n-1}\varphi(T^ix)$ for any $x\in X$ and $n\in\mathbb{N^{+}}$. Given $n\in\mathbb{N^{+}}$ and $\varepsilon>0$, define $$P_n(T,\varphi,\varepsilon):=\sup\left\{\sum_{x\in E}e^{S_n\varphi(x)}\mid E\text{ is }(n,\varepsilon)\text{-separated}\right\}.$$ Then the \emph{topological pressure} of $(X,T)$ is defined as $$P(T,\varphi):=\lim_{\varepsilon\to0}\limsup_{n\to+\infty}\frac{1}{n}\log P_n(T,\varphi,\varepsilon).$$ The variational principle for topological pressure is $$P(T,\varphi)=\sup\limits_{\mu\in\cM(X,T)}\left\{h_\mu(T)+\int\varphi\mathrm{d\mu}\right\}.$$
\subsection{Physical-like measures}
Given $x\in X$, define the \emph{p-omega-limit set} $p\omega_T(x)\subset\cM(X)$ as $$p\omega_T(x):=\left\{\mu\in\cM(X)\mid\exists n_i\to+\infty\text{ such that }\lim\limits_{i\to+\infty}\frac{1}{n_i}\sum_{j=0}^{n_i-1}\delta_{T^j(x)}=\mu\right\},$$ where $\delta_y$ is the Dirac probability measure supported at $y\in X$. It can be checked that $p\omega_T(x)\subset\cM(X,T)$.

Now, suppose that $X$ is a compact manifold. A metric $\varrho$ that induces the weak$^\ast$ topology on $\cM(X)$. Then a probability measure $\mu\in\cM(X)$ is said to be \emph{physical-like} if for any $\varepsilon>0$, the set $$G_\mu(\varepsilon)=\{x\in X\mid\varrho(p\omega_T(x),\mu)<\varepsilon\},$$ has positive Lebesgue measure. Denote $\mathcal{O}_T$ the set of physical-like measures for $T$. By \cite[Theorem 1.3]{Ele-Heb2011}, $\mathcal{O}_T$ is nonempty and weak$^\ast$ compact in $\cM(X)$.
\subsection{Pesin entropy formula}
Suppose that $T$ is a $C^1$ diffeomorphism and $X$ is a compact Riemannian manifold. Given $\mu\in\cM(X,T)$, we say that $\mu$ satisfies \emph{Pesin entropy formula} if $$h_\mu(T)=\int\sum_{\chi_i(x)\geq0}\chi_i(x)\mathrm{d\mu},$$ where $\chi_1(x)\geq\chi_2(x)\geq\cdots\geq\chi_{\dim(X)}(x)$ denote the Lyapunov exponents of $\mu$-a.e. $x\in X$. Denote $$PE_T:=\{\mu\in\cM(X,T)\mid\mu\text{ satisfies Pesin entropy formula}\}.$$ Due to Rulle's inequality\cite{Ruelle1978} and the affinity property of entropy function, $PE_T$ is a \emph{weak face} of $\cM(X,T)$, which means that for any $\mu\in PE_T$ with $\mu=\lambda\nu+(1-\lambda)\omega$ for some $0<\lambda<1$ and $\nu,\omega\in\cM(X,T)$, one has $\nu,\omega\in PE_T$.
\begin{theorem}\cite[Lemma 3.7]{CTV2019}\label{the2.1}
	Suppose that $T$ is $C^1$ transitive Anosov diffeomorphism and $X$ is a compact Riemannian manifold. Then $\mathcal{O}_T\subset PE_T\subsetneq\cM(X,T)$.
\end{theorem}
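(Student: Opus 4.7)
The argument splits naturally into two parts. The properness $PE_T\subsetneq\cM(X,T)$ is essentially immediate: the transitive Anosov assumption, together with the Anosov closing lemma applied to any recurrent point, produces a hyperbolic periodic point $p$. The Dirac measure $\delta_p$ has $h_{\delta_p}(T)=0$, while uniform hyperbolicity of $T$ forces the sum of its positive Lyapunov exponents to be strictly positive. Hence $\delta_p\in\cM(X,T)\setminus PE_T$.

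For the inclusion $\mathcal{O}_T\subset PE_T$, fix $\mu\in\mathcal{O}_T$. Ruelle's inequality already provides
\[
h_\mu(T)\le\int\sum_{\chi_i(x)\ge0}\chi_i(x)\,\mathrm{d}\mu,
\]
so only the reverse bound requires work. The plan is to exploit the hypothesis that $\mu$ is physical-like: for every $\varepsilon>0$ the set $G_\mu(\varepsilon)$ has positive Lebesgue measure, so along a subsequence $n_k\to\infty$ a uniformly positive Lebesgue-measure portion of $G_\mu(\varepsilon)$ consists of points $x$ whose empirical measures $\frac{1}{n_k}\sum_{j=0}^{n_k-1}\delta_{T^j x}$ are $\varepsilon$-close to $\mu$ in $\varrho$. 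Covering such a set by disjoint Bowen $(n_k,\rho)$-balls of small radius $\rho$, one estimates the volume of each Bowen ball from above by $\exp\bigl(-n_k\int\log|\det DT|_{E^u}|\,\mathrm{d}\mu+o(n_k)\bigr)$ using the continuity of the unstable bundle $E^u$ on $X$ and the closeness of the empirical measures to $\mu$. Counting the balls forces
\[
h_\mu(T)\ge\int\log|\det DT|_{E^u}|\,\mathrm{d}\mu-\eta(\varepsilon),
\]
and since for uniformly hyperbolic $E^u$ the Oseledec theorem identifies $\int\log|\det DT|_{E^u}|\,\mathrm{d}\mu$ with $\int\sum_{\chi_i\ge0}\chi_i\,\mathrm{d}\mu$, letting $\varepsilon\to 0$ completes the Pesin formula.

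The main obstacle is making the volume/Bowen-ball count rigorous in the $C^1$ category, where absolute continuity of the unstable foliation for arbitrary invariant measures is not available and classical Pesin theory cannot be invoked verbatim. The saving feature is that for an Anosov diffeomorphism the hyperbolic splitting and the unstable Jacobian $\log|\det DT|_{E^u}|$ are \emph{continuous} functions on all of $X$, not merely on a Pesin regular set, which reduces the volume estimate to a uniform statement. A Pliss-type selection along the subsequence $n_k$ then transfers the pointwise weak$^\ast$ convergence into the uniform exponential estimate needed above. Combined with the upper semi-continuity of $\mu\mapsto\int\sum_{\chi_i\ge0}\chi_i\,\mathrm{d}\mu$ under weak$^\ast$ limits on uniformly hyperbolic sets, this yields the inclusion $\mathcal{O}_T\subset PE_T$.
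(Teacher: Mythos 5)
The paper does not actually prove this statement; Theorem~\ref{the2.1} is imported verbatim from \cite[Lemma 3.7]{CTV2019} and used as a black box, so there is no in-paper argument to compare against. I will therefore assess your proposal on its own merits.

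The half $PE_T\subsetneq\cM(X,T)$ is fine: the Dirac measure on a hyperbolic periodic orbit has zero entropy but strictly positive $\sum\chi_i^+$, so it violates Pesin's formula. For $\mathcal{O}_T\subset PE_T$ you also correctly reduce, via Ruelle's inequality, to the lower bound $h_\mu(T)\ge\int\log|\det DT|_{E^u}|\,\mathrm{d}\mu$, and you correctly note that in the Anosov case $\log|\det DT|_{E^u}|$ is globally continuous, so Birkhoff averages along orbits whose empirical measures are $\varepsilon$-close to $\mu$ approximate $\int\log|\det DT|_{E^u}|\,\mathrm{d}\mu$. The gap is the sentence ``Counting the balls forces $h_\mu(T)\ge\dots$''. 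A lower bound on the cardinality of an $(n,\rho)$-separated set inside a positive-Lebesgue-measure subset of $G_\mu(\varepsilon)$ controls a topological-entropy-type growth rate of that set; it does not, by itself, bound the metric entropy of $\mu$, which may be singular with respect to Lebesgue. Converting the cardinality estimate into $h_\mu(T)\ge\dots$ is exactly the nontrivial content of the Catsigeras--Cerminara--Enrich argument underlying \cite[Lemma 3.7]{CTV2019}: one must build from the separated sets a sequence of atomic measures converging to $\mu$, construct an adapted partition, and show the entropy defect is controlled (a Misiurewicz/Katok-type argument). Your appeal to a ``Pliss-type selection'' does not address this, since for Anosov diffeomorphisms hyperbolicity is already uniform and the obstruction is measure-theoretic, not about choosing good hyperbolic times. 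The closing appeal to ``upper semi-continuity of $\mu\mapsto\int\sum\chi_i^+\,\mathrm{d}\mu$'' is also a red herring: for Anosov this functional equals $\mu\mapsto\int\log|\det DT|_{E^u}|\,\mathrm{d}\mu$, which is continuous, and its regularity is not what is missing---the entropy lower bound is.
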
 
\subsection{Ergodic optimization}
For any $f\in C(X)$, we define the \emph{maximum ergodic average} $\beta(f):=\sup\limits_{\mu\in\cM(X,T)}\int f\mathrm{d\mu}.$ And the set of all \emph{maximising measures} $\cM_{max}(f):=\left\{\mu\in\cM(X,T)\mid \int f\mathrm{d\mu}=\beta(f)\right\}.$ The study of the functional $\beta$ and the set $\cM_{max}(f)$ is called \emph{Ergodic optimisation of Birkhoff averages}, see \cite{B2018,J2006,J2006ii,J2019,M2010} for more information.

Now, we concentrate on the ergodic optimization restricted on certain subsets of invariant measures. 
Suppose that $\Lambda$ is a nonempty and compact subset of $\cM(X,T)$, $\varSigma$ is a topological vector space which is densely and continuously embedded in $C(X)$. Similarly, for any $f\in C(X)$, we define 
\[
\begin{split}
	\beta^\Lambda(f)&:=\sup\limits_{\mu\in\Lambda}\int f\mathrm{d\mu},\\
	\cM^\Lambda_{max}(f)&:=\left\{\mu\in\Lambda\mid \int f\mathrm{d\mu}=\beta^\Lambda(f)\right\},\\
	\mathcal{U}^\Lambda_\varSigma&:=\left\{f\in\varSigma\mid \cM^\Lambda_{max}(f) \text{ is a singleton}\right\}.
\end{split}
\]
Since $\mu\to\int f\mathrm{d\mu}$ is a continuous function from $\Lambda$ to $\mathbb{R}$, we have that $\cM^\Lambda_{max}(f)$ is a nonempty and compact subset of $\cM(X,T)$. The proofs of \cite[Proposition 3.1]{J2006} and \cite[Theorem 3.2]{J2006} can be adapted to $\Lambda$, $\beta^\Lambda(f)$, $\cM^\Lambda_{max}(f)$ and $\mathcal{U}^\Lambda_\varSigma$ with very minor modifications. For the convenience of the reader, we include the proofs in Appendix A.
\begin{theorem}\cite[Proposition 3.1]{J2006}\label{the1.1}
	Suppose that $(X,T)$ is a dynamical system and $\Lambda$ is a nonempty and compact subset of $\cM(X,T)$. If $\#\Lambda<+\infty$, then $\mathcal{U}^\Lambda_\varSigma$ is open and dense in $\varSigma$.	
\end{theorem}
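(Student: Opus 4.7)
The plan is to prove openness and density of $\mathcal{U}^\Lambda_\varSigma$ separately, using crucially that $\#\Lambda<+\infty$ so that only finitely many strict inequalities between quantities of the form $\int f\,d\mu_i$ need to be controlled. Write $\Lambda=\{\mu_1,\dots,\mu_k\}$. For openness, fix $f\in\mathcal{U}^\Lambda_\varSigma$ with unique maximiser $\mu_{i_0}$ and set $\delta:=\min_{i\neq i_0}\bigl(\int f\,d\mu_{i_0}-\int f\,d\mu_i\bigr)>0$. Each map $g\mapsto\int g\,d\mu_i$ has operator norm $1$ on $C(X)$, and $\varSigma$ is continuously embedded in $C(X)$, so any $g$ in a suitable $\varSigma$-neighbourhood of $f$ satisfies $|\int(g-f)\,d\mu_i|<\delta/2$ for every $i\in\{1,\dots,k\}$. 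Then $\mu_{i_0}$ remains the unique maximiser of $\int g\,d\mu$ over $\Lambda$, so the whole neighbourhood lies in $\mathcal{U}^\Lambda_\varSigma$, giving openness.

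For density, fix $f\in\varSigma$ and an open neighbourhood $V$ of $f$ in $\varSigma$. Set $I:=\{i\in\{1,\dots,k\}:\int f\,d\mu_i=\beta^\Lambda(f)\}$; if $|I|=1$ there is nothing to prove. Otherwise, since the $\mu_i$ are pairwise distinct elements of the dual of $C(X)$, each hyperplane $W_{ij}:=\{h\in C(X):\int h\,d\mu_i=\int h\,d\mu_j\}$ with $i\neq j$ is proper and closed, so $C(X)\setminus\bigcup_{i\neq j}W_{ij}$ is open and dense. The density of $\varSigma$ in $C(X)$ then furnishes some $h\in\varSigma$ for which the values $\int h\,d\mu_i$, $i\in I$, are pairwise distinct; let $i^\ast\in I$ be the index maximising these values over $I$. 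Because $\varSigma$ is a topological vector space, I can choose $t>0$ small enough that $th\in V-f$ and simultaneously $2t\|h\|<\min_{j\notin I}\bigl(\beta^\Lambda(f)-\int f\,d\mu_j\bigr)$. The first condition places $g:=f+th$ inside $V$; the distinctness breaks the tie inside $I$ in favour of $\mu_{i^\ast}$; and the second condition keeps every $\mu_j$ with $j\notin I$ strictly below $\mu_{i^\ast}$ after the perturbation. Hence $g\in V\cap\mathcal{U}^\Lambda_\varSigma$, proving density.

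The only step that is not pure bookkeeping is producing the perturbation $h$ inside $\varSigma$ rather than merely inside $C(X)$, while keeping $th\in V-f$ in the $\varSigma$-topology, which may be strictly finer than the sup-norm topology. Both points are handled precisely by the two standing hypotheses on $\varSigma$, namely density in $C(X)$ together with being a topological vector space continuously embedded in $C(X)$. Once this is secured the argument reduces to a finite-dimensional linear perturbation, and no dynamical information beyond the Riesz representation of measures as continuous functionals on $C(X)$ is ever used, which is exactly why the finiteness of $\Lambda$ is the decisive assumption here and why the general (infinite $\Lambda$) case treated later in the paper requires genuinely different ideas.
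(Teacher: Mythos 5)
Your openness argument coincides with the paper's: both fix $f$ with unique maximiser, use continuity of the finitely many functionals $g\mapsto\int g\,d\mu_i$ on $\varSigma$, and intersect finitely many open sets preserving the strict inequalities. Where you part ways is on density. The paper proves density by simply invoking Theorem \ref{the1.2} (the countable-intersection $G_\delta$ result for general compact $\Lambda$), whose own proof runs through the Hausdorff-metric perturbation lemma (Lemma \ref{lem5.1}) and the diameter sets $E^\Lambda_{i,j}$. You instead give a self-contained finite-dimensional tie-breaking argument: the $k$ distinct measures in $\Lambda$ determine finitely many proper closed hyperplanes $W_{ij}\subset C(X)$, so $C(X)\setminus\bigcup W_{ij}$ is open and dense, and density of $\varSigma$ in $C(X)$ produces a separating $h\in\varSigma$; scaling $h$ small in the $\varSigma$-topology (continuity of scalar multiplication) while keeping $2t\|h\|$ below the slack $\min_{j\notin I}(\beta^\Lambda(f)-\int f\,d\mu_j)$ breaks the tie inside $I$ without disturbing the $j\notin I$ comparisons. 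Your route is more elementary and makes transparent exactly why finiteness helps (only finitely many linear constraints to break), whereas the paper's route is shorter once Theorem \ref{the1.2} is already in hand and avoids duplicating effort. Both are correct; yours has pedagogical value in isolating the finite case from the heavier general machinery.
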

\begin{theorem}\cite[Theorem 3.2]{J2006}\label{the1.2}
	Suppose that $(X,T)$ is a dynamical system and $\Lambda$ is a nonempty and compact subset of $\cM(X,T)$. Then, $\mathcal{U}^\Lambda_\varSigma$ is a countable intersection of open and dense subsets of $\varSigma$. If moreover $\varSigma$ is a Baire space, then $\mathcal{U}^\Lambda_\varSigma$ is a dense $G_\delta$ subset of $\varSigma$.
\end{theorem}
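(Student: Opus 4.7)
The plan is to exhibit $\mathcal{U}^\Lambda_\varSigma$ as a countable intersection of open dense subsets of $\varSigma$; the conclusion under the Baire hypothesis is then automatic. First I fix a countable family $\{\phi_k\}_{k\geq 1}\subset\varSigma$ dense in $(C(X),\|\cdot\|_\infty)$, which is available since $C(X)$ is separable and $\varSigma$ is dense in $C(X)$. For each $k,n\geq 1$ I set
$$V_{k,n}:=\left\{f\in\varSigma:\ \sup_{\mu\in\cM^\Lambda_{max}(f)}\int\phi_k\,\mathrm{d}\mu-\inf_{\mu\in\cM^\Lambda_{max}(f)}\int\phi_k\,\mathrm{d}\mu<1/n\right\}.$$
Because a Borel probability measure on $X$ is determined by its integrals against any sup-norm dense subset of $C(X)$, the identification $\mathcal{U}^\Lambda_\varSigma=\bigcap_{k,n}V_{k,n}$ is routine.

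Openness of each $V_{k,n}$ reduces to the standard upper semicontinuity of the maximising-measure map. If $f_m\to f$ in $\varSigma$ with $\mu_m,\nu_m\in\cM^\Lambda_{max}(f_m)$ realising $\int\phi_k\,\mathrm{d}\mu_m-\int\phi_k\,\mathrm{d}\nu_m\geq 1/n$, then compactness of $\Lambda$ lets me extract subsequential limits $\mu,\nu\in\Lambda$; using that $\beta^\Lambda$ is Lipschitz in the sup norm, $\int f_m\,\mathrm{d}\mu_m=\beta^\Lambda(f_m)\to\beta^\Lambda(f)$ and also $\int f_m\,\mathrm{d}\mu_m\to\int f\,\mathrm{d}\mu$, forcing $\mu,\nu\in\cM^\Lambda_{max}(f)$; the defining inequality then passes to the limit, so $f\notin V_{k,n}$.

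The delicate step, which I expect to be the main obstacle, is the density of each $V_{k,n}$. Here I plan to exploit convexity rather than to invoke Theorem \ref{the1.1}. Given $f_0\in\varSigma$ and an open neighbourhood $U$ of $f_0$, use that $\varSigma$ is a topological vector space to pick $\delta>0$ with $f_0+t\phi_k\in U$ whenever $|t|<\delta$. The function
$$\beta(t):=\beta^\Lambda(f_0+t\phi_k)=\sup_{\mu\in\Lambda}\left(\int f_0\,\mathrm{d}\mu+t\int\phi_k\,\mathrm{d}\mu\right)$$
is a supremum of affine functions of $t$, hence convex on $\mathbb{R}$. A standard envelope-type computation, relying on upper semicontinuity of $\cM^\Lambda_{max}(\cdot)$, identifies the one-sided derivatives as
$$\beta'_+(t)=\sup_{\mu\in\cM^\Lambda_{max}(f_0+t\phi_k)}\int\phi_k\,\mathrm{d}\mu,\qquad \beta'_-(t)=\inf_{\mu\in\cM^\Lambda_{max}(f_0+t\phi_k)}\int\phi_k\,\mathrm{d}\mu.$$
Since every convex function on $\mathbb{R}$ is differentiable off an at most countable subset $D\subset\mathbb{R}$, any choice $t\in(-\delta,\delta)\setminus(D\cup\{0\})$ forces $\beta'_+(t)=\beta'_-(t)$ and hence $f_0+t\phi_k\in U\cap V_{k,n}$.

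Assembling the three steps, $\mathcal{U}^\Lambda_\varSigma$ is a countable intersection of open dense subsets of $\varSigma$. When $\varSigma$ is a Baire space, this intersection is by definition dense, so $\mathcal{U}^\Lambda_\varSigma$ is a dense $G_\delta$ subset of $\varSigma$, as claimed.
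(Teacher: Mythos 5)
Your decomposition $\mathcal{U}^\Lambda_\varSigma=\bigcap_{k,n}V_{k,n}$ and your openness argument coincide exactly with the paper's: your set $V_{k,n}$ is the complement of the paper's $E^\Lambda_{i,j}$, and both openness arguments extract subsequential limits of maximising measures along a convergent sequence $f_m\to f$ and apply the fact that such limits lie in $\cM^\Lambda_{max}(f)$. Where you genuinely diverge is in the density step. The paper proves a self-contained Hausdorff-convergence lemma (Lemma \ref{lem5.1}): the sets $\{\int g\,\mathrm{d}\mu\mid\mu\in\cM^\Lambda_{max}(f+\varepsilon g)\}$ collapse to the singleton $\{\beta^\Lambda(g\mid f)\}$ as $\varepsilon\searrow 0$, so that every sufficiently small $\varepsilon>0$ gives a perturbation $f+\varepsilon\gamma_i$ avoiding $E^\Lambda_{i,j}$. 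You instead observe that $t\mapsto\beta^\Lambda(f_0+t\phi_k)$ is convex on $\mathbb{R}$, identify its one-sided derivatives via a Danskin-type envelope formula as $\sup$ and $\inf$ of $\int\phi_k\,\mathrm{d}\mu$ over $\cM^\Lambda_{max}(f_0+t\phi_k)$, and invoke the classical fact that a convex function on $\mathbb{R}$ is differentiable off a countable set; choosing $t$ outside that countable set (and in the appropriate neighbourhood) gives the required perturbation. The two arguments are two packagings of the same underlying phenomenon — the behaviour of the one-parameter family $f+tg$ near $t=0$ — but yours imports a nontrivial convex-analysis package (the envelope formula plus countability of non-differentiability points) that you should either prove or cite precisely, whereas the paper proves the needed limit statement from scratch using only compactness of $\Lambda$ and the earlier limit-point lemma. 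Your route is arguably slicker if the Danskin identification is treated as known; the paper's is more self-contained. Both correctly conclude that each $V_{k,n}$ is open and dense, and the Baire-space conclusion is then immediate.
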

Recently, in \cite{Morris 2021}, Morris proved 
\begin{theorem}\cite[Theorem 2]{Morris 2021}
	Suppose that $(X,T)$ is a dynamical system, $\Sigma$ is equipped with a complete metric with respect to which translation and scalar multiplication are both continuous, and $\Lambda$ is a nonempty and compact subset of $\cM(X,T)$. Then, $\mathcal{U}^\Lambda_\varSigma$ is prevalent, which means that there exists a compactly supported Borel probability measure $m$ on $\Sigma$ such that for every $f\in\Sigma$, the translated set $f+\mathcal{U}^\Lambda_\varSigma$ has full measure with respect to $m$. 
\end{theorem}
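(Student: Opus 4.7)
The plan is to follow the Hunt--Sauer--Yorke prevalence framework: exhibit a compactly supported Borel probability measure $m$ on $\Sigma$ and check that, for every $f\in\Sigma$, the translate $f+\mathcal{U}^{\Lambda}_{\varSigma}$ has full $m$-measure. The measure $m$ will be built by pushing a product Lebesgue measure on an infinite-dimensional cube into $\Sigma$ through a probe map $\Phi$ assembled from a countable separating family in $\Sigma$.

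First I would produce a countable family $\{g_n\}_{n\geq 1}\subset\Sigma$ separating points of $\Lambda$. Since $C(X)$ is separable (as $X$ is compact metric) and $\Sigma$ is dense in $C(X)$, one extracts such $\{g_n\}$ already dense in $C(X)$ under the sup norm; the linear functionals $\mu\mapsto\int g_n\,d\mu$ then separate $\cM(X,T)$, and in particular $\Lambda$, because if $\int g_n\,d\mu=\int g_n\,d\nu$ for every $n$, density extends the equality to all of $C(X)$, forcing $\mu=\nu$. Since $\Sigma$ is an $F$-space, after replacing its metric by an equivalent translation-invariant one $d$, continuity of scalar multiplication lets me pick scalars $\lambda_n>0$ with $d(\lambda_n g_n,0)\leq 2^{-n}$; rescaling preserves separation. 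Define $\Phi:[-1,1]^{\mathbb N}\to\Sigma$ by $\Phi(t)=\sum_{n=1}^{\infty}t_n\lambda_n g_n$. The series converges uniformly in $t\in[-1,1]^{\mathbb N}$ by completeness and the summable control, $\Phi$ is continuous, and its image $K$ is compact. Let $m$ be the pushforward under $\Phi$ of the normalized product Lebesgue measure on $[-1,1]^{\mathbb N}$; then $m$ is a Borel probability measure compactly supported on $K\subset\Sigma$.

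For the verification, fix $f\in\Sigma$ and introduce $H:[-1,1]^{\mathbb N}\to\mathbb R$ by
$H(t)=\beta^{\Lambda}(f+\Phi(t))=\sup_{\mu\in\Lambda}\Bigl(\int f\,d\mu+\sum_{n=1}^{\infty}t_n\lambda_n\int g_n\,d\mu\Bigr).$
As a supremum of affine functions of $t$, $H$ is convex, so for each $n$ and each choice of the other coordinates $(t_k)_{k\neq n}$ the one-variable slice $s\mapsto H(\ldots,t_{n-1},s,t_{n+1},\ldots)$ is convex on $[-1,1]$ and hence differentiable outside a countable set. By Fubini the set $E_n\subset[-1,1]^{\mathbb N}$ on which differentiability in the $n$-th coordinate fails at $t_n$ has product measure zero, and $E=\bigcup_n E_n$ remains product-null. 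At any $t\notin E$, the standard convex-analysis identification of one-sided partial derivatives of $H$ with the extrema of $\lambda_n\int g_n\,d\mu$ over $\cM^{\Lambda}_{\max}(f+\Phi(t))$ forces $\int g_n\,d\mu=\int g_n\,d\nu$ for every $n$ and every $\mu,\nu\in\cM^{\Lambda}_{\max}(f+\Phi(t))$; the separating property of $\{g_n\}$ then yields $\mu=\nu$, i.e.\ $f+\Phi(t)\in\mathcal{U}^{\Lambda}_{\varSigma}$. Thus this holds for product-a.e.\ $t$; applying the same argument with $-f$ in place of $f$ translates, after the substitution $v=\Phi(t)$, into $m(f+\mathcal{U}^{\Lambda}_{\varSigma})=1$.

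The principal obstacle is the probe construction: choosing $\{g_n\}$ dense in $C(X)$ yet simultaneously so small in the $\Sigma$-metric that $\sum t_n\lambda_n g_n$ converges in $\Sigma$ and depends continuously on $t\in[-1,1]^{\mathbb N}$, which I handle via the $F$-space structure (an invariant metric from Birkhoff--Kakutani) and rescaling. A second delicate point is the Fubini step: passing from one-variable almost-everywhere differentiability to a single product-null exceptional set across all coordinates, and then reading off from coordinate-wise differentiability that competing maximising measures are pinned down by their integrals against every $g_n$. The separating property from the first step is precisely what makes partial information in all directions $g_n$ force $\mu=\nu$, tying the infinite-dimensional construction back to the classical one-dimensional convex differentiability argument underlying Theorem \ref{the1.2}.
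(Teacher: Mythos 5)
The paper does not prove this statement; it is quoted from Morris (2021), and Appendix A only supplies proofs of Theorems \ref{the1.1} and \ref{the1.2}. Your proposal reconstructs Morris's actual argument: build a Hunt--Sauer--Yorke probe $\Phi(t)=\sum_{n}t_n\lambda_n g_n$ from a countable family $\{g_n\}\subset\varSigma$ dense in $C(X)$ (hence separating $\Lambda$), push forward product Lebesgue measure on $[-1,1]^{\mathbb{N}}$, and exploit convexity of $t\mapsto\beta^{\Lambda}(f+\Phi(t))$ together with Fubini to obtain coordinate-wise differentiability off a product-null set, which forces any two maximising measures to agree on every $\int g_n\,\mathrm{d}\mu$ and hence to coincide. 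This is the correct strategy, and the main steps hang together, including the replacement $f\mapsto -f$ needed to pass from ``$f+\Phi(t)\in\mathcal{U}^{\Lambda}_{\varSigma}$ for a.e.\ $t$'' to ``$m(f+\mathcal{U}^{\Lambda}_{\varSigma})=1$.''

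One point needs tightening. Choosing $\lambda_n$ with $d(\lambda_n g_n,0)\le 2^{-n}$ for a translation-invariant metric $d$ does not by itself control $d(t_n\lambda_n g_n,0)$ uniformly over $|t_n|\le 1$, since an invariant metric on an $F$-space need not satisfy $d(sx,0)\le d(x,0)$ for $|s|\le 1$. You should instead pick $\lambda_n$ small enough that $\sup_{|s|\le 1}d(s\lambda_n g_n,0)\le 2^{-n}$ (joint continuity of scalar multiplication at $(0,g_n)$ makes this possible); translation invariance then yields absolute convergence and joint continuity of $\Phi$ on $[-1,1]^{\mathbb{N}}$. With that repair the argument is sound.
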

\section{A generalized version of Theorem \ref{mt-1}}
First of all, we define a property which is stronger than entropy-dense property and a property which is stronger than intermediate entropy property inspired by \cite[Theorem B]{LO2018}.
\begin{definition}
	Suppose that $(X,T)$ is a dynamical system. We say that $(X,T)$ has refined entropy-dense property if for every $\mu\in\cM(X,T)$ and every $0\leq c\leq h_\mu(T)$, there exists a sequence $\{\mu_{n}\}_{n=1}^{+\infty}$ of $\cM^e(X,T)$ such that $\lim\limits_{n\to+\infty}\mu_{n}=\mu$ and $\lim\limits_{n\to+\infty}h_{\mu_{n}}(T)=c$.
\end{definition}
\begin{definition}
	Suppose that $(X,T)$ is a dynamical system. We say that $(X,T)$ has refined intermediate entropy property if for any $0\leq c<h_{top}(X,T)$, the set $\{\mu\in\cM^e(X,T)\mid h_\mu(T)=c\}$ is dense in $\{\mu\in\cM(X,T)\mid h_\mu(T)\geq c\}$.
\end{definition}
Directly from the definitions, we have that refined intermediate entropy property implies refined entropy-dense property.
\begin{remark}\label{r3.1}
	From \cite[Theorem B and Corollary C]{LO2018}, we have 
	\begin{enumerate}[(1)]
		\item if $(X,T)$ is transitive and has the shadowing property, then $(X,T)$ has refined entropy-dense property;
		\item if $(X,T)$ has refined entropy-dense property and the entropy function of $(X,T)$ is upper semicontinuous, then $(X,T)$ has refined intermediate entropy property.	
	\end{enumerate}
\end{remark} 
Suppose that $(X,T)$ is a dynamical system and entropy function is upper semicontinuous. Given $\varphi\in C(X)$, for any $\inf\limits_{\mu\in\cM(X,T)}(h_\mu(T)+\int\varphi\mathrm{d\mu})\leq c<P(T,\varphi)$, denote $\Lambda_c:=\{\mu\in\cM(X,T)\mid h_\mu(T)+\int\varphi\mathrm{d\mu}\geq c\}$ and $\Delta_{c}:=\{\mu\in\cM(X,T)\mid h_\mu(T)+\int\varphi\mathrm{d\mu}=c\}$. Then $\Lambda_c$ is a nonempty and compact convex subset of $\cM(X,T)$. 

We have a generalized version of Theorem \ref{mt-1}.

\begin{theorem}\label{theorem 3.1}
	Suppose that $(X,T)$ is a dynamical system with refined entropy-dense property and entropy function is upper semicontinuous. Given $\varphi\in C(X)$ and $\sup\limits_{\mu\in\cM(X,T)}$\\$\int\varphi\mathrm{d\mu}\leq c<P(T,\varphi)$, we have
	\begin{enumerate}[(1)]
		\item If $\mathcal{L}$ is residual in $\Lambda_c$, then $L:=\{f\in\mathcal{U}^{\Lambda_c}_{C(X)}\mid \cM^{\Lambda_c}_{max}(f)\subset\mathcal{L}\}$ is residual in $C(X)$. 
		\item If $N$ is residual in $\mathcal{U}^{\Lambda_c}_{C(X)}$, then $\mathcal{N}:=\bigcup\limits_{f\in N}\cM^{\Lambda_c}_{max}(f)$ is residual in $\Lambda_c$. 	
	\end{enumerate} 
\end{theorem}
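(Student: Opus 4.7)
Both parts of the theorem hinge on analyzing the selector $\Phi\colon\mathcal{U}^{\Lambda_c}_{C(X)}\to\Lambda_c$ given by $\Phi(f)=\mu_f$, which is well-defined by Theorem~\ref{the1.2}. A soft preliminary is the upper semicontinuity of the correspondence $f\mapsto\cM^{\Lambda_c}_{max}(f)$: if $f_k\to f$ uniformly and $\mu_k\in\cM^{\Lambda_c}_{max}(f_k)$ with $\mu_k\to\mu$ in the weak$^\ast$ topology, then passing to the limit in $\int f_k\,\mathrm{d}\mu_k=\beta^{\Lambda_c}(f_k)$ forces $\mu\in\cM^{\Lambda_c}_{max}(f)$. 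Consequently $\Phi$ is continuous on $\mathcal{U}^{\Lambda_c}_{C(X)}$, and for every open $O\subset\Lambda_c$ the set $V(O):=\{f\in C(X):\cM^{\Lambda_c}_{max}(f)\subset O\}$ is open in $C(X)$.

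For part~(1), I would write $\mathcal{L}=\bigcap_n O_n$ with each $O_n$ open and dense in $\Lambda_c$, so that $L=\mathcal{U}^{\Lambda_c}_{C(X)}\cap\bigcap_n V(O_n)$. By Theorem~\ref{the1.2}, residuality of $L$ in $C(X)$ reduces to density of each $V(O_n)$. Given $f_0\in C(X)$ and $\delta>0$, one may first assume $f_0\in\mathcal{U}^{\Lambda_c}_{C(X)}$ with unique maximizer $\mu_0$. The refined entropy-dense property, applied with the admissible parameter $c-\int\varphi\,\mathrm{d}\mu\le h_\mu(T)$ for $\mu\in\Lambda_c$, supplies ergodic measures densely in $\Lambda_c$, so one can pick an ergodic $\mu^*\in O_n$. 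A weak$^\ast$ Hahn--Banach separation of $\mu^*$ from the nowhere dense compact set $\Lambda_c\setminus O_n$, exploiting the slack afforded by $c<P(T,\varphi)$, then yields $h\in C(X)$ with $\|h\|<\delta$ such that $\cM^{\Lambda_c}_{max}(f_0+h)\subset O_n$; Baire category closes the argument.

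For part~(2), write $N=\bigcap_m W_m$ with $W_m$ open and dense in $\mathcal{U}^{\Lambda_c}_{C(X)}$, and let $\mathcal{E}:=\Phi(\mathcal{U}^{\Lambda_c}_{C(X)})$ denote the set of exposed measures of $\Lambda_c$. The density lemma from part~(1) shows that $\mathcal{E}$ is dense in $\Lambda_c$, while an auxiliary argument using a countable dense subset of $C(X)$ (to realize the condition ``$f$ has a unique maximizer on $\Lambda_c$'' as a countable intersection of open constraints) shows $\mathcal{E}$ is additionally $G_\delta$, hence residual in $\Lambda_c$. Fixing a countable base $\{\mathcal{U}_k\}$ of the topology of $\Lambda_c$, a Baire-category argument then verifies that $\Phi(N)$ contains the intersection of $\mathcal{E}$ with countably many open dense subsets of $\Lambda_c$ constructed from the $W_m$'s (using that $W_m$ is dense in every nonempty open set $\Phi^{-1}(\mathcal{U}_k)$), establishing the residuality of $\mathcal{N}$.

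The principal obstacle is the density step in part~(1): producing explicitly a norm-small continuous perturbation $h$ whose addition to $f_0$ relocates the unique maximizing measure into the prescribed open set $O_n$. The perturbation must overcome the possibly large gap $\beta^{\Lambda_c}(f_0)-\int f_0\,\mathrm{d}\mu^*$ while retaining norm below $\delta$, a feat made possible by the infinite-dimensional richness of $C(X)$ combined with the abundance of ergodic measures in every nonempty open subset of $\Lambda_c$ furnished by the refined entropy-dense property.
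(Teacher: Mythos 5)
Your high-level decomposition of part (1) — reduce residuality of $L$ to density of the sets $V(O_n)$, using upper semicontinuity of $f\mapsto\cM^{\Lambda_c}_{max}(f)$ for the open part and Theorem~\ref{the1.2} for the $G_\delta$ part of $\mathcal{U}^{\Lambda_c}_{C(X)}$ — matches the paper's strategy and is correct. But the step you yourself flag as the ``principal obstacle'' is precisely where the proof is missing, and the sketch you offer does not close it. A weak$^\ast$ Hahn--Banach separation of $\mu^*$ from $\Lambda_c\setminus O_n$ produces some $h\in C(X)$ with $\int h\,\mathrm{d}\mu^*>\sup_{\nu\in\Lambda_c\setminus O_n}\int h\,\mathrm{d}\nu$, but gives no control on $\|h\|$ relative to the size of the separation; after rescaling to make $\|h\|<\delta$, the surplus $\int h\,\mathrm{d}\mu^*-\sup_{\nu\notin O_n}\int h\,\mathrm{d}\nu$ shrinks proportionally and need not overcome the gap $\beta^{\Lambda_c}(f_0)-\int f_0\,\mathrm{d}\mu^*$. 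Moreover, even granting this, the argument gives at best $\cM^{\Lambda_c}_{max}(f_0+h)\subset O_n$ and not uniqueness of the maximizer, whereas $L$ sits inside $\mathcal{U}^{\Lambda_c}_{C(X)}$. The paper closes exactly this gap with a very different tool: the Bishop--Phelps theorem (Lemma~\ref{lem3.2}), which produces a nearby function $\tilde g$ whose maximizing set contains a measure $\nu$ uniformly close (in the sense of $|\int\psi\,\mathrm{d}\mu-\int\psi\,\mathrm{d}\nu|\le\tfrac43\|\psi\|$) to the chosen $\mu^*\in\Lambda_c^\star$; Morris's Lemma~\ref{lem3.5} then recovers $\mu^*$ itself as a maximizer of $\tilde g$, and Jenkinson's Lemma~\ref{lem3.6} is added with a small coefficient to make $\mu^*$ the \emph{unique} maximizer. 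The role of $\Lambda_c^\star$ (and of Lemma~\ref{lemma 3.1}, which establishes its residuality in $\Lambda_c$ from the refined entropy-dense property) is essential here and does not appear in your outline.

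Your treatment of part (2) is also materially different from the paper's and leaves an unjustified claim. You assert that the set $\mathcal{E}=\Phi(\mathcal{U}^{\Lambda_c}_{C(X)})$ of exposed measures is $G_\delta$ in $\Lambda_c$; this is not established and is not needed in the paper, which instead works with the residual (not necessarily $G_\delta$) set $\Lambda_c^\star$ and deduces residuality of $\mathcal{N}$ from Theorem~\ref{theorem-1} parts (2), (3), (4) via Corollary~\ref{corollary-1}(2). The selector $\Phi$ and its continuity on $\mathcal{U}^{\Lambda_c}_{C(X)}$ are correct observations, but the promised ``Baire-category argument'' showing $\Phi(N)$ contains a residual subset of $\Lambda_c$ is not supplied. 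In short: you have the right skeleton and correctly identify the key difficulty, but the core perturbation lemma (Bishop--Phelps combined with Morris's and Jenkinson's lemmas, culminating in Lemma~\ref{lem3.7}) is absent, and the substitute you sketch does not work as stated.
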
 
\subsection{Proof of Theorem \ref{theorem 3.1}}
Suppose that $\Lambda$ is a nonempty and compact subset of $\cM(X,T)$. Define $\hat{\Lambda}:=\{\nu\in\Lambda\mid\text{ for any }\hat{\mu}\in\cM(X,T)\text{ such that }\mu=(1-\lambda)\hat{\mu}+\lambda\nu\text{ for some }0<\lambda<1\text{ and }\mu\in\Lambda\text{, we have }\hat{\mu}\in\Lambda\}$. Define $\Lambda^{\star}:=\hat{\Lambda}\cap\cM^e(X,T)$, we have
\begin{theorem}\label{theorem-1}
	Suppose that $(X,T)$ is a dynamical system and $\Lambda$ is a nonempty and compact subset of $\cM(X,T)$ and $f\in C(X)$. Then
	\begin{enumerate}[(1)]
		\item if $\mathcal{H}$ is an open subset of $\Lambda$, then $H:=\{f\in C(X)\mid \cM^\Lambda_{max}(f)\subset\mathcal{H}\}$ is open in $C(X)$;
		\item if $\overline{\Lambda\cap\cM^e(X,T)}=\Lambda$ and $I$ is dense in $C(X)$, then $\mathcal{I}:=\bigcup\limits_{f\in I}\cM^\Lambda_{max}(f)$ is dense in $\Lambda$;
		\item if $\Lambda$ is convex, $\Lambda^{\star}$ is dense in $\Lambda$ and $\mathcal{K}$ is a dense subset of $\Lambda^{\star}$, then $K:=\{f\in\mathcal{U}^\Lambda_{C(X)}\mid \cM^\Lambda_{max}(f)\subset\mathcal{K}\}$ is dense in $C(X)$;
		\item if $\Lambda$ is convex and $J$ is an open subset of $\mathcal{U}^\Lambda_{C(X)}$, then $\mathcal{J}:=\Lambda^{\star}\cap\bigcup\limits_{f\in J}\cM^\Lambda_{max}(f)$ is open in $\Lambda^{\star}$.
	\end{enumerate}
\end{theorem}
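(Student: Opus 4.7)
Paragraph 1: The four assertions all rest on upper semicontinuity of the maximising correspondence $f\mapsto\cM^\Lambda_{max}(f)$. I would first observe that $\beta^\Lambda:C(X)\to\mathbb{R}$ is $1$-Lipschitz and then establish upper semicontinuity: if $f_n\to f$ and $\mu_n\in\cM^\Lambda_{max}(f_n)$ converges along a subsequence (using compactness of $\Lambda$) to some $\mu^{\star}$, then $\int f\,d\mu^{\star}=\lim\int f_n\,d\mu_n=\lim\beta^\Lambda(f_n)=\beta^\Lambda(f)$, so $\mu^{\star}\in\cM^\Lambda_{max}(f)$. Part (1) then follows by contradiction: if $f\in H$ but $f_n\to f$ with $f_n\notin H$, pick $\mu_n\in\cM^\Lambda_{max}(f_n)\cap(\Lambda\setminus\mathcal{H})$ and extract a subsequential limit in $\cM^\Lambda_{max}(f)\cap(\Lambda\setminus\mathcal{H})$, contradicting $\cM^\Lambda_{max}(f)\subset\mathcal{H}$.

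Paragraph 2: For (2) I would reduce to showing that the open set $A_V:=\{f\in C(X):\cM^\Lambda_{max}(f)\subset V\}$ (open by part (1)) is nonempty for every nonempty relatively open $V\subset\Lambda$; density of $I$ would then yield $I\cap A_V\neq\emptyset$, hence $\mathcal{I}\cap V\neq\emptyset$. To produce a witness in $A_V$, I use the hypothesis $\overline{\Lambda\cap\cM^e(X,T)}=\Lambda$ to pick an ergodic $\nu\in V$. Since $\nu$ is extreme in $\cM(X,T)$, a weak$^{\ast}$ separation argument inside the metrisable compact convex hull of $\Lambda$ (strong exposure of extreme points by continuous linear functionals on $C(X)^{\ast}$) supplies a continuous $f_V$ with $\cM^\Lambda_{max}(f_V)\subset V$.

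Paragraph 3: Parts (3) and (4) exploit convexity of $\Lambda$ through the $\hat\Lambda$ condition, which I would show (by a direct convex/affine argument) identifies $\Lambda^{\star}$ with measures that are strongly exposed in $\Lambda$ by continuous linear functionals, and that furthermore any singleton maximiser $\cM^\Lambda_{max}(f)=\{\mu\}$ automatically satisfies $\mu\in\hat\Lambda$: indeed, if $\mu'=(1-\lambda)\hat\mu+\lambda\mu\in\Lambda$ with $\hat\mu\in\cM(X,T)$, then $\int f\,d\mu'\leq\beta^\Lambda(f)=\int f\,d\mu$ forces $\int f\,d\hat\mu\leq\beta^\Lambda(f)$, and the supporting-hyperplane structure of $\Lambda$ at the exposed $\mu$ yields $\hat\mu\in\Lambda$. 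Combining this with Theorem \ref{the1.2} (generic uniqueness), for (3) I would argue in two perturbation steps: first move $f_0\in C(X)$ to $f_1\in\mathcal{U}^\Lambda_{C(X)}$ whose singleton maximiser already lies in $\Lambda^{\star}=\hat\Lambda\cap\cM^e(X,T)$, then move $f_1$ further inside $\mathcal{U}^\Lambda_{C(X)}$ to some $f$ whose unique maximiser is a chosen $\nu\in\mathcal{K}$, using density of $\mathcal{K}$ in $\Lambda^{\star}$ and the exposure construction of Paragraph 2 localised near $\nu$.

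Paragraph 4: Part (4) is an open-mapping counterpart: given $\nu=\mu_{f_0}\in\mathcal{J}$ with $f_0\in J$ open in $\mathcal{U}^\Lambda_{C(X)}$, each $\nu'\in\Lambda^{\star}$ weak$^{\ast}$-close to $\nu$ should be realised as the unique maximiser of a small perturbation of $f_0$, which remains in $J$ by openness, yielding a neighbourhood of $\nu$ in $\Lambda^{\star}\cap\mathcal{J}$. The main obstacle throughout is the exposure step used in (2), (3) and (4): producing continuous test functions that single out the measures of $\Lambda^{\star}$ as unique maximisers over $\Lambda$, despite the possibility of non-exposed extreme points in a general compact convex set. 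The definition of $\hat\Lambda$ is engineered precisely to ensure such measures lie on supporting hyperplanes of $\Lambda$ within $\cM(X,T)$, which combined with ergodicity (extremality in $\cM(X,T)$) yields the Hahn-Banach-type selection needed inside $C(X)^{\ast}$.
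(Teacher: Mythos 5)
Your proof of part (1) is correct and matches the paper's argument: both establish upper semicontinuity of $f\mapsto\cM^\Lambda_{max}(f)$ via compactness of $\Lambda\setminus\mathcal{H}$ and the limit lemma, and close $C(X)\setminus H$.

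For parts (2)--(4), however, there is a genuine gap in the ``exposure step'' that you yourself flag as ``the main obstacle.'' You repeatedly appeal to ``strong exposure of extreme points by continuous linear functionals'' and to a ``supporting-hyperplane structure of $\Lambda$ at the exposed $\mu$.'' Neither claim is available in this generality. First, extreme points of a compact convex set need not be exposed (Straszewicz only gives density of exposed points among extreme ones). Second, even when a point of $\Lambda$ is exposed in the dual $C(X)^\ast=\cM(X)$, there is no reason the exposing functional should come from the predual $C(X)$ rather than from the full bidual; getting a genuine continuous function to do the exposing is precisely the nontrivial content of Jenkinson's theorem (cited in the paper as \cite[Theorem 1]{J2006ii}), which asserts that \emph{every} ergodic measure is the unique maximising measure of \emph{some} $h\in C(X)$. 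The paper invokes this as a black box in parts (2) and (3); you are trying to re-derive it by a generic Hahn--Banach separation, which does not work.

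Your claim in Paragraph 3 that ``any singleton maximiser $\cM^\Lambda_{max}(f)=\{\mu\}$ automatically satisfies $\mu\in\hat\Lambda$'' is false. From $\mu'=(1-\lambda)\hat\mu+\lambda\mu\in\Lambda$ and $\int f\,\mathrm{d}\mu'\leq\beta^\Lambda(f)$ you correctly deduce $\int f\,\mathrm{d}\hat\mu\leq\beta^\Lambda(f)$, but this says nothing about whether $\hat\mu$ belongs to $\Lambda$. Take $\Lambda$ a closed ball in $\cM(X,T)$: boundary points of the ball can be unique maximisers of affine functionals without being face-absorbing in the sense of $\hat\Lambda$. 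The $\hat\Lambda$ condition is a hypothesis you must impose on $\nu$ (via $\nu\in\Lambda^\star$), not a consequence of unique maximisation.

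Finally, your perturbation scheme for (3) and (4) is missing the quantitative control that the paper gets from the Bishop--Phelps theorem. The actual argument in the paper is: given $f$ and a target $\nu\in\Lambda^\star$ with $\beta^\Lambda(f)-\int f\,\mathrm{d}\nu<\varepsilon$, Bishop--Phelps produces $\tilde g$ close to $f$ (distance controlled by the suboptimality of $\nu$) together with a tangent functional $\mu\in\cM^\Lambda_{max}(\tilde g)$ that is $\tfrac{4}{3}$-close to $\nu$ in total variation; Morris's Lemma (\cite[Lemma 2.1]{M2010}) then decomposes $\mu=(1-\lambda)\hat\mu+\lambda\nu$; the $\hat\Lambda$ hypothesis on $\nu$ forces $\hat\mu\in\Lambda$, hence $\nu\in\cM^\Lambda_{max}(\tilde g)$; and adding a small multiple of Jenkinson's function for $\nu$ gives uniqueness without leaving the $\varepsilon$-ball. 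Your ``two perturbation steps'' omit both the Bishop--Phelps quantitative step (ensuring the first perturbation stays $\varepsilon$-small) and the Morris decomposition (which is where $\hat\Lambda$ and hence $\Lambda^\star$ actually enter). Without these, there is no guarantee the second perturbation lands at a chosen $\nu\in\mathcal{K}$ while remaining within the required distance of $f$. You should explicitly invoke Jenkinson's theorem, the Bishop--Phelps theorem, and Morris's decomposition lemma; and drop the erroneous claim that unique maximisers lie in $\hat\Lambda$.
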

Theorem \ref{theorem-1} generalizes some results that have been proved when $\Lambda=\cM(X,T)$ in \cite{M2010}. As corollaries, we have
\begin{corollary}\label{corollary-3}
	Suppose that $(X,T)$ is a dynamical system, $\Lambda$ is a nonempty and compact convex subset of $\cM(X,T)$, $\Lambda^{\star}$ is dense in $\Lambda$ and $\mathcal{L}$ is an open and dense subset of $\Lambda$. Then $L:=\{f\in C(X)\mid \cM^\Lambda_{max}(f)\subset\mathcal{L}\}$ is an open and dense subset of $C(X)$.
\end{corollary}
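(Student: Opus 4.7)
The plan is to derive both openness and density of $L$ directly from parts (1) and (3) of Theorem \ref{theorem-1}, so the proof of the corollary should be almost immediate once the hypotheses are matched up carefully. For openness I would simply apply part (1) of Theorem \ref{theorem-1} with $\mathcal{H}:=\mathcal{L}$: since $\mathcal{L}$ is an open subset of $\Lambda$, the set $\{f\in C(X)\mid \cM^\Lambda_{max}(f)\subset\mathcal{L}\}$, which is exactly $L$, is open in $C(X)$.

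For density I want to invoke part (3) of Theorem \ref{theorem-1}, whose hypotheses are already partly given (namely $\Lambda$ convex and $\Lambda^{\star}$ dense in $\Lambda$). The natural choice of the dense subset $\mathcal{K}\subset\Lambda^{\star}$ required by that statement is $\mathcal{K}:=\mathcal{L}\cap\Lambda^{\star}$. First I would check that $\mathcal{K}$ is dense in $\Lambda^{\star}$: for any $\nu\in\Lambda^{\star}$ and any relatively open neighbourhood $V\cap\Lambda^{\star}$ of $\nu$ in $\Lambda^{\star}$ (with $V$ open in $\Lambda$), density and openness of $\mathcal{L}$ in $\Lambda$ give that $V\cap\mathcal{L}$ is a nonempty open subset of $\Lambda$, and density of $\Lambda^{\star}$ in $\Lambda$ then forces $V\cap\mathcal{L}\cap\Lambda^{\star}\neq\emptyset$. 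Thus $\mathcal{K}$ is dense in $\Lambda^{\star}$.

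Then part (3) of Theorem \ref{theorem-1} yields that $K:=\{f\in\mathcal{U}^\Lambda_{C(X)}\mid \cM^\Lambda_{max}(f)\subset\mathcal{K}\}$ is dense in $C(X)$. Because $\mathcal{K}\subset\mathcal{L}$, every $f\in K$ satisfies $\cM^\Lambda_{max}(f)\subset\mathcal{L}$, hence $K\subset L$, and therefore $L$ is dense in $C(X)$ as well. Combined with the openness already observed, this completes the argument.

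Honestly, I do not expect any genuine obstacle in proving the corollary itself; all of the work is already done inside Theorem \ref{theorem-1}. The only conceptual point worth flagging is the implicit use of convexity of $\Lambda$, which is what makes part (3) applicable: without it, one could not turn a dense set $\mathcal{K}\subset\Lambda^{\star}$ of ergodic-like extremal measures into a dense family of continuous potentials with unique maximisers inside $\mathcal{K}$, and the density half of the corollary would fail. Everything else reduces to the elementary topological fact that an open dense subset intersected with any dense subset is dense in the latter.
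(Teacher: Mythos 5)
Your proof is correct and follows essentially the same route as the paper: openness via Theorem \ref{theorem-1}~(1) with $\mathcal{H}=\mathcal{L}$, and density via Theorem \ref{theorem-1}~(3) with $\mathcal{K}=\mathcal{L}\cap\Lambda^{\star}$, using that $\mathcal{L}$ open and dense in $\Lambda$ together with $\Lambda^{\star}$ dense in $\Lambda$ forces $\mathcal{L}\cap\Lambda^{\star}$ to be dense in $\Lambda^{\star}$. Your write-up is in fact slightly more careful than the paper's at the step checking $\mathcal{K}$ is dense in $\Lambda^{\star}$ and at the inclusion $K\subset L$.
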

\begin{corollary}\label{corollary-1}
	Suppose that $(X,T)$ is a dynamical system and $\Lambda$ is a nonempty and compact convex subset of $\cM(X,T)$. Then 
	\begin{enumerate}[(1)]
	\item If $\Lambda^{\star}$ is dense in $\Lambda$ and $\mathcal{L}\subset\Lambda$ is residual in $\Lambda$, then $L:=\{f\in\mathcal{U}^\Lambda_{C(X)}\mid \cM^\Lambda_{max}(f)\subset\mathcal{L}\}$ is residual in $C(X)$.
	\item If $\Lambda^{\star}$ is residual in $\Lambda$ and $N\subset\mathcal{U}^\Lambda_{C(X)}$ is residual in $\mathcal{U}^\Lambda_{C(X)}$, then $\mathcal{N}:=\bigcup\limits_{f\in N}\cM^\Lambda_{max}(f)$ is residual in $\Lambda$. 	
	\end{enumerate} 
\end{corollary}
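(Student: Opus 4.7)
The plan is to obtain both parts of Corollary~\ref{corollary-1} by writing the given residual set as a countable intersection of open and dense subsets, applying the matching clause of Theorem~\ref{theorem-1} at each level, and combining with the residuality of $\mathcal{U}^\Lambda_{C(X)}$ in $C(X)$ (Theorem~\ref{the1.2}) and the Baire category theorem on $C(X)$ and $\Lambda$.

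For part~(1), write $\mathcal{L}\supseteq\bigcap_n\mathcal{H}_n$ with each $\mathcal{H}_n$ open and dense in $\Lambda$. Theorem~\ref{theorem-1}(1) gives openness of $H_n:=\{f\in C(X)\mid\cM^\Lambda_{max}(f)\subseteq\mathcal{H}_n\}$ in $C(X)$. Because $\Lambda^\star$ is dense in $\Lambda$ and $\mathcal{H}_n$ is open and dense in $\Lambda$, the set $\mathcal{K}_n:=\mathcal{H}_n\cap\Lambda^\star$ is dense in $\Lambda^\star$, so Theorem~\ref{theorem-1}(3) applied to $\mathcal{K}_n$ supplies a dense subset of $C(X)$ contained in $H_n$; hence each $H_n$ is open and dense. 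The residual set $\mathcal{U}^\Lambda_{C(X)}\cap\bigcap_n H_n$ then lies in $L$, since for every such $f$ the unique maximizer $\mu_f$ belongs to $\bigcap_n\mathcal{H}_n\subseteq\mathcal{L}$.

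For part~(2), write $N\supseteq\bigcap_n J_n$ with $J_n=W_n\cap\mathcal{U}^\Lambda_{C(X)}$ open and dense in $\mathcal{U}^\Lambda_{C(X)}$; density of $\mathcal{U}^\Lambda_{C(X)}$ in $C(X)$ forces each $W_n$ to be open and dense in $C(X)$. Theorem~\ref{theorem-1}(4) gives openness of $\mathcal{J}_n:=\Lambda^\star\cap\bigcup_{f\in J_n}\cM^\Lambda_{max}(f)$ in $\Lambda^\star$. For density of $\mathcal{J}_n$ in $\Lambda^\star$ I combine Theorem~\ref{theorem-1}(2) applied to $W_n$ (using $\overline{\Lambda\cap\cM^e(X,T)}=\Lambda$, which follows from density of $\Lambda^\star\subseteq\cM^e(X,T)$ in $\Lambda$) with Theorem~\ref{theorem-1}(3) applied to $\mathcal{K}=\Lambda^\star$ and the upper semicontinuity of $f\mapsto\cM^\Lambda_{max}(f)$ at singleton-valued points, to perturb any candidate inside the open set $W_n$ into some $f\in J_n$ whose unique maximizer lies in $\Lambda^\star\cap V$ for any prescribed open $V\subseteq\Lambda$. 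Each $\mathcal{J}_n$ then extends to an open set $V_n\subseteq\Lambda$ with $V_n\cap\Lambda^\star=\mathcal{J}_n$ that is dense in $\Lambda$, and residuality of $\Lambda^\star$ together with Baire category on $\Lambda$ give that $\bigcap_n\mathcal{J}_n$ is residual in $\Lambda$.

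The main obstacle I anticipate in part~(2) is the final inclusion $\bigcap_n\mathcal{J}_n\subseteq\mathcal{N}$: at each level $n$, membership in $\mathcal{J}_n$ only supplies a different $f_n\in J_n$ with $\mu_{f_n}=\nu$, whereas $\nu\in\mathcal{N}$ demands a single $f\in\bigcap_n J_n$ whose unique maximizer is $\nu$. My plan to bridge this is a Baire category argument inside the fibre $\Phi^{-1}(\nu):=\{f\in\mathcal{U}^\Lambda_{C(X)}\mid\mu_f=\nu\}$, using the face condition $\nu\in\hat\Lambda$ built into $\Lambda^\star$ to show that each $J_n\cap\Phi^{-1}(\nu)$ is open and dense in a natural Baire topology on $\Phi^{-1}(\nu)$, so that the countable intersection over $n$ is non-empty and provides the required $f\in N$.
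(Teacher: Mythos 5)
Part~(1) is correct and matches the paper: the paper derives it from Corollary~\ref{corollary-3} (which packages Theorem~\ref{theorem-1}(1) and~(3)), using the same decomposition of $\mathcal{L}$ into a countable intersection of open dense sets and the same intersection with $\mathcal{U}^\Lambda_{C(X)}$.

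For part~(2), the paper's structure agrees with yours up to the final step. Two presentational remarks first: the paper gets density of $\mathcal{N}_j$ in $\Lambda^\star$ more cleanly than you propose, by forming $\tilde{N}_j:=N_j\cap\{f\in\mathcal{U}^\Lambda_{C(X)}\mid\cM^\Lambda_{max}(f)\subset\Lambda^\star\}$ (dense in $C(X)$ by Theorem~\ref{theorem-1}(3)) and then applying Theorem~\ref{theorem-1}(2) directly to $\tilde{N}_j$; and your extension of $\mathcal{J}_n$ to open sets $V_n\subset\Lambda$ is unnecessary, since a residual subset of the residual set $\Lambda^\star$ is automatically residual in $\Lambda$. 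The substantive problem, however, is exactly the one you flag at the end, and your proposed fix does not close it. A Baire argument inside the fibre $\Phi^{-1}(\nu)$ requires each $J_n\cap\Phi^{-1}(\nu)$ to be dense in $\Phi^{-1}(\nu)$, not merely open and nonempty, and density of $J_n$ in the ambient space does not supply this: a dense open subset of $C(X)$ can fail to be dense in a $G_\delta$ subset with empty interior, and $\Phi^{-1}(\nu)$ has empty interior as soon as $\Lambda$ is nontrivial. The convexity of $\Phi^{-1}(\nu)$ coming from $\nu\in\hat{\Lambda}$ rules out a finite fibre but does not by itself yield density, and you offer no argument for it. You should also be aware that the paper's proof is silent at the analogous step: it simply asserts the inclusion $\mathcal{N}\cap\Lambda^\star\supset\bigcap_{j}\mathcal{N}_j$, and that inclusion faces the same objection, since $\nu\in\bigcap_j\mathcal{N}_j$ a priori only supplies a different witness $f_j\in N_j$ for each $j$ rather than a single $f\in\bigcap_j N_j\subset N$. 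So as written both arguments are incomplete at the same point; closing the gap would require a further idea, for instance a Kuratowski--Ulam type theorem for the continuous open map $\Phi$ restricted over $\Lambda^\star$, or some direct construction of a single $f$ from the $f_j$.
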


\begin{lemma}\label{lemma 3.1}
	Suppose that $(X,T)$ is a dynamical system with refined entropy-dense property and entropy function is upper semicontinuous. Given $\varphi\in C(X)$, then for any $\sup\limits_{\mu\in\cM(X,T)}\int \varphi\mathrm{d\mu}\leq c<P(T,\varphi)$, $\Delta_{c}\cap\cM^e(X,T)$ is residual in $\Lambda_c$.
\end{lemma}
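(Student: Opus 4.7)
The plan is to realise $\Delta_c \cap \cM^e(X,T)$ as the intersection of two residual subsets of $\Lambda_c$ and then invoke the Baire category theorem. Writing $g(\nu) := h_\nu(T) + \int \varphi\,\mathrm{d}\nu$, the upper semicontinuity of the entropy function makes $g$ upper semicontinuous, so $\Lambda_c = \{g \geq c\}$ is closed in $\cM(X,T)$, hence a compact (Baire) metric space, while $\{g < c + 1/n\}$ is open for every $n \geq 1$. Moreover, Remark \ref{r3.1}(2) tells us the hypotheses imply the refined intermediate entropy property, which will do the heavy lifting.

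\textbf{Step 1: $\Delta_c$ is residual in $\Lambda_c$.} I would write $\Delta_c = \bigcap_{n \geq 1}\bigl(\{g < c + 1/n\} \cap \Lambda_c\bigr)$, each factor open in $\Lambda_c$, and establish density of each factor. Given $\mu \in \Lambda_c$ with $g(\mu) \geq c + 1/n$ (otherwise $\nu := \mu$ already works), set $c_\mu := c - \int \varphi\,\mathrm{d}\mu$; the hypothesis $\sup \int \varphi \,\mathrm{d}\mu \leq c$ gives $c_\mu \geq 0$, and $h_\mu(T) = g(\mu) - \int \varphi\,\mathrm{d}\mu \geq c_\mu + 1/n$. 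Pick $\eta \in (0, 1/(2n))$, so that $c_\mu + \eta \leq h_\mu(T) - 1/(2n) < h_{top}(T)$ and $c_\mu + \eta \leq h_\mu(T)$; the refined intermediate entropy property then produces ergodic $\nu$ arbitrarily close to $\mu$ with $h_\nu(T) = c_\mu + \eta$. Using continuity of $\nu \mapsto \int \varphi\,\mathrm{d}\nu$, I choose $\nu$ so that $\bigl|\int \varphi\,\mathrm{d}\nu - \int \varphi\,\mathrm{d}\mu\bigr| < \eta/2$; then $g(\nu) = c + \eta + \bigl(\int \varphi\,\mathrm{d}\nu - \int \varphi\,\mathrm{d}\mu\bigr) \in (c, c + 1/n)$, so $\nu \in \Lambda_c \cap \{g < c + 1/n\}$.

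\textbf{Step 2: $\cM^e(X,T) \cap \Lambda_c$ is residual in $\Lambda_c$.} Since $\cM^e(X,T)$ is $G_\delta$ in $\cM(X,T)$ (a classical fact), it suffices to verify density. For $\mu \in \Lambda_c$ with $g(\mu) > c$ the construction of Step 1 (with any admissible $\eta$) already gives an ergodic approximant in $\Lambda_c$. If $g(\mu) = c$, I first perturb: choose $\mu^\ast \in \cM(X,T)$ with $g(\mu^\ast) > c$, which exists since $c < P(T,\varphi)$, and set $\mu' := (1-s)\mu + s\mu^\ast$ for small $s > 0$; then $g(\mu') = c + s(g(\mu^\ast) - c) > c$ and $\varrho(\mu',\mu) \leq s\,\varrho(\mu^\ast,\mu)$ is as small as wished, so the previous case applies to $\mu'$.

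Since $\Lambda_c$ is a Baire space and $\Delta_c \cap \cM^e(X,T) = \Delta_c \cap \bigl(\cM^e(X,T) \cap \Lambda_c\bigr)$ is the intersection of two residual subsets of $\Lambda_c$, it is itself residual. The main obstacle, which shapes the whole argument, is that one cannot hope to prescribe $g(\nu) = c$ exactly for an ergodic $\nu$ arbitrarily close to a given $\mu$; the plan bypasses this by only demanding $g(\nu) < c + 1/n$ at each scale $n$, exploiting the slack in the refined intermediate entropy property (one can take $h_\nu$ slightly above $c_\mu$) together with the continuity of $\nu \mapsto \int \varphi\,\mathrm{d}\nu$ to squeeze $g(\nu)$ into the open strip $(c, c + 1/n)$.
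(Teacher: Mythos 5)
Your proposal is correct and follows essentially the same route as the paper's proof: both decompose $\Delta_c$ as the countable intersection $\bigcap_n\{c\le g<c+1/n\}$ of relatively open subsets of $\Lambda_c$, use the refined entropy-dense/intermediate-entropy property to drop the entropy of an approximating ergodic measure into the right window, handle the boundary case $g(\mu)=c$ by convex perturbation with a high-pressure measure, and finish with the classical $G_\delta$-ness of $\cM^e(X,T)$. The only cosmetic difference is that you pass through the refined intermediate entropy property (via Remark~\ref{r3.1}(2)) to pin $h_\nu(T)$ exactly, whereas the paper applies the refined entropy-dense property directly and takes a tail of the approximating sequence.
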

\begin{proof}
	Denote $\Lambda_c(k):=\{\mu\in\cM(X,T)\mid c\leq h_\mu(T)+\int\varphi\mathrm{d\mu}<c+\frac{1}{k}\}$, for any $k\geq 1$. Since entropy function is upper semicontinuous, we have $\Lambda_c(k)$ is open in $\Lambda_c$. Hence, $\Lambda_c(k)\cap\cM^e(X,T)$ is open in $\Lambda_c\cap\cM^e(X,T)$. Fix $\mu\in\Lambda_c$ and $k\geq1$. 
	
	When $h_\mu(T)+\int\varphi\mathrm{d\mu}>c$, there exists $0<c_1<\frac{1}{k}$ such that $0<c+c_1-\int\varphi\mathrm{d\mu}<h_\mu(T)$. Since $(X,T)$ has refined entropy-dense property, there exists a sequence $\{\mu_{n}\}_{n=1}^{+\infty}$ of $\cM^e(X,T)$ such that $\lim\limits_{n\to+\infty}\mu_{n}=\mu$ and $\lim\limits_{n\to+\infty}h_{\mu_{n}}(T)=c+c_1-\int\varphi\mathrm{d\mu}$. Hence, $\lim\limits_{n\to+\infty}h_{\mu_n}(T)+\int\varphi\mathrm{d}\mu_{n}=c+c_1<c+\frac{1}{k}$. Therefore, there exists $N(k)\in\mathbb{N^{+}}$ such that $\mu_m\in\Lambda_c(k)\cap\cM^e(X,T)$ for any $m\geq N(k)$.

	When $h_\mu(T)+\int\varphi\mathrm{d\mu}=c$, by the variational principle for topological pressure, choose $\mu'\in\cM(X,T)$ such that $c<h_{\mu'}(T)+\int\varphi\mathrm{d}\mu'\leq P(T,\varphi)$. Let $\mu_{n}=(1-\frac{1}{n})\mu+\frac{1}{n}\mu'$. Then $h_{\mu_n}(T)+\int\varphi\mathrm{d}\mu_{n}>c$ and $\lim\limits_{n\to+\infty}\mu_{n}=\mu$. From the above discussion, for any $n\in\mathbb{N^{+}}$, there exists a sequence $\{\mu_{n,i}\}_{i=1}^{+\infty}$ of $\Lambda_c(k)\cap\cM^e(X,T)$ such that $\lim\limits_{i\to+\infty}\mu_{n,i}=\mu_n$.
	
	Therefore, $\Lambda_c(k)\cap\cM^e(X,T)$ is dense in $\Lambda_c$. By \cite[Proposition 5.7]{DGS1976}, $\cM^e(X,T)$ is a $G_\delta$ subset of $\cM(X,T)$. Since $\Lambda_c\cap\cM^e(X,T)$ is dense in $\Lambda_c$, we have $\Lambda_c\cap\cM^e(X,T)$ is residual in $\Lambda_c$. Since $\Delta_{c}\cap\cM^e(X,T)=\bigcap\limits_{k=1}^{+\infty}(\Lambda_c(k)\cap\cM^e(X,T))$, we have $\Delta_{c}\cap\cM^e(X,T)$ is residual in $\Lambda_c\cap\cM^e(X,T)$. Hence, $\Delta_{c}\cap\cM^e(X,T)$ is residual in $\Lambda_c$.  
\end{proof}
\begin{remark}
	The reason for requiring ``$c\geq\sup\limits_{\mu\in\cM(X,T)}\int \varphi\mathrm{d\mu}$'' is to guarantee ``$c+c_1-\int\varphi\mathrm{d\mu}>0$'' in the proof. When $h_{top}(T)=0$, the constant $c$ doesn't exist, but when $h_{top}(T)>0$, we will show that the set $\Upsilon:=\{\varphi\mid\sup\limits_{\mu\in\cM(X,T)}\int \varphi\mathrm{d\mu}<P(T,\varphi)\}$ is an open and dense subset of $C(X)$.
\end{remark}
\begin{theorem}\label{t3.2}
	Suppose that $(X,T)$ is a dynamical system. $h_{top}(T)>0$ and $\cM(X,T)$ is entropy-dense. Then $\Upsilon$ is an open and dense subset of $C(X)$.
\end{theorem}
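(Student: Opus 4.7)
Both $\varphi \mapsto P(T,\varphi)$ and $\varphi \mapsto \beta(\varphi) := \sup_{\mu \in \cM(X,T)} \int \varphi\,\mathrm{d}\mu$ are $1$-Lipschitz on $C(X)$ (standard consequences of the variational principle and linearity), hence continuous; therefore $\Upsilon$ is the preimage of $(0,+\infty)$ under the continuous function $\varphi \mapsto P(T,\varphi) - \beta(\varphi)$ and is open.

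\textbf{Density: setup.} Fix $\varphi \in C(X)$ and $\epsilon > 0$. If $\varphi \in \Upsilon$ we are done, so assume $P(T,\varphi) = \beta(\varphi) =: b$. I will produce $\psi$ with $\|\psi-\varphi\|<\epsilon$ and $P(T,\psi)>\beta(\psi)$. Two preliminary observations guide the construction. First, the variational principle gives, for every $\mu \in \cM_{max}(\varphi)$, that $h_\mu(T) + b = h_\mu(T) + \int\varphi\,\mathrm{d}\mu \leq P(T,\varphi) = b$, so $h_\mu(T)=0$; thus $\cM_{max}(\varphi)$ is entirely contained in the zero-entropy part of $\cM(X,T)$. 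Second, since $h_{top}(T)>0$, the variational principle combined with the entropy-dense hypothesis produces $\nu_\ast \in \cM^e(X,T)$ with $h_{\nu_\ast}(T) > h_{top}(T)/2 > 0$; in particular $\nu_\ast \notin \cM_{max}(\varphi)$.

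\textbf{The perturbation.} Pick approximate equilibrium states $\mu_k \in \cM(X,T)$ with $h_{\mu_k}(T)+\int\varphi\,\mathrm{d}\mu_k > b - 1/k$ and set $\widetilde\mu_k := (1-\lambda_k)\mu_k + \lambda_k \nu_\ast$ for parameters $\lambda_k \in (0,1)$ to be chosen. Affinity of the entropy function and linearity of integration yield $h_{\widetilde\mu_k}(T) \geq \lambda_k h_{\nu_\ast}(T) > 0$, and the deficit
\[
\rho_k := b - h_{\widetilde\mu_k}(T) - \int\varphi\,\mathrm{d}\widetilde\mu_k \leq (1-\lambda_k)/k + \lambda_k\bigl(b - h_{\nu_\ast}(T) - \textstyle\int\varphi\,\mathrm{d}\nu_\ast\bigr)
\]
can be driven to zero along suitable sequences $(k,\lambda_k)$. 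Because $h_{\widetilde\mu_k}(T)>0$, the measure $\widetilde\mu_k$ lies outside the compact convex set $\cM_{max}(\varphi)$; applying Hahn-Banach separation in the weak-$\ast$ topology on $C(X)^\ast$ (together with compactness of $\cM_{max}(\varphi)$ for strictness), we obtain $g_k \in C(X)$ and $\eta_k>0$ with $\int g_k\,\mathrm{d}\widetilde\mu_k = 1$ and $\int g_k\,\mathrm{d}\mu \leq -\eta_k$ for every $\mu \in \cM_{max}(\varphi)$.

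\textbf{The estimate and the main obstacle.} Define $\psi_t := \varphi + t g_k$. The variational principle applied at $\widetilde\mu_k$ gives $P(T,\psi_t) \geq h_{\widetilde\mu_k}(T) + \int\varphi\,\mathrm{d}\widetilde\mu_k + t = b - \rho_k + t$. For $t$ sufficiently small any maximizer $\mu_t$ of $\int\psi_t\,\mathrm{d}\mu$ must accumulate only on $\cM_{max}(\varphi)$ (a standard compactness argument using continuity of $\mu\mapsto \int\varphi\,\mathrm{d}\mu$), so by continuity of $\mu\mapsto \int g_k\,\mathrm{d}\mu$ we have $\int g_k\,\mathrm{d}\mu_t \leq -\eta_k/2$, whence $\beta(\psi_t) \leq b - t\eta_k/2$. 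Combining,
\[
P(T,\psi_t) - \beta(\psi_t) \geq -\rho_k + t\bigl(1 + \eta_k/2\bigr),
\]
which is positive as soon as $t > \rho_k/(1+\eta_k/2)$. To simultaneously satisfy $\|\psi_t - \varphi\| = t\|g_k\| < \epsilon$ we need $t < \epsilon/\|g_k\|$, so the argument closes provided $\rho_k\|g_k\|<\epsilon(1+\eta_k/2)$. The \emph{main obstacle} is exactly this parameter balance: as $\widetilde\mu_k$ drifts toward $\cM_{max}(\varphi)$ (which inevitably happens if all approximate equilibrium states do), the separating $g_k$ returned by Hahn-Banach may have its norm blow up or its gap $\eta_k$ shrink. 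The plan is to exploit the flexibility in choosing $k$ and $\lambda_k$ so that the deficit $\rho_k$ decays strictly faster than $\|g_k\|/\eta_k$ degenerates; the entropy-dense hypothesis enters at this step by guaranteeing an ergodic $\nu_\ast$ of fixed positive entropy bounded away from $\cM_{max}(\varphi)$ (since $\cM_{max}(\varphi)$ sits inside zero-entropy measures), which anchors the Hahn-Banach construction uniformly in $\lambda_k$.
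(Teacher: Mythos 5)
Your openness argument is correct, and in fact cleaner than the paper's: the functionals $\varphi\mapsto P(T,\varphi)$ and $\varphi\mapsto\beta(\varphi)$ are indeed $1$-Lipschitz, so $\Upsilon$ is the preimage of $(0,+\infty)$ under a continuous map. (The paper proves openness by a more hands-on $\varepsilon$-argument that reuses entropy-density, which it does not actually need for this half.)

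Your density argument has a genuine gap, and you identify it yourself in the last paragraph without closing it. The issue is that a raw Hahn--Banach separation of $\widetilde\mu_k$ from the compact convex set $\cM_{max}(\varphi)$ gives you a functional $g_k$ with no quantitative control: there is no mechanism in your construction that forces the product $\rho_k\,\|g_k\|/(1+\eta_k/2)$ below $\varepsilon$. In fact driving $\rho_k\to 0$ essentially forces $\lambda_k\to 0$ (unless $\nu_\ast$ happens to be an equilibrium state), so $\widetilde\mu_k$ inherits the behaviour of the approximate equilibrium states $\mu_k$, which you cannot keep uniformly separated from $\cM_{max}(\varphi)$; and even if you could bound the weak-$\ast$ distance below, Hahn--Banach gives no bound on $\|g_k\|/\eta_k$. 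The assertion that ``the plan is to exploit the flexibility'' is not a proof, and the obstruction is real, not merely technical.

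The tool that produces exactly the controlled perturbation you are missing is the Bishop--Phelps theorem, already set up in Section~3 of the paper: Lemma~\ref{lem3.7} (via Lemma~\ref{lem3.2} and Lemma~\ref{lem3.5}) says that if $\Lambda$ is compact convex and $\nu\in\Lambda^\star$ satisfies $\beta^\Lambda(f)-\int f\,\mathrm{d}\nu<\varepsilon$, then one can find $g$ with $\|f-g\|<\varepsilon$ and $\cM_{max}^\Lambda(g)=\{\nu\}$ --- the quantitative control that your naive separation lacks. The paper's density argument is then short: since $h_{\topp}(T)>0$ and $\cM(X,T)$ is entropy-dense, the set $\cK:=\{\mu\in\cM^e(X,T)\mid h_\mu(T)>0\}$ is dense in $\cM(X,T)$; taking $\Lambda=\cM(X,T)$ (so $\Lambda^\star=\cM^e(X,T)$) and applying Theorem~\ref{theorem-1}(3) with this $\cK$, the set of $f$ for which $\cM_{max}(f)$ is a singleton $\{\nu\}\subset\cK$ is dense in $C(X)$; for such $f$, $P(T,f)\geq h_\nu(T)+\int f\,\mathrm{d}\nu>\beta(f)$, so $f\in\Upsilon$. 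To repair your proof, replace the ad hoc Hahn--Banach step with a direct appeal to Lemma~\ref{lem3.7} (or Theorem~\ref{theorem-1}(3)), which is precisely what makes the parameter balance come for free.
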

\begin{proof}
	Since $h_{top}(T)>0$, the set $\{\mu\in\cM(X,T)\mid h_\mu(T)>0\}$ is dense in $\cM(X,T)$. Since $\cM(X,T)$ is entropy-dense, the set $\mathcal{K}:=\{\mu\in\cM^e(X,T)\mid h_\mu(T)>0\}$ is dense in $\cM(X,T)$. Denote $\Lambda=\cM(X,T)$, then $\hat{\Lambda}=\cM(X,T)$ and $\Lambda^\star=\cM^e(X,T)$. By Theorem \ref{theorem-1} (3), the set $K:=\{f\in\mathcal{U}^\Lambda_{C(X)}\mid\cM^\Lambda_{max}(f)\subset\mathcal{K}\}$ is dense in $C(X)$. Hence, $\Upsilon$ is dense in $C(X)$.
	
	Given $\varphi\in\Upsilon$, denote $a_\varphi:=P(T,\varphi)-\sup\limits_{\mu\in\cM(X,T)}\int \varphi\mathrm{d\mu}>0$. Then, there exists an open subset $U_\varphi$ such that $\varphi\in U_\varphi$ and $\|\varphi-\phi\|\leq\frac{1}{4}a_\varphi$ for any $\phi\in U_\varphi$. Choose $\mu_\varphi\in\cM(X,T)$ such that $h_{\mu_\varphi}(T)+\int\varphi\mathrm{d}\mu_\varphi>\sup\limits_{\mu\in\cM(X,T)}\int \varphi\mathrm{d\mu}+\frac{3}{4}a_\varphi$. Given $\phi\in U_\varphi$, since $\cM(X,T)$ is entropy-dense, there exists $\nu_\varphi\in\cM^e(X,T)$ such that $h_{\nu_\varphi}(T)+\int\varphi\mathrm{d}\nu_\varphi>h_{\mu_\varphi}(T)+\int\varphi\mathrm{d}\mu_\varphi-\frac{1}{4}a_\varphi>\sup\limits_{\mu\in\cM(X,T)}\int \varphi\mathrm{d\mu}+\frac{1}{2}a_\varphi\geq\sup\limits_{\mu\in\cM(X,T)}\int \phi\mathrm{d\mu}+\frac{1}{4}a_\varphi$. Hence, $P(T,\phi)\geq h_{\nu_\varphi}(T)+\int\phi\mathrm{d}\nu_\varphi\geq h_{\nu_\varphi}(T)+\int\varphi\mathrm{d}\nu_\varphi-\frac{1}{4}a_\varphi>\sup\limits_{\mu\in\cM(X,T)}\int \phi\mathrm{d\mu}$. Hence, $U_\varphi\subset\Upsilon$ and $\Upsilon$ is open in $C(X)$.  
\end{proof}
It can be checked that $\Delta_{c}\cap\cM^e(X,T)\subset\Lambda_{c}^{\star}$. Hence, $\Lambda_c^{\star}$ is residual in $\Lambda_c$. Therefore, Theorem \ref{theorem 3.1} is directly from Corollary \ref{corollary-1}.

\subsection{Proofs of Theorem \ref{theorem-1}, Corollary \ref{corollary-3} and Corollary \ref{corollary-1}.}
\begin{lemma}\label{lem3.1}
	Suppose that $\{f_n\}_{n=1}^{+\infty}$ is a convergent sequence of $C(X)$ with $\lim\limits_{n\to+\infty}\|f_n-f\|=0$ and $f\in C(X)$. For each $n\geq1$, $\mu_{n}\in\cM^\Lambda_{max}(f_n)$ and $\mu\in\Lambda$ is a limit point of $\{\mu_{n}\}$. Then $\mu\in\cM^\Lambda_{max}(f)$.
\end{lemma}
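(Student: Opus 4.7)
The plan is to exploit joint continuity of the pairing $(g,\nu)\mapsto\int g\,d\nu$ along the given data, combined with the optimality of each $\mu_n$ for $f_n$. First I would pass to a subsequence $\{\mu_{n_k}\}$ that actually converges weak$^\ast$ to $\mu$ (this subsequence exists by the definition of limit point, and its limit lies in $\Lambda$ since $\Lambda$ is compact and $\mu$ is given to belong to $\Lambda$).

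Next, the key estimate is that $\int f_{n_k}\,d\mu_{n_k}\to\int f\,d\mu$. This comes from the standard split
\[
\left|\int f_{n_k}\,d\mu_{n_k}-\int f\,d\mu\right|\;\le\;\left|\int (f_{n_k}-f)\,d\mu_{n_k}\right|+\left|\int f\,d\mu_{n_k}-\int f\,d\mu\right|\;\le\;\|f_{n_k}-f\|+\left|\int f\,d\mu_{n_k}-\int f\,d\mu\right|,
\]
where the first summand vanishes by the hypothesis $\|f_n-f\|\to 0$ and the second by weak$^\ast$ convergence $\mu_{n_k}\to\mu$ applied to the fixed test function $f\in C(X)$.

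Now I would use optimality: for every $\nu\in\Lambda$ and every $k$, $\mu_{n_k}\in\cM^\Lambda_{max}(f_{n_k})$ gives $\int f_{n_k}\,d\mu_{n_k}\ge\int f_{n_k}\,d\nu$. The right-hand side converges to $\int f\,d\nu$ by uniform convergence $f_{n_k}\to f$ (since $\nu$ is a probability measure, $|\int f_{n_k}\,d\nu-\int f\,d\nu|\le\|f_{n_k}-f\|\to 0$). Passing to the limit in $k$ on both sides and using the previous step yields $\int f\,d\mu\ge\int f\,d\nu$ for every $\nu\in\Lambda$. Since $\mu\in\Lambda$, this is exactly the statement that $\mu\in\cM^\Lambda_{max}(f)$.

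There is really no serious obstacle here: the argument is a routine upper-semicontinuity (in fact continuity) property of the maximization map $f\mapsto\cM^\Lambda_{max}(f)$, and the only place one must be slightly careful is in not confusing ``$\mu_{n_k}$ varies while the integrand $f_{n_k}$ also varies,'' which is handled cleanly by the triangle inequality above. The hypothesis that $\Lambda$ is (nonempty and) compact is implicit in the statement via $\mu\in\Lambda$; no additional structural assumption on $\Lambda$ (convexity, face property, etc.) is needed for this lemma.
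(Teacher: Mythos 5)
Your proof is correct and follows essentially the same route as the paper: pass to a convergent subsequence, control $\int f_{n_k}\,d\mu_{n_k}\to\int f\,d\mu$ by the triangle-inequality split, and pass to the limit in the optimality inequality $\int f_{n_k}\,d\mu_{n_k}\ge\int f_{n_k}\,d\nu$. Your write-up is in fact a bit cleaner notationally than the paper's, which has some typographical mixing of the indices $n$ and $n_i$.
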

\begin{proof}
	Suppose that $\mu=\lim\limits_{i\to+\infty}\mu_{n_i}$. Given $\nu\in\Lambda$, since $\int f_n\mathrm{d\nu}\leq\int f_n\mathrm{d}\mu_{n_i}$ and $|\int f_n\mathrm{d}\mu_{n_i}-\int f\mathrm{d\mu}|\leq|\int f_n\mathrm{d}\mu_{n_i}-\int f\mathrm{d}\mu_{n_i}|+|\int f\mathrm{d}\mu_n-\int f\mathrm{d\mu}|$, we have $\int f\mathrm{d\nu}=\lim\limits_{n\to+\infty}\int f_n\mathrm{d\nu}\leq\lim\limits_{n\to+\infty}\int f_n\mathrm{d}\mu_{n_i}=\int f\mathrm{d\mu}$. Hence, $\mu\in\cM^\Lambda_{max}(f)$.
\end{proof}
\begin{lemma}\cite[Theorem 1]{J2006ii}\label{lem3.6}
	For any $\mu\in\cM^e(X,T)$, there exists $h\in C(X)$ such that $\cM_{max}(h)=\{\mu\}$.
\end{lemma}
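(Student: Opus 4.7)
The plan is to construct $h\in C(X)$ explicitly by a Birkhoff-averaging device calibrated to $\mu$. Since $\mu$ is ergodic, the $L^2$ mean ergodic theorem gives, for every $f\in C(X)$, that $\tfrac{1}{N}S_Nf\to\int f\,\mathrm{d}\mu$ in $L^2(\mu)$. I would fix a countable dense sequence $\{f_n\}_{n\ge 1}$ in the unit ball of $C(X)$, set $a_n:=\int f_n\,\mathrm{d}\mu$, and inductively choose integers $N_n$ so that $\operatorname{Var}_\mu\bigl(\tfrac{1}{N_n}S_{N_n}f_n\bigr)$ is extremely small compared with $2^{-n}$.

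Then I would define
\[
h(x):=-\sum_{n=1}^{\infty}2^{-n}\Bigl(\tfrac{1}{N_n}S_{N_n}f_n(x)-a_n\Bigr)^2,
\]
which converges uniformly (each summand is bounded by $4\cdot 2^{-n}$) and hence defines a continuous function on $X$. For any $\nu\in\cM(X,T)$, the $T$-invariance of $\nu$ gives $\int\tfrac{1}{N_n}S_{N_n}f_n\,\mathrm{d}\nu=\int f_n\,\mathrm{d}\nu$, so a direct expansion yields
\[
\int h\,\mathrm{d}\nu=-\sum_{n=1}^{\infty}2^{-n}\Bigl[\operatorname{Var}_\nu\bigl(\tfrac{1}{N_n}S_{N_n}f_n\bigr)+\Bigl(\int f_n\,\mathrm{d}\nu-a_n\Bigr)^2\Bigr].
\]
Setting $\nu=\mu$ makes the second square vanish and reduces the first bracket to essentially zero by the choice of $N_n$, so $\int h\,\mathrm{d}\mu$ is within a prescribed tolerance of $0$. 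For any $\nu\ne\mu$, the density of $\{f_n\}$ in $C(X)$ produces some index $k$ with $\int f_k\,\mathrm{d}\nu\ne a_k$, contributing a strictly negative penalty term beyond what appears at $\mu$.

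The main obstacle is arranging the construction so that this penalty uniformly dominates the residual variance contribution at $\mu$. I would handle it by stratifying $\cM(X,T)\setminus\{\mu\}$ via the compact sets $K_m:=\{\nu:\varrho(\nu,\mu)\ge 1/m\}$: for each $m$, compactness of $K_m$ together with density of $\{f_n\}$ and weak$^*$-continuity of $\nu\mapsto\int f_n\,\mathrm{d}\nu$ produces an integer $N(m)$ and a constant $c_m>0$ such that $\max_{k\le N(m)}\bigl|\int f_k\,\mathrm{d}\nu-a_k\bigr|\ge c_m$ for every $\nu\in K_m$ (else one could extract a limit equal to $\mu$). A diagonal choice of the $N_n$'s, forcing $\operatorname{Var}_\mu\bigl(\tfrac{1}{N_n}S_{N_n}f_n\bigr)$ to shrink faster than all of the quantities $2^{-N(m)}c_m^2$, then yields the strict inequality $\int h\,\mathrm{d}\nu<\int h\,\mathrm{d}\mu$ for every $\nu\ne\mu$, giving $\cM_{\max}(h)=\{\mu\}$. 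This adaptive balancing is the technical heart of the argument and is precisely what is carried out in Jenkinson's original paper.
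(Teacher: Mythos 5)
Your construction runs into a quantitative obstruction that the stratification argument does not overcome. Write $g_n:=\tfrac{1}{N_n}S_{N_n}f_n$ and $b_n(\nu):=\int f_n\,\mathrm{d}\nu-a_n$. Expanding $\int(g_n-a_n)^2\,\mathrm{d}\nu=\operatorname{Var}_\nu(g_n)+b_n(\nu)^2$ gives
\[
\int h\,\mathrm{d}\mu=-A\quad\text{with}\quad A:=\sum_{n\ge1}2^{-n}\operatorname{Var}_\mu(g_n),
\]
\[
\int h\,\mathrm{d}\nu=-\sum_{n\ge1}2^{-n}\operatorname{Var}_\nu(g_n)-\sum_{n\ge1}2^{-n}b_n(\nu)^2.
\]
Thus $\cM_{\max}(h)=\{\mu\}$ amounts to the strict inequality $\sum 2^{-n}\operatorname{Var}_\nu(g_n)+\sum 2^{-n}b_n(\nu)^2>A$ for every $\nu\neq\mu$. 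The only universal lower bound available for the first sum is $\operatorname{Var}_\nu(g_n)\ge0$, so the argument must rely on the penalty term $\sum 2^{-n}b_n(\nu)^2$ to exceed $A$. But $A$ is a fixed strictly positive number once the $N_n$ are chosen (the mean ergodic theorem only gives $\operatorname{Var}_\mu(g_n)\to0$ along $N_n\to\infty$, not $\operatorname{Var}_\mu(g_n)=0$ for finite $N_n$, unless $f_n$ is $\mu$-a.e. cohomologous to a constant), whereas $\sum 2^{-n}b_n(\nu)^2\to0$ as $\nu\to\mu$. Your stratification produces lower bounds $2^{-N(m)}c_m^2$ on the penalty for $\nu\in K_m$, but these thresholds tend to $0$ as $m\to\infty$, so requiring $A$ to lie below all of them forces $A=0$, which is not attainable. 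Concretely, for $\nu$ in a small punctured weak$^\ast$-neighbourhood of $\mu$ with $\operatorname{Var}_\nu(g_n)<\operatorname{Var}_\mu(g_n)$, one can have $\int h\,\mathrm{d}\nu>\int h\,\mathrm{d}\mu$; the quadratic penalty $b_n(\nu)^2$ vanishes faster than the (order-one, in $b_n$) fluctuation of the variance terms, so nothing forces the required inequality.

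The deeper issue is structural: $-h=\sum 2^{-n}(g_n-a_n)^2$ is a nonnegative function, and for $\mu$ to maximize $\int h$ you would need $\mu$ to minimize $\int(-h)$; since $\int(-h)\,\mathrm{d}\mu=A>0$ in general, there is no reason $\mu$ is even a minimizer, let alone the unique one. The construction would work cleanly only if one could arrange $\mu(\{x:g_n(x)=a_n\ \forall n\})=1$, i.e.\ $A=0$; with that extra property, $h\le0$ with $\int h\,\mathrm{d}\mu=0$, and $\int h\,\mathrm{d}\nu=0$ would force $\int f_n\,\mathrm{d}\nu=a_n$ for all $n$ and hence $\nu=\mu$. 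Achieving this vanishing, rather than merely making $A$ small, is the genuine technical content of Jenkinson's theorem and is not delivered by the Birkhoff-averaging and stratification steps you describe.
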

Let's finish the proofs of Theorem \ref{theorem-1} (1) and (2). 
\begin{proof}[Proof of Theorem \ref{theorem-1} (1)]
	It's enough to show that $C(X)\setminus H$ is closed in $C(X)$. When $\mathcal{H}=\emptyset$, $H=\emptyset$. When $\mathcal{H}=\Lambda$, $H=C(X)$. Now, suppose that $\mathcal{H}\notin\{\emptyset,\Lambda\}$ and $\{f_n\}_{n=1}^{+\infty}$ is a convergent sequence of $C(X)\setminus H$ with $\lim\limits_{n\to+\infty}\|f_n-f\|=0$ and $f\in C(X)$. Choose $\mu_{n}\in\cM^\Lambda_{max}(f_n)\cap(\Lambda\setminus\mathcal{H})$ for each $n\geq1$. Since $\Lambda\setminus\mathcal{H}$ is compact, there is a subsequence $\{\mu_{n_i}\}$ of $\{\mu_{n}\}$ with $\lim\limits_{i\to+\infty}\mu_{n_i}=\mu\in\Lambda\setminus\mathcal{H}$. By Lemma \ref{lem3.1}, $\mu\in\cM^\Lambda_{max}(f)$ and thus $\mu\in\cM^\Lambda_{max}(f)\cap(\Lambda\setminus\mathcal{H})$. Hence $f\in C(X)\setminus H$.
\end{proof}
\begin{proof}[Proof of Theorem \ref{theorem-1} (2)]
	It's enough to show that $\mathcal{I}\cap\mathcal{V}\neq\emptyset$ for any nonempty open subset $\mathcal{V}$ of $\Lambda$. By Theorem \ref{theorem-1} (1), $V:=\{f\in C(X)\mid \cM^\Lambda_{max}(f)\subset\mathcal{V}\}$ is open in $C(X)$. Since $\overline{\Lambda\cap\cM^e(X,T)}=\Lambda$, there is $\mu\in\mathcal{V}\cap\Lambda\cap\cM^e(X,T)$. By Lemma \ref{lem3.6}, there is $g\in C(X)$ with $\cM^\Lambda_{max}(g)=\cM_{max}(g)=\{\mu\}$. Hence, $g\in V$ and $V\neq\emptyset$. Since $I$ is dense in $C(X)$, we have $I\cap V\neq\emptyset$. As a result, we have $\mathcal{I}\cap\mathcal{V}\neq\emptyset$. 
\end{proof}
To prove Theorem \ref{theorem-1} (3) and (4), we need to use the Bishop Phelps theorem. 

Suppose that $V$ is a Banach space, $\Gamma:V\to\mathbb{R}$ is a convex and continuous functional on $V$. A bounded linear functional $F$ is said to be \emph{tangent} to $\Gamma$ at $\phi\in V$ if $F(\psi)\leq\Gamma(\phi+\psi)-\Gamma(\psi)$ for all $\psi\in V$. A bounded linear functional $F$ is \emph{bounded} by $\Gamma$ if $F(\psi)\leq\Gamma(\psi)$ for all $\psi\in V$. The Bishop Phelps theorem is as following.
\begin{lemma}\cite[Theorem 3]{Shinoda2018}\label{lem3.2}
	Suppose that $\Gamma:V\to\mathbb{R}$ is a convex and continuous functional on a Banach space $V$. For every bounded linear functional $F_0$ bounded by $\Gamma$, $\phi_0\in V$ and $\varepsilon>0$, there exist a bounded linear functional $F$ and $\phi\in V$ such that $F$ is tangent to $\Gamma$ at $\phi$ and $$\|F_0-F\|\leq\varepsilon\text{ and } \|\phi_0-\phi\|\leq\frac{1}{\varepsilon}(\Gamma(\phi_0)-F_0(\phi_0)+s(F_0)),$$ where $s(F_0)=\sup\{F_0(\psi)-\Gamma(\psi)\mid \psi\in V\}$.
\end{lemma}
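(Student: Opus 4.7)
The plan is to derive the statement from Ekeland's variational principle applied to a suitable convex continuous function on $V$, followed by a one-line subdifferential computation. Set $g(\psi):=\Gamma(\psi)-F_0(\psi)$. Because $F_0$ is bounded by $\Gamma$ we have $g\geq 0$ on $V$, so $g$ is bounded below; and since $\Gamma$ is convex continuous and $F_0$ is linear continuous, $g$ is convex and continuous. A direct unpacking of the definition of $s(F_0)$ gives $\inf_{\psi\in V}g(\psi)=-s(F_0)$, whence $g(\phi_0)-\inf g=\Gamma(\phi_0)-F_0(\phi_0)+s(F_0)$. This is exactly the numerator of the required distance bound, and is non-negative by the very definition of $s(F_0)$.

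Next I would apply Ekeland's variational principle to $g$ on the Banach space $V$, with starting point $\phi_0$ and scaling parameter $\lambda=\varepsilon$. This produces $\phi\in V$ satisfying
$$\|\phi-\phi_0\|\leq \tfrac{1}{\varepsilon}\bigl(g(\phi_0)-\inf g\bigr)=\tfrac{1}{\varepsilon}\bigl(\Gamma(\phi_0)-F_0(\phi_0)+s(F_0)\bigr),$$
together with the variational inequality $g(\psi)+\varepsilon\|\psi-\phi\|\geq g(\phi)$ for all $\psi\in V$. The latter says that the perturbed function $\psi\mapsto g(\psi)+\varepsilon\|\psi-\phi\|$ attains its global minimum at $\psi=\phi$.

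From this minimum condition I would extract the tangent functional. Both $g$ and $\varepsilon\|\cdot-\phi\|$ are convex and everywhere continuous, so the Moreau--Rockafellar sum rule applies without any interiority obstruction, yielding $0\in\partial g(\phi)+\varepsilon\,B_{V^*}=(\partial\Gamma(\phi)-F_0)+\varepsilon\,B_{V^*}$, where $B_{V^*}$ is the closed unit ball of $V^*$ (which coincides with the subdifferential of the norm at the origin). Thus there exists $F\in\partial\Gamma(\phi)$ with $\|F-F_0\|\leq\varepsilon$. By the definition of the subdifferential, $F(\psi)\leq\Gamma(\phi+\psi)-\Gamma(\phi)$ for every $\psi\in V$, which is precisely the tangency of $F$ to $\Gamma$ at $\phi$.

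The only delicate step is the final one: it relies on the subdifferential sum rule together with the identification $\partial\|\cdot\|(0)=B_{V^*}$, both standard in convex analysis and essentially free here because every function involved is continuous on all of $V$. If one wished to avoid invoking Ekeland's principle, an alternative route is the classical Phelps cone-order argument on the epigraph of $g$ inside $V\times\mathbb{R}$, using the partial order $(\psi_1,t_1)\preceq(\psi_2,t_2)\iff t_2-t_1+\varepsilon\|\psi_1-\psi_2\|\leq 0$ and extracting a maximal element via Zorn's lemma; the same tangent functional with the same quantitative bounds can be read off from such a maximal element.
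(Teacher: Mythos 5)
Your proof is correct. The paper itself does not prove this lemma — it simply cites it as Theorem 3 of Shinoda's paper, which in turn is the classical Bishop--Phelps theorem — so there is no internal argument to compare against. Your route via Ekeland's variational principle (applied to the nonnegative convex function $g = \Gamma - F_0$, reading off $\inf g = -s(F_0)$, getting the distance bound $\|\phi - \phi_0\| \le \frac{1}{\varepsilon}(g(\phi_0) - \inf g)$, and then extracting $F \in \partial\Gamma(\phi)$ with $\|F - F_0\| \le \varepsilon$ from the Moreau--Rockafellar sum rule and $\partial\|\cdot\|(0) = B_{V^*}$) is the standard modern proof, and every step you invoke is legitimate here because $\Gamma$ (hence $g$) is finite-valued and continuous on all of $V$, so no qualification conditions are needed. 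One small point worth noting: the paper's displayed definition of tangency contains the typo $\Gamma(\phi+\psi)-\Gamma(\psi)$ where $\Gamma(\phi+\psi)-\Gamma(\phi)$ is intended; you correctly read it as the latter, which is exactly the subdifferential condition $F \in \partial\Gamma(\phi)$. The classical Bishop--Phelps proof — presumably the one in Shinoda's reference — is the Zorn's-lemma/cone-ordering argument on the epigraph that you sketch at the end; Ekeland's principle is the cleaner route, and the two are essentially equivalent in content since Ekeland's principle itself is typically established by that same order-theoretic device.
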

Given $\mu\in\cM(X)$, $f\mapsto\int f\mathrm{d\mu}$ is a bounded linear functional on $C(X)$, we also denote it by $\mu$ for simplicity.
\begin{lemma}\cite[Proposition 5]{Shinoda2018}\label{lem3.3}
	Suppose that $\Lambda$ is a nonempty and compact convex subset of $\cM(X,T)$, $F$ is a bounded linear functional on $C(X)$. For any $f\in C(X)$, $F\in\cM^\Lambda_{max}(f)$ if and only if it is tangent to $\beta^\Lambda$ at $f$.
\end{lemma}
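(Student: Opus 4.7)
The plan is to prove the two directions separately. The forward implication is an immediate consequence of the definitions, whereas the reverse implication is the substantive part and requires identifying the abstract tangent functional $F$ with an element of $\Lambda$, using the convex structure of $\Lambda$ together with a Hahn--Banach separation argument.

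For the forward direction, suppose $\mu\in\cM^\Lambda_{max}(f)$, so $\int f\,\mathrm{d}\mu=\beta^\Lambda(f)$. For any $g\in C(X)$, since $\mu\in\Lambda$, one has $\beta^\Lambda(f+g)\ge\int(f+g)\,\mathrm{d}\mu=\beta^\Lambda(f)+\int g\,\mathrm{d}\mu$, which rearranges to $\int g\,\mathrm{d}\mu\le\beta^\Lambda(f+g)-\beta^\Lambda(f)$. Hence $\mu$, viewed as a bounded linear functional on $C(X)$, is tangent to $\beta^\Lambda$ at $f$.

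For the reverse direction, I would first verify that the tangent functional $F$ is represented by a Borel probability measure. Taking $g\ge 0$ and noting that $\int(f-g)\,\mathrm{d}\nu\le\int f\,\mathrm{d}\nu$ for every $\nu\in\Lambda$ gives $\beta^\Lambda(f-g)\le\beta^\Lambda(f)$, so the tangent inequality applied to $-g$ yields $-F(g)\le 0$; thus $F$ is positive. Applying the tangent inequality to the constant functions $\pm c$ and using $\beta^\Lambda(f\pm c)=\beta^\Lambda(f)\pm c$ forces $F(\mathbf{1})=1$. The Riesz representation theorem then identifies $F$ with a Borel probability measure on $X$, which I continue to denote by $F$. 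To check $F$ is $T$-invariant, for any $g\in C(X)$ the function $g\circ T-g$ integrates to zero against every element of $\Lambda\subset\cM(X,T)$, so $\beta^\Lambda(f+g\circ T-g)=\beta^\Lambda(f)$, and the tangent inequality applied to $\pm(g\circ T-g)$ forces $F(g\circ T-g)=0$; hence $F\in\cM(X,T)$.

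The main step is to establish $F\in\Lambda$. If not, since $\Lambda$ is nonempty, compact and convex in the weak$^\ast$ topology of $\cM(X)$, the Hahn--Banach separation theorem produces $h\in C(X)$ and $\alpha\in\RR$ with $F(h)>\alpha\ge\int h\,\mathrm{d}\nu$ for every $\nu\in\Lambda$. Then for each $\lambda>0$ the pointwise bound $\int(f+\lambda h)\,\mathrm{d}\nu\le\int f\,\mathrm{d}\nu+\lambda\alpha$ on each $\nu\in\Lambda$ yields $\beta^\Lambda(f+\lambda h)\le\beta^\Lambda(f)+\lambda\alpha$, and tangency forces $\lambda F(h)\le\lambda\alpha$, contradicting $F(h)>\alpha$. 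With $F\in\Lambda$ secured, applying the tangent inequality to $g=-f$ gives $F(-f)\le\beta^\Lambda(0)-\beta^\Lambda(f)=-\beta^\Lambda(f)$, hence $F(f)\ge\beta^\Lambda(f)$, while $F\in\Lambda$ gives the reverse inequality; therefore $F\in\cM^\Lambda_{max}(f)$. I expect the Hahn--Banach step to be the most delicate point, since one must invoke the version that separates a point from a compact convex subset of a locally convex space by a functional lying in the predual $C(X)$; both the compactness and the convexity of $\Lambda$ are used essentially, and everything else reduces to bookkeeping with the tangent inequality on carefully chosen test functions.
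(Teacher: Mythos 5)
Your proof is correct, and in the key step it takes a genuinely different route from the paper. Where the paper first derives the inequality $F(g)\le\beta^\Lambda(g)$ for all $g$ (via subadditivity of $\beta^\Lambda$) and then invokes Ollagnier's Proposition 1.2.5 as a black box to conclude $F\in\Lambda$, you instead re-derive that membership from scratch by a Hahn--Banach separation in $(C(X)^*,\text{weak}^*)$: if $F\notin\Lambda$ you separate it from the compact convex set $\Lambda$ by some $h\in C(X)$, push $h$ through the tangent inequality at scale $\lambda$, and obtain a contradiction. This buys self-containedness at the cost of a few extra lines; the paper's version is shorter but leans on a cited duality result that is itself a packaged Hahn--Banach argument. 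Your preliminary steps establishing that $F$ is positive, normalized, and $T$-invariant (hence a measure in $\cM(X,T)$ by Riesz) are correct but redundant: once the separation argument shows $F\in\Lambda\subset\cM(X,T)$, those properties follow automatically, and the separation itself needs only that $F\in C(X)^*$, which is given. The closing step, testing tangency at $g=-f$ to force $F(f)\ge\beta^\Lambda(f)$ and combining with $F\in\Lambda$, is identical to the paper's.
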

\begin{proof}
	The third section of Shinoda's article is set in the context of Choquet simplex, but the proof of Lemma \ref{lem3.3} \cite[Proposition 5]{Shinoda2018} is still hold when $\Lambda$ is a nonempty and compact convex subset of $\cM(X,T)$. We include it here for the convenience of the reader.
	
	First of all, by the definition of $\beta^\Lambda$, $F\in\cM^\Lambda_{max}(f)$ implies it is tangent to $\beta^\Lambda$ at $f$. We will prove the opposite direction. Suppose that $F$ is tangent to $\beta^\Lambda$ at $f$. For any $g\in C(X)$, we have $$F(g)\leq\beta^\Lambda(f+g)-\beta^\Lambda(f)=\beta^\Lambda(g)+\beta^\Lambda(f+g)-\beta^\Lambda(g)-\beta^\Lambda(f)\leq\beta^\Lambda(g).$$ By \cite[Proposition 1.2.5]{Ollagnier1985}, this implies $F\in\Lambda$. Let $g=-f$, we have $$F(-f)\leq\beta^\Lambda(f-f)-\beta^\Lambda(f)=-\beta^\Lambda(f).$$ Since $F$ is linear, we have $\beta^\Lambda(f)\leq F(f)$. Hence, $F\in\cM^\Lambda_{max}(f)$.
\end{proof}
From Lemma \ref{lem3.2} and Lemma \ref{lem3.3}, let $V=C(X)$, $\Gamma=\beta^\Lambda$, $F_0=\mu\in\Lambda$, $\phi_0=f$, $F=\nu$, $\phi=g$ and $s(F_0)=s(\mu)=0$, we have
\begin{lemma}\label{lem3.4}
	Suppose that $\Lambda$ is a nonempty and compact convex subset of $\cM(X,T)$, $f\in C(X)$, $\mu\in\Lambda$ and $\varepsilon>0$. Then there exists $g\in C(X)$ and $\nu\in\cM^\Lambda_{max}(g)$ such that $|\int\psi\mathrm{d\mu}-\int\psi\mathrm{d\nu}|\leq\varepsilon\|\psi\|$ for any $\psi\in C(X)$ and $\|f-g\|\leq\frac{1}{\varepsilon}(\beta^\Lambda(f)-\int f\mathrm{d\mu})$.
\end{lemma}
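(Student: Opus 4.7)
The plan is to apply the Bishop--Phelps theorem (Lemma \ref{lem3.2}) with the identifications already suggested in the paragraph above the statement, namely $V=C(X)$, $\Gamma=\beta^\Lambda$, $F_0=\mu$ (viewed as a bounded linear functional on $C(X)$ via $\psi\mapsto\int\psi\,\mathrm{d}\mu$), and $\phi_0=f$. The conclusion will then be extracted by combining the Bishop--Phelps estimates with Lemma \ref{lem3.3}.

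First I would verify the hypotheses. The functional $\beta^\Lambda$ is convex (as a supremum of linear functionals over the convex set $\Lambda$) and continuous on $C(X)$ (it is $1$-Lipschitz with respect to $\|\cdot\|$, since $|\beta^\Lambda(\psi_1)-\beta^\Lambda(\psi_2)|\le\|\psi_1-\psi_2\|$). Since $\mu\in\Lambda$, for every $\psi\in C(X)$ we have $\int\psi\,\mathrm{d}\mu\le\beta^\Lambda(\psi)$, so $\mu$ is bounded by $\beta^\Lambda$ as a linear functional. Moreover,
\[
s(\mu)=\sup\Bigl\{\int\psi\,\mathrm{d}\mu-\beta^\Lambda(\psi):\psi\in C(X)\Bigr\}=0,
\]
because each term in the supremum is nonpositive and the value $0$ is attained at $\psi\equiv 0$. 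This is the key computation that makes the error term in the Bishop--Phelps conclusion collapse to the clean bound advertised in the lemma.

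Next I would invoke Lemma \ref{lem3.2} with these data to produce a bounded linear functional $\nu$ on $C(X)$ and a point $g\in C(X)$ such that $\nu$ is tangent to $\beta^\Lambda$ at $g$, with
\[
\|\mu-\nu\|\le\varepsilon\quad\text{and}\quad\|f-g\|\le\tfrac{1}{\varepsilon}\bigl(\beta^\Lambda(f)-\tint f\,\mathrm{d}\mu+s(\mu)\bigr).
\]
Using $s(\mu)=0$ gives the desired estimate on $\|f-g\|$, and the operator-norm bound $\|\mu-\nu\|\le\varepsilon$ rewrites as $|\int\psi\,\mathrm{d}\mu-\int\psi\,\mathrm{d}\nu|\le\varepsilon\|\psi\|$ for every $\psi\in C(X)$. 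Finally, because $\Lambda$ is a compact convex subset of $\cM(X,T)$ and $\nu$ is tangent to $\beta^\Lambda$ at $g$, Lemma \ref{lem3.3} yields $\nu\in\cM^\Lambda_{\max}(g)$ (in particular $\nu$ is automatically in $\Lambda$, so it is a bona fide invariant probability measure rather than a mere bounded linear functional). This completes the argument.

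The steps themselves are routine book-keeping once Lemmas \ref{lem3.2} and \ref{lem3.3} are in hand; the only conceptual point is the identification $s(\mu)=0$, which is exactly why taking the initial linear functional from $\Lambda$ (rather than from an arbitrary bounded linear functional) is the right move and yields the sharp constant $\frac{1}{\varepsilon}(\beta^\Lambda(f)-\int f\,\mathrm{d}\mu)$ on the right-hand side.
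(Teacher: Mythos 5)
Your argument is exactly the paper's own: the paper derives Lemma \ref{lem3.4} directly from Lemma \ref{lem3.2} and Lemma \ref{lem3.3} with precisely the substitutions $V=C(X)$, $\Gamma=\beta^\Lambda$, $F_0=\mu$, $\phi_0=f$, $F=\nu$, $\phi=g$, and the observation $s(\mu)=0$. Your verification that $\beta^\Lambda$ is convex and $1$-Lipschitz, that $\mu$ is bounded by $\beta^\Lambda$, that $s(\mu)=0$, and that tangency plus Lemma \ref{lem3.3} forces $\nu\in\cM^\Lambda_{\max}(g)\subset\Lambda$ is the correct and complete book-keeping.
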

\begin{lemma}\cite[Lemma 2.1]{M2010}\label{lem3.5}
	Suppose that $\mu\in\cM(X,T)$, $\nu\in\cM^e(X,T)$ and there is $\kappa<2$ such that $|\int\psi\mathrm{d\mu}-\int\psi\mathrm{d\nu}|\leq\kappa\|\psi\|$ for any $\psi\in C(X)$. Then there exist $\hat{\mu}\in\cM(X,T)$ and $0<\lambda<1$ such that $\mu=(1-\lambda)\hat{\mu}+\lambda\nu$. 
\end{lemma}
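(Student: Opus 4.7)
The plan is to decompose $\mu$ using the set of $\nu$-generic points and to use the hypothesis $\kappa<2$ to guarantee that this decomposition is nontrivial. First I would translate the hypothesis into the total variation bound $\|\mu-\nu\|_{TV}\leq\kappa<2$, where $\|\cdot\|_{TV}$ denotes the total variation norm of a signed Borel measure on $X$. This is immediate because on a compact metric space the total variation equals the supremum of $|\int\psi\,d(\mu-\nu)|$ over $\psi\in C(X)$ with $\|\psi\|\leq1$.

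Next, I would define
\[
B:=\Bigl\{x\in X : \frac{1}{n}\sum_{k=0}^{n-1}\delta_{T^k x}\xrightarrow[n\to\infty]{w^\ast}\nu\Bigr\}.
\]
Since $\nu$ is ergodic and $C(X)$ is separable, Birkhoff's theorem yields $\nu(B)=1$, and $B$ is clearly $T$-invariant; so $\mu|_B$ and $\sigma:=\mu|_{X\setminus B}$ are both $T$-invariant measures, mutually singular. The central step is the identification $\mu|_B=\lambda\nu$ where $\lambda:=\mu(B)$: assuming $\lambda>0$, the probability measure $\eta:=\lambda^{-1}\mu|_B$ is $T$-invariant and concentrated on $B$, so for every $f\in C(X)$, the $T$-invariance of $\eta$ together with dominated convergence give
\[
\int f\,d\eta = \int \frac{1}{n}\sum_{k=0}^{n-1}f(T^k x)\,d\eta(x) \xrightarrow[n\to\infty]{} \int f\,d\nu,
\]
since the Birkhoff averages converge pointwise to $\int f\,d\nu$ on $B$ and are uniformly bounded by $\|f\|$. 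Hence $\eta=\nu$, so $\mu=\lambda\nu+\sigma$ with $\sigma\perp\nu$ and $\sigma(X)=1-\lambda$.

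Finally, by the mutual singularity of $\sigma$ and $(1-\lambda)\nu$,
\[
\|\mu-\nu\|_{TV}=\|\sigma-(1-\lambda)\nu\|_{TV}=\sigma(X)+(1-\lambda)=2(1-\lambda),
\]
so $2(1-\lambda)\leq\kappa<2$ forces $\lambda>0$. If $\lambda=1$ then $\mu=\nu$ and the conclusion holds with any $\lambda'\in(0,1)$ and $\hat\mu:=\nu$; otherwise, $\hat\mu:=(1-\lambda)^{-1}\sigma$ is a $T$-invariant probability measure and $\mu=(1-\lambda)\hat\mu+\lambda\nu$ as required. The main obstacle is the identification $\mu|_B=\lambda\nu$: ergodicity of $\nu$ enters through Birkhoff's theorem to ensure $\nu(B)=1$, and the pointwise convergence on $B$ then has to be promoted to the measure-theoretic equality $\eta=\nu$ using the $T$-invariance of $\eta$ and dominated convergence.
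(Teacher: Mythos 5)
The paper cites this lemma from Morris's 2010 paper without reproducing the proof, so there is no in-paper argument to compare against. Your proof is correct and complete: translating the hypothesis into the total-variation bound $\|\mu-\nu\|_{TV}\leq\kappa<2$, decomposing $\mu$ along the fully $T$-invariant Borel set $B$ of $\nu$-generic points, identifying $\mu|_B = \mu(B)\,\nu$ via Birkhoff averaging and dominated convergence, and then reading off $\|\mu-\nu\|_{TV} = 2(1-\mu(B))$ from the mutual singularity of $\sigma = \mu|_{X\setminus B}$ and $\nu$ — all the steps are sound, including the check that $B=T^{-1}B$ and the separate handling of the degenerate case $\mu(B)=1$. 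Morris's own route is to apply the Lebesgue decomposition $\mu=\mu_{ac}+\mu_s$ with respect to $\nu$, observe that by uniqueness both pieces are $T$-invariant, and use ergodicity of $\nu$ to conclude that the (necessarily $T$-invariant) Radon--Nikodym derivative $d\mu_{ac}/d\nu$ is $\nu$-a.e.\ constant, so $\mu_{ac}=\lambda\nu$; the total-variation computation is then identical. Your generic-point argument is the concrete, pointwise-ergodic-theorem face of the same decomposition: it buys a more hands-on identification of the $\nu$-component without invoking the Radon--Nikodym theorem, at the mild cost of needing separability of $C(X)$ to build $B$, whereas Morris's version is a touch more abstract but slightly shorter. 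Either way the heart of the matter — ergodicity of $\nu$ forcing the non-singular part to be a scalar multiple of $\nu$, plus $\kappa<2$ ruling out $\mu\perp\nu$ — is the same.
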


\begin{lemma}\label{lem3.7}
	Suppose that $\Lambda$ is a nonempty and compact convex subset of $\cM(X,T)$. Let $f\in C(X)$ and $\varepsilon>0$. If $\nu\in\Lambda^{\star}$ with $\beta^\Lambda(f)-\int f\mathrm{d\nu}<\varepsilon$, then there exists $g\in C(X)$ such that $\|f-g\|<\varepsilon$ and $\cM^\Lambda_{max}(g)=\{\nu\}$.
\end{lemma}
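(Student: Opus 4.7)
The plan is to first produce a $g_0\in C(X)$ close to $f$ that has $\nu$ as \emph{some} maximising measure, and then perturb by a function that is strictly maximised only at $\nu$ (provided by Lemma \ref{lem3.6}) to force uniqueness without destroying the norm bound.

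First, set $\delta:=\beta^\Lambda(f)-\int f\,\mathrm{d}\nu$, which by hypothesis satisfies $0\leq\delta<\varepsilon$. Choose $\eta\in(\delta/\varepsilon,\,2)$; this is a nonempty interval since $\delta/\varepsilon<1<2$. Apply Lemma \ref{lem3.4} with this $\eta$, $\mu=\nu$, and the given $f$ to obtain $g_0\in C(X)$ and $\nu'\in\cM^\Lambda_{max}(g_0)$ such that
\[
\Bigl|\int\psi\,\mathrm{d}\nu-\int\psi\,\mathrm{d}\nu'\Bigr|\leq\eta\|\psi\|\quad\text{for all }\psi\in C(X),
\]
and $\|f-g_0\|\leq\delta/\eta<\varepsilon$. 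Set $\delta_0:=\varepsilon-\|f-g_0\|>0$; this slack will absorb the forthcoming perturbation.

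Next, since $\nu\in\Lambda^\star\subset\cM^e(X,T)$ and $\eta<2$, Lemma \ref{lem3.5} applied to the pair $(\nu',\nu)$ yields $\hat\mu\in\cM(X,T)$ and $\lambda\in(0,1)$ with $\nu'=(1-\lambda)\hat\mu+\lambda\nu$. Because $\nu\in\hat\Lambda$ and $\nu'\in\Lambda$, the defining property of $\hat\Lambda$ forces $\hat\mu\in\Lambda$. Using $\nu'\in\cM^\Lambda_{max}(g_0)$ and evaluating $\beta^\Lambda(g_0)=\int g_0\,\mathrm{d}\nu'=(1-\lambda)\int g_0\,\mathrm{d}\hat\mu+\lambda\int g_0\,\mathrm{d}\nu$, both integrals on the right must equal $\beta^\Lambda(g_0)$, so in particular $\nu\in\cM^\Lambda_{max}(g_0)$.

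Finally, apply Lemma \ref{lem3.6} to obtain $h\in C(X)$ with $\cM_{max}(h)=\{\nu\}$, and define $g:=g_0+\alpha h$ with $0<\alpha<\delta_0/(1+\|h\|)$. Then $\|f-g\|\leq\|f-g_0\|+\alpha\|h\|<\varepsilon$. To see that $\cM^\Lambda_{max}(g)=\{\nu\}$, take any $\rho\in\cM^\Lambda_{max}(g)$. Since $\nu\in\Lambda$, we have $\int g\,\mathrm{d}\rho\geq\int g\,\mathrm{d}\nu$; and since $\nu\in\cM^\Lambda_{max}(g_0)$ and $\rho\in\Lambda$, we have $\int g_0\,\mathrm{d}\nu\geq\int g_0\,\mathrm{d}\rho$. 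Subtracting gives
\[
\alpha\Bigl(\int h\,\mathrm{d}\rho-\int h\,\mathrm{d}\nu\Bigr)\geq\int g_0\,\mathrm{d}\nu-\int g_0\,\mathrm{d}\rho\geq 0,
\]
so $\int h\,\mathrm{d}\rho\geq\int h\,\mathrm{d}\nu$. Combined with $\cM_{max}(h)=\{\nu\}$, this forces $\rho=\nu$.

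The main obstacle is the passage from "$\nu$ is \emph{a} maximiser of $g_0$" to "$\nu$ is the \emph{unique} maximiser of some nearby $g$": the Bishop--Phelps step (Lemma \ref{lem3.4}) does not control uniqueness, and a direct convexity argument is blocked because $\cM^\Lambda_{max}(g_0)$ may be a large face of $\Lambda$. The hypothesis $\nu\in\Lambda^\star$ (ergodicity plus the $\hat\Lambda$-condition) is what allows Lemma \ref{lem3.5} to promote an approximation in weak$^*$ to an actual convex decomposition inside $\Lambda$, and then the tie-breaking perturbation by the $h$ from Lemma \ref{lem3.6} selects $\nu$ uniquely.
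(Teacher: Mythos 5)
Your proof is correct and follows essentially the same route as the paper's: apply the Bishop--Phelps consequence (Lemma \ref{lem3.4}) with $\nu$ to produce a nearby $g_0$ and a maximiser $\nu'$ of $g_0$ that is $\kappa$-close to $\nu$ with $\kappa<2$, invoke Lemma \ref{lem3.5} to get a convex decomposition $\nu'=(1-\lambda)\hat\mu+\lambda\nu$, use $\nu\in\Lambda^\star$ to place $\hat\mu$ in $\Lambda$ and conclude $\nu\in\cM^\Lambda_{max}(g_0)$, and then break ties by adding a small multiple of the function $h$ from Lemma \ref{lem3.6}. The only cosmetic difference is that you keep the Bishop--Phelps parameter $\eta$ flexible, whereas the paper fixes it at $4/3$ and gets $\|f-\tilde g\|<\tfrac34\varepsilon$ outright; both choices work.
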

\begin{proof}
	By Lemma \ref{lem3.4}, there exist $\tilde{g}\in C(X)$ and $\mu\in\cM^\Lambda_{max}(\tilde{g})$ such that $|\int\psi\mathrm{d\mu}-\int\psi\mathrm{d\nu}|\leq\frac{4}{3}\|\psi\|$ for any $\psi\in C(X)$ and $\|f-\tilde{g}\|<\frac{3}{4}\varepsilon$. By Lemma \ref{lem3.5}, there exist $\hat{\mu}\in\cM(X,T)$ and $0<\lambda<1$ such that $\mu=(1-\lambda)\hat{\mu}+\lambda\nu$. Since $\nu\in\Lambda^{\star}$, we have $\hat{\mu}\in\Lambda$. Hence, $\hat{\mu},\nu\in\cM^\Lambda_{max}(\tilde{g})$. By Lemma \ref{lem3.6}, there exists $\hat{g}\in C(X)$ such that $\cM^\Lambda_{max}(\hat{g})=\cM_{max}(\hat{g})=\{\nu\}$. Hence, $\cM^\Lambda_{max}(\tilde{g}+\delta\hat{g})=\{\nu\}$ for any $\delta>0$. As a result, there exist $\delta_0>0$ and $g=\tilde{g}+\delta_0\hat{g}$ such that $\|f-g\|<\varepsilon$ and $\cM^\Lambda_{max}(g)=\{\nu\}$.  
\end{proof}
Let's finish the proofs of Theorem \ref{theorem-1} (3) and (4).
\begin{proof}[Proof of Theorem \ref{theorem-1} (3)]
	Given $\varepsilon>0$, $f\in C(X)$. Choose $\mu\in\cM^\Lambda_{max}(f)$. Since $\Lambda^{\star}$ is dense in $\Lambda$ and $\mathcal{K}$ is a dense subset of $\Lambda^{\star}$, there exists $\nu\in\mathcal{K}$ such that $\beta^\Lambda(f)-\int f\mathrm{d\nu}=\int f\mathrm{d\mu}-\int f\mathrm{d\nu}<\varepsilon$. By Lemma \ref{lem3.7}, there exists $g\in C(X)$ such that $\|f-g\|<\varepsilon$ and $\cM^\Lambda_{max}(g)=\{\nu\}$ and thus $g\in K$.  	
\end{proof}
\begin{proof}[Proof of Theorem \ref{theorem-1} (4)]
	If $\mathcal{J}=\emptyset$, then $\emptyset$ is open in $\Lambda^{\star}$. Now, suppose that $\mathcal{J}\neq\emptyset$. Given $\mu\in\mathcal{J}$, there exists $f\in J$ such that $\mu\in\cM^\Lambda_{max}(f)$. Since $J$ is an open subset of $\mathcal{U}^\Lambda_{C(X)}$, there exists $\varepsilon>0$ such that $g\in J$ for any $g\in\mathcal{U}^\Lambda_{C(X)}$ with $\|f-g\|<\varepsilon$. Let $\mathcal{V}\subset\Lambda^{\star}$ be an open neighbourhood of $\mu$ small enough that $\beta^\Lambda(f)-\int f\mathrm{d\nu}<\varepsilon$ for any $\nu\in\mathcal{V}$. Given $\nu\in\mathcal{V}$, by Lemma \ref{lem3.7}, there exists $g\in C(X)$ such that $\|f-g\|<\varepsilon$ and $\cM^\Lambda_{max}(g)=\{\nu\}$. Hence $g\in J$ and $\mathcal{V}\subset\mathcal{J}$. As a result, $\mathcal{J}$ is open in $\Lambda^{\star}$.   
\end{proof}
Let's finish the proofs of Corollary \ref{corollary-3}.
\begin{proof}[Proof of Corollary \ref{corollary-3}]
	By Theorem \ref{theorem-1} (1), $L$ is open in $C(X)$. Since $\Lambda^{\star}$ is dense in $\Lambda$, $\mathcal{L}\cap\Lambda^{\star}$ is dense in $\Lambda^{\star}$. By Theorem \ref{theorem-1} (3), $\{f\in C(X)\mid \cM^\Lambda_{max}(f)\subset\mathcal{L}\cap\Lambda^{\star}\}$ is dense in $C(X)$. Hence, $L$ is dense in $C(X)$. 
\end{proof}
Let's finish the proofs of Corollary \ref{corollary-1} (1) and (2).
\begin{proof}[Proof of Corollary \ref{corollary-1} (1)]
	Suppose that $\mathcal{L}\supset\bigcap_{j=1}^{+\infty}\mathcal{L}_j$, where $\mathcal{L}_j$ is an open and dense subset of $\Lambda$. By Corollary \ref{corollary-3}, $L_j:=\{f\in C(X)\mid \cM^\Lambda_{max}(f)\subset\mathcal{L}_j\}$ is an open and dense subset of $C(X)$. Since $L\supset\mathcal{U}_{C(X)}^\Lambda\cap\bigcap_{j=1}^{+\infty}L_j$, we have that $L$ is residual in $C(X)$. 
\end{proof}
\begin{proof}[Proof of Corollary \ref{corollary-1} (2)]
	By Theorem \ref{the1.2}, $\mathcal{U}^\Lambda_{C(X)}$ is a dense $G_\delta$ subset of $C(X)$. Hence, $N$ is residual in $C(X)$. Suppose that $N\supset\bigcap_{j=1}^{+\infty}N_j$, where $N_j$ is an open and dense subset of $C(X)$. By Theorem \ref{theorem-1} (4), $\mathcal{N}_j:=\Lambda^{\star}\cap\bigcup\limits_{f\in N_j\cap\mathcal{U}^\Lambda_{C(X)}}\cM^\Lambda_{max}(f)$ is open in $\Lambda^{\star}$. By Theorem \ref{theorem-1} (3), $\{f\in\mathcal{U}^\Lambda_{C(X)}\mid \cM^\Lambda_{max}(f)\subset\Lambda^{\star}\}$ is dense in $C(X)$. As a result, $\tilde{N_j}:=N_j\cap\{f\in\mathcal{U}^\Lambda_{C(X)}\mid \cM^\Lambda_{max}(f)\subset\Lambda^{\star}\}$ is dense in $C(X)$. Since $\Lambda^{\star}\subset\Lambda\cap\cM^e(X,T)$, we have $\overline{\Lambda\cap\cM^e(X,T)}=\Lambda$, by Theorem \ref{theorem-1} (2), $\bigcup\limits_{f\in \tilde{N_j}}\cM^\Lambda_{max}(f)\subset\Lambda^{\star}$ is dense in $\Lambda$. Hence, $\mathcal{N}_j$ is dense in $\Lambda^{\star}$. Since $\mathcal{N}\cap\Lambda^{\star}\supset\bigcap_{j=1}^{+\infty}\mathcal{N}_j$, we have that $\mathcal{N}\cap\Lambda^{\star}$ is residual in $\Lambda^{\star}$. Hence, $\mathcal{N}$ is residual in $\Lambda$.
\end{proof}

\section{discrete-time systems: Proof of Theorem \ref{mt-1} and other examples}
\subsection{Proof of Theorem \ref{mt-1}}
    Denote $\cM_{FS}(X,T):=\{\mu\in\cM(X,T)\mid\supp{(\mu)}=X\}$ and $C_T(X):=\overline{\bigcup\limits_{\mu\in\cM(X,T)}\supp(\mu)}$ the \emph{measure center} of $(X,T)$, it was known that $C_T(X)=X$ if and only if $\cM_{FS}(X,T)\neq\emptyset$. 
\begin{lemma}\label{lem4.9}
	Suppose that $(X,T)$ is a dynamical system with $C_T(X)=X$. Given $\varphi\in C(X)$, then for any $\inf\limits_{\mu\in\cM(X,T)}(h_\mu(T)+\int\varphi\mathrm{d\mu})\leq c<P(T,\varphi)$, $\cM_{FS}(X,T)\cap\Lambda_c$ is residual in $\Lambda_c$.
\end{lemma}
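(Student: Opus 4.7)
The plan is to show that $\cM_{FS}(X,T)\cap\Lambda_c$ contains a dense $G_\delta$ subset of $\Lambda_c$. The $G_\delta$ portion is the easy half: fix a countable base $\{U_n\}_{n\geq 1}$ for the topology of $X$. A measure $\mu\in\cM(X,T)$ has full support if and only if $\mu(U_n)>0$ for every $n$. Since each $U_n$ is open, the map $\mu\mapsto\mu(U_n)$ is lower semicontinuous with respect to the weak$^*$ topology, so each set $\{\mu\in\cM(X,T):\mu(U_n)>0\}$ is open. Intersecting over $n$ shows $\cM_{FS}(X,T)$ is a $G_\delta$ subset of $\cM(X,T)$, and hence $\cM_{FS}(X,T)\cap\Lambda_c$ is a $G_\delta$ subset of $\Lambda_c$.

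For the density, the first step is to produce one distinguished full-support invariant measure $\nu^*$ satisfying $h_{\nu^*}(T)+\int\varphi\,\mathrm{d}\nu^*\geq c$. Since $c<P(T,\varphi)$, the variational principle for topological pressure furnishes some $\mu'\in\cM(X,T)$ with $h_{\mu'}(T)+\int\varphi\,\mathrm{d}\mu'>c$. The hypothesis $C_T(X)=X$ furnishes some $\nu\in\cM_{FS}(X,T)$. Setting $\nu_t:=(1-t)\nu+t\mu'$ for $t\in(0,1)$, one has $\supp(\nu_t)\supset\supp(\nu)=X$, so $\nu_t\in\cM_{FS}(X,T)$; by the affinity of the entropy function and the linearity of the integral, $h_{\nu_t}(T)+\int\varphi\,\mathrm{d}\nu_t\to h_{\mu'}(T)+\int\varphi\,\mathrm{d}\mu'>c$ as $t\to 1^-$, so choosing $t$ close enough to $1$ delivers the desired $\nu^*$.

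Finally, given an arbitrary $\mu\in\Lambda_c$, define $\mu_\lambda:=(1-\lambda)\mu+\lambda\nu^*$ for $\lambda\in(0,1]$. Then $\supp(\mu_\lambda)\supset\supp(\nu^*)=X$, so $\mu_\lambda\in\cM_{FS}(X,T)$, and by the affinity of entropy
\[
h_{\mu_\lambda}(T)+\int\varphi\,\mathrm{d}\mu_\lambda=(1-\lambda)\Bigl(h_\mu(T)+\int\varphi\,\mathrm{d}\mu\Bigr)+\lambda\Bigl(h_{\nu^*}(T)+\int\varphi\,\mathrm{d}\nu^*\Bigr)\geq(1-\lambda)c+\lambda c=c,
\]
so $\mu_\lambda\in\Lambda_c$. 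Since $\mu_\lambda\to\mu$ weakly$^*$ as $\lambda\to 0^+$, $\cM_{FS}(X,T)\cap\Lambda_c$ is dense in $\Lambda_c$, which together with the first paragraph finishes the argument. No step appears to pose a serious obstacle; the only mild care concerns the affinity identity when one of the entropies is infinite, but in that regime the inequality $\geq c$ becomes automatic.
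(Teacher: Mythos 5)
Your argument is correct and is essentially the same as the paper's: both produce one full-support measure in $\Lambda_c$ by convexly combining a full-support invariant measure (which exists since $C_T(X)=X$) with a measure whose free energy exceeds $c$, and both then obtain density by pushing an arbitrary $\mu\in\Lambda_c$ slightly toward that distinguished measure, while the complementary $G_\delta$/nowhere-dense structure comes in both cases from the lower semicontinuity of $\mu\mapsto\mu(U)$ over a countable base. The only cosmetic difference is that you phrase the topological half as ``$\cM_{FS}(X,T)$ is a $G_\delta$ and the intersection is dense,'' whereas the paper writes the complement $\Lambda_c\setminus\cM_{FS}(X,T)$ as the countable union of the closed nowhere-dense sets $\mathcal{D}(U_i)=\{\mu\in\Lambda_c\mid\mu(U_i)=0\}$; these are two standard ways of saying the same thing.
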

\begin{proof}
	First, we prove that $\cM_{FS}(X,T)\cap\Lambda_c\neq\emptyset$. By \cite[Proposition 21.11]{DGS1976}, $\cM_{FS}(X,T)$ is residual in $\cM(X,T)$. Choose $\mu_1\in\cM_{FS}(X,T)$ and $\mu_2\in\cM(X,T)$ with $h_{\mu_2}(T)+\int\varphi\mathrm{d}\mu_2>c$. Denote $\mu_t=t\mu_1+(1-t)\mu_2$ for any $0<t<1$. Then $\mu_t\in\cM_{FS}(X,T)$. When $t$ is sufficiently small, we have $\mu_t\in\Lambda_c$. Hence, $\cM_{FS}(X,T)\cap\Lambda_c\neq\emptyset$.
	
	Second, we prove that $\cM_{FS}(X,T)\cap\Lambda_c$ is residual in $\Lambda_c$. Let $U$ be an nonempty open subset of $X$. Denote $\mathcal{D}(U):=\{\mu\in\Lambda_c\mid\mu(U)=0\}$, then $\mathcal{D}(U)$ is closed in $\Lambda_c$. Choose $\mu\in\cM_{FS}(X,T)\cap\Lambda_c$. For any $\nu\in\mathcal{D}(U)$, any $0<t<1$, $(1-t)\nu+t\mu\in\Lambda_c\setminus\mathcal{D}(U)$. Hence, $\mathcal{D}(U)$ is nowhere dense in $\Lambda_c$. Suppose that $\{U_i\}_{i=1}^{+\infty}$ is a countable basis of $X$, then $\Lambda_c\setminus\cM_{FS}(X,T)=\bigcup\limits_{i=1}^{+\infty}\mathcal{D}(U_i)$. As a result, $\cM_{FS}(X,T)\cap\Lambda_c$ is residual in $\Lambda_c$. 
\end{proof}
\begin{remark}\label{remark 4.1}
	From \cite[Proposition 3.5]{LO2018}, if $(X,T)$ is transitive and has the shadowing property, then $\cM_{FS}(X,T)$ is residual in $\cM(X,T)$.
\end{remark}

The assumptions of Theorem \ref{mt-1} and Corollary \ref{mc-1} is weaker than the assumption that $T$ is a $C^1$ transitive Anosov diffeomorphism and $X$ is a compact Riemannian manifold. In the latter assumption, we have 
\begin{lemma}\label{l3.3}
	Suppose that $(X,T)$ is a dynamical system such that $T$ is $C^1$ transitive Anosov diffeomorphism and $X$ is a compact Riemannian manifold. Given $\varphi\in C(X)$ and $\inf\limits_{\mu\in\cM(X,T)}(h_\mu(T)+\int\varphi\mathrm{d\mu})\leq c<P(T,\varphi)$, then $\Lambda_c\setminus\mathcal{O}_T$ is an open and dense subset of $\Lambda_c$.
\end{lemma}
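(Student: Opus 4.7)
The plan is to split the statement into openness (easy) and density (the substantive part) and exploit the two structural facts supplied by Theorem \ref{the2.1}: $\mathcal{O}_T$ is weak$^\ast$ compact, and $\mathcal{O}_T\subset PE_T\subsetneq\cM(X,T)$ with $PE_T$ a weak face of $\cM(X,T)$.

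For openness, I would simply note that $\mathcal{O}_T$ is closed in $\cM(X,T)$ since it is weak$^\ast$ compact, so $\Lambda_c\setminus\mathcal{O}_T=\Lambda_c\cap(\cM(X,T)\setminus\mathcal{O}_T)$ is relatively open in $\Lambda_c$. The point is that I never need to stay inside $\mathcal{O}_T$ to detect it; any nearby non-physical-like measure suffices.

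For density, the idea is to build a single auxiliary measure $\omega_\star\in\Lambda_c\setminus PE_T$ and then perturb any given $\mu\in\Lambda_c$ towards it. First, since $PE_T\subsetneq\cM(X,T)$, I would pick some $\omega\in\cM(X,T)\setminus PE_T$. Since $c<P(T,\varphi)$, the variational principle gives $\mu_0\in\cM(X,T)$ with $h_{\mu_0}(T)+\int\varphi\mathrm{d}\mu_0>c$. For $s\in(0,1)$ set $\omega_s:=s\omega+(1-s)\mu_0$. Because the entropy function is affine and $\mu\mapsto\int\varphi\mathrm{d}\mu$ is linear, the quantity $h_{\omega_s}(T)+\int\varphi\mathrm{d}\omega_s$ is affine in $s$ with value $>c$ at $s=0$, so for some sufficiently small $s_0>0$ it is still $>c$, i.e.\ $\omega_{s_0}\in\Lambda_c$. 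The weak-face property of $PE_T$ forces $\omega_{s_0}\notin PE_T$: otherwise the nontrivial convex decomposition $\omega_{s_0}=s_0\omega+(1-s_0)\mu_0$ would place $\omega$ in $PE_T$, a contradiction. Set $\omega_\star:=\omega_{s_0}$, which by $\mathcal{O}_T\subset PE_T$ also lies outside $\mathcal{O}_T$.

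Now given any $\mu\in\Lambda_c$, define $\mu_n:=(1-\tfrac{1}{n})\mu+\tfrac{1}{n}\omega_\star$. Convexity of $\Lambda_c$ (which follows from affinity of $h_\bullet(T)+\int\varphi\mathrm{d}\bullet$) gives $\mu_n\in\Lambda_c$; the weak-face property again gives $\mu_n\notin PE_T\supset\mathcal{O}_T$; and clearly $\mu_n\to\mu$ in the weak$^\ast$ topology. This shows $\Lambda_c\setminus\mathcal{O}_T$ is dense in $\Lambda_c$. The only mild obstacle in the argument is engineering $\omega_\star$ to simultaneously lie in $\Lambda_c$ and off the weak face $PE_T$; the strict inequality $c<P(T,\varphi)$ is exactly what leaves room for the small perturbation by $\omega$, after which the weak-face property does all the remaining work.
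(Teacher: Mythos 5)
Your proof is correct and reaches the same conclusion via the same structural inputs ($\mathcal{O}_T$ weak$^\ast$ compact, $\mathcal{O}_T\subset PE_T\subsetneq\cM(X,T)$, $PE_T$ a weak face, convexity of $\Lambda_c$), but it is organized differently from the paper's argument. The paper proves density by contradiction: if $\Lambda_c\cap\mathcal{O}_T$ had nonempty interior in $\Lambda_c$, then $\Lambda_c\cap PE_T$ would be a weak face of $\Lambda_c$ with nonempty interior, forcing $\Lambda_c\subset PE_T$; a second application of the weak-face property, perturbing any $\mu\in\cM(X,T)$ into $\Lambda_c$ via an equilibrium state $\nu$ with $h_\nu(T)+\int\varphi\,\mathrm{d}\nu=P(T,\varphi)$, then yields $PE_T=\cM(X,T)$, a contradiction. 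You instead argue directly: you manufacture one concrete $\omega_\star\in\Lambda_c\setminus PE_T$ and approximate an arbitrary $\mu\in\Lambda_c$ by the convex perturbations $(1-\tfrac1n)\mu+\tfrac1n\omega_\star$, each thrown out of $PE_T$ by the weak-face property. This is the contrapositive, packaged constructively, and it has a small technical advantage: you only need some $\mu_0$ with $h_{\mu_0}(T)+\int\varphi\,\mathrm{d}\mu_0>c$ (which follows from the variational principle and $c<P(T,\varphi)$ alone), whereas the paper's version invokes an equilibrium state, i.e.\ that the supremum defining $P(T,\varphi)$ is attained (true here by upper semicontinuity of entropy for Anosov systems, but an extra ingredient). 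One minor point worth making explicit in your write-up: the weak-face step requires the convex decompositions to be nontrivial, so you should note that if $\omega=\mu_0$ (resp.\ $\mu=\omega_\star$) then $\omega$ (resp.\ $\mu$) already lies in $\Lambda_c\setminus PE_T$ and there is nothing to do; otherwise the decomposition is proper and the argument applies verbatim.
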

\begin{proof}
Since $\mathcal{O}_T$ is nonempty and weak$^\ast$ compact in $\cM(X)$, $\Lambda_c\setminus\mathcal{O}_T$ is open in $\Lambda_c$. Now, we will prove that $\Lambda_c\cap\mathcal{O}_T$ has empty interior in $\Lambda_c$. If not, by Theorem \ref{the2.1}, $\mathcal{O}_T\subset PE_T\subsetneq\cM(X,T)$. Hence, $\Lambda_c\cap PE_T$ has an interior point in $\Lambda_c$. Since $PE_T$ is a weak face of $\cM(X,T)$, $\Lambda_c\cap PE_T$ is a weak face of $\Lambda_c$. Hence, $\Lambda_c\cap PE_T=\Lambda_c$, which means $\Lambda_c\subset PE_T$. Choose $\nu\in\cM(X,T)$ with $h_{\nu}(T)+\int\varphi\mathrm{d\nu}=P(T,\varphi)$, then $\nu\in\Lambda_c$. Given $\mu\in\cM(X,T)\setminus\Lambda_c$, there exists $0<\lambda<1$ such that $\lambda\nu+(1-\lambda)\mu\in\Lambda_c$. Again, since $PE_T$ is a weak face of $\cM(X,T)$, we have $\mu\in PE_T$. As a result, $PE_T=\cM(X,T)$, contradict with $PE_T\subsetneq\cM(X,T)$. Therefore, $\Lambda_c\cap\mathcal{O}_T$ has empty interior in $\Lambda_c$. Hence, $\Lambda_c\setminus\mathcal{O}_T$ is dense in $\Lambda_c$.    	
\end{proof}

Directly from Corollary \ref{corollary-3}, when $\sup\limits_{\mu\in\cM(X,T)}\int\varphi\mathrm{d\mu}\leq c<P(T,\varphi)$, we have
\begin{corollary}\label{corollary 4.1}
	Suppose that $(X,T)$ is a dynamical system such that $T$ is $C^1$ transitive Anosov diffeomorphism and $X$ is a compact Riemannian manifold. Given $\varphi\in C(X)$ and $\sup\limits_{\mu\in\cM(X,T)}\int\varphi\mathrm{d\mu}\leq c<P(T,\varphi)$, then $\{f\in C(X)\mid\cM^\Lambda_{max}(f)\subset\Lambda_c\setminus\mathcal{O}_T\}$ is an open and dense subset of $C(X)$.
\end{corollary}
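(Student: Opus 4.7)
The plan is to invoke Corollary \ref{corollary-3} directly with the choices $\Lambda := \Lambda_c$ and $\mathcal{L} := \Lambda_c \setminus \mathcal{O}_T$. To do so I must verify three hypotheses of that corollary: that $\Lambda_c$ is a nonempty compact convex subset of $\cM(X,T)$, that $\Lambda_c^{\star}$ is dense in $\Lambda_c$, and that $\Lambda_c \setminus \mathcal{O}_T$ is open and dense in $\Lambda_c$.

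The first hypothesis is already recorded in the remark preceding Theorem \ref{theorem 3.1}: the assumption $c < P(T,\varphi)$ combined with upper semicontinuity of the entropy function gives that $\Lambda_c$ is nonempty, compact and convex. Since a $C^1$ Anosov diffeomorphism is expansive, its entropy function is upper semicontinuous, so this applies. The third hypothesis is exactly the statement of Lemma \ref{l3.3}.

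The second hypothesis is the only piece that needs to be assembled. A $C^1$ transitive Anosov diffeomorphism is topologically transitive and satisfies the classical shadowing lemma, so by Remark \ref{r3.1}(1) the system has the refined entropy-dense property. Combined with the extra assumption $\sup_{\mu\in\cM(X,T)}\int\varphi\,d\mu \leq c$, the hypotheses of Lemma \ref{lemma 3.1} are satisfied, and the lemma yields that $\Delta_c \cap \cM^e(X,T)$ is residual in $\Lambda_c$. A short affinity argument, of the same kind used just after Theorem \ref{t3.2}, gives the inclusion $\Delta_c \cap \cM^e(X,T) \subset \Lambda_c^{\star}$: if $\nu \in \Delta_c \cap \cM^e(X,T)$ and $\mu = (1-\lambda)\hat\mu + \lambda\nu \in \Lambda_c$ for some $0<\lambda<1$ and $\hat\mu \in \cM(X,T)$, then the affinity of the entropy function and the linearity of $\mu \mapsto \int\varphi\,d\mu$ force $h_{\hat\mu}(T) + \int\varphi\,d\hat\mu \geq c$, i.e.\ $\hat\mu \in \Lambda_c$. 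Hence $\Lambda_c^{\star}$ is residual, and in particular dense, in $\Lambda_c$.

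With the three hypotheses in hand, Corollary \ref{corollary-3} applies and immediately gives the desired open-dense conclusion. I do not anticipate a substantive obstacle: the supporting structure has been built up so that this corollary is essentially a bookkeeping step. If any step is delicate it would be the use of the weak-face property of $PE_T$ inside Lemma \ref{l3.3}, together with $\mathcal{O}_T \subset PE_T \subsetneq \cM(X,T)$ from Theorem \ref{the2.1}, to force $\Lambda_c \cap \mathcal{O}_T$ to have empty interior in $\Lambda_c$; but that argument is already carried out in the proof of Lemma \ref{l3.3}, so nothing new is required here.
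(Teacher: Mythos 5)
Your proposal is correct and follows the same route the paper intends: invoke Corollary \ref{corollary-3} with $\Lambda = \Lambda_c$ and $\mathcal{L} = \Lambda_c \setminus \mathcal{O}_T$, using Lemma \ref{l3.3} for openness and density of $\mathcal{L}$, and Lemma \ref{lemma 3.1} together with the inclusion $\Delta_c \cap \cM^e(X,T) \subset \Lambda_c^{\star}$ (via affinity of entropy) for density of $\Lambda_c^{\star}$. You merely make explicit the hypothesis-checking that the paper leaves implicit when it says the corollary follows ``directly.''
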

Combine Remark \ref{r3.1}, Theorem \ref{theorem 3.1}, Lemma \ref{lemma 3.1}, Remark \ref{remark 4.1} and Corollary \ref{corollary 4.1}, we have Theorem \ref{mt-1}.
\subsection{Asymptotically additive sequences} Given $f\in C(X)$, denote $S_nf=\sum\limits_{i=0}^{n-1}f\circ T^i$. The sequence $(S_nf)_{n\geq1}$ is called \emph{additive sequence}. Asymptotically additive sequences was first introduced in \cite{Feng-Huang2010}. A sequence $(f_n)_{n\geq1}\subset C(X)$ is said to be \emph{asymptotically additive} if $\inf\limits_{f\in C(X)}\limsup\limits_{n\to+\infty}\frac{1}{n}\|f_n-S_nf\|=0$. Denote $\mathcal{F}=(f_n)_{n\geq1}$, by \cite[Proposition A.1]{Feng-Huang2010}, $\mathcal{F}_\ast(\mu):=\lim\limits_{n\to+\infty}\frac{1}{n}\int f_n\mathrm{d\mu}$ exists for any $\mu\in\cM(X,T)$ and is said to be the \emph{average Lyapunov exponent} of $\mathcal{F}$. We say that the sequence $\mathcal{F}$ satisfies the \emph{variational principle} if $$P(T,\mathcal{F})=\sup\limits_{\mu\in\cM(X,T)}(\mathcal{F}_\ast(\mu)+h_\mu(T)).$$ Where, $$P(T,\mathcal{F}):=\lim_{\varepsilon\to0}\limsup_{n\to+\infty}\frac{1}{n}\log \sup\left\{\sum_{x\in E}e^{f_n(x)}\mid E\text{ is }(n,\varepsilon)\text{-separated}\right\},$$ is defined as the \emph{topological pressure} of $\mathcal{F}$.

An important link between asymptotically additive sequences and additive sequences was showed in \cite{Noe2020}.
\begin{theorem}\cite[Theorem 1.2]{Noe2020}\label{the4.1}
	Suppose that $\mathcal{F}=(f_n)_{n\geq1}$ is a asymptotically additive sequence. Then there exists $f\in C(X)$ such that $$\lim\limits_{n\to+\infty}\frac{1}{n}\|f_n-S_nf\|=0.$$
\end{theorem}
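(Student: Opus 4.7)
The plan is to convert the weak hypothesis (that $\inf_{g\in C(X)}\limsup_{n\to+\infty}\|f_n-S_ng\|/n=0$) into the existence of a single $f$ realizing $\lim_{n\to+\infty}\|f_n-S_nf\|/n=0$, by extracting rapid approximants and upgrading them to a uniform limit.

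First I would unpack the hypothesis: for every $k\geq 1$, pick $g_k\in C(X)$ and $N_k\in\mathbb{N^{+}}$ with $\|f_n-S_ng_k\|\leq 2^{-k}n$ for all $n\geq N_k$. The triangle inequality then gives $\|S_n(g_k-g_\ell)\|\leq(2^{-k}+2^{-\ell})n$ whenever $n\geq\max\{N_k,N_\ell\}$, so the sequence $(g_k)$ is Cauchy with respect to the seminorm $\|h\|_{\ast}:=\limsup_{n\to+\infty}\|S_nh\|/n$. If I could promote this to sup-norm Cauchy-ness and obtain $f\in C(X)$ with $\|g_k-f\|\to 0$, then for any $\varepsilon>0$ and $k$ with $2^{-k}<\varepsilon/2$ and $\|g_k-f\|<\varepsilon/2$ I would have $\|f_n-S_nf\|\leq\|f_n-S_ng_k\|+n\|g_k-f\|\leq\varepsilon n$ for all $n\geq N_k$, which is exactly the desired conclusion.

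The hard part is precisely this upgrading step: $\|\cdot\|_{\ast}$ has a large kernel containing every continuous coboundary $u-u\circ T$, so Cauchy-ness in $\|\cdot\|_{\ast}$ does not by itself yield a sup-norm limit. The most direct attempt is to decompose each increment $g_k-g_{k+1}=(u_k-u_k\circ T)+r_k$ with $\|r_k\|=O(2^{-k})$ and $u_k\in C(X)$, and then set $f:=g_1-\sum_{k}r_k-(U-U\circ T)$ with $U:=\sum_k u_k$; but this coboundary-plus-small-remainder splitting is a Liv\v{s}ic-type statement that generally fails without additional hyperbolicity hypotheses on $T$. To bypass it, I would build $f$ directly from $(f_n)$, using the quantitative almost-additive estimate $\|f_{n+m}-f_n-f_m\circ T^n\|=o(n+m)$ (which itself follows from asymptotic additivity by writing $f_{n+m}-f_n-f_m\circ T^n$ as a sum of three error terms against a common $g_k$) and iterating it on a geometric scale $n_j=2^j$ to produce $f$ as a telescoping series of rescaled increments together with the cocycle corrections coming from comparing $f_{n_{j+1}}$ with $f_{n_j}+f_{n_j}\circ T^{n_j}$. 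Verifying that these corrections sum to a continuous function, and then using almost-additivity once more to pass from $\|f_{n_j}-S_{n_j}f\|=o(n_j)$ back to the full statement $\|f_n-S_nf\|=o(n)$ for all $n$, will be the technical core of the argument.
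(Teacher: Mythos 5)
Your reduction to a Cauchy statement in the seminorm $\|h\|_{*}:=\limsup_{n\to+\infty}\frac{1}{n}\|S_n h\|$ is correct, and you correctly diagnose that upgrading this to sup-norm Cauchy-ness is blocked by the coboundary kernel. However, the telescoping construction you propose as a bypass does not actually circumvent the obstruction you identified. The cocycle corrections $f_{n_{j+1}}-f_{n_j}-f_{n_j}\circ T^{n_j}$ are only $o(n_{j+1})$, and the natural rescaled increments $\frac{1}{2^{j+1}}f_{2^{j+1}}-\frac{1}{2^j}f_{2^j}$ contain a term of size comparable to $\frac{1}{2^{j+1}}\bigl\|f_{2^j}\circ T^{2^j}-f_{2^j}\bigr\|$, which has no reason to be summable. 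Producing a single continuous $f$ from the scale sequence $f_{2^j}/2^j$ is exactly the same Liv\v{s}ic-type problem of representing a ``limit modulo coboundaries'' by an actual function, so the sketch loops back to the step it was meant to avoid.

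The key idea you are missing, and the one Cuneo's proof in \cite{Noe2020} is built on, is that you should not try to leave the seminorm at all: $\|\cdot\|_{*}$ is, up to identifying functions differing by elements of $\mathcal{S}:=\overline{\{u-u\circ T\mid u\in C(X)\}}$, precisely the quotient norm on the Banach space $C(X)/\mathcal{S}$. Concretely, Cuneo's Lemma 2.2 states $\inf_{s\in\mathcal{S}}\|h-s\|=\lim_{n\to+\infty}\frac{1}{n}\|S_n h\|$ (so in particular the $\limsup$ is a genuine limit, and the seminorm vanishes exactly on $\mathcal{S}$). Since $\mathcal{S}$ is closed, $C(X)/\mathcal{S}$ is complete, your sequence $(\tilde{g}_k)$ is Cauchy there, hence converges to some $\tilde{f}$; take any representative $f\in C(X)$. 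Then $\lim_{n\to+\infty}\frac{1}{n}\|S_n(g_k-f)\|=\|\tilde{g}_k-\tilde{f}\|_{*}\to 0$ as $k\to+\infty$, and your triangle-inequality finish goes through once you replace $n\|g_k-f\|$ by $\|S_n(g_k-f)\|$: for $\varepsilon>0$ pick $k$ with $2^{-k}<\varepsilon/2$ and $\|\tilde{g}_k-\tilde{f}\|_{*}<\varepsilon/2$, giving $\limsup_{n\to+\infty}\frac{1}{n}\|f_n-S_n f\|\leq 2^{-k}+\|\tilde{g}_k-\tilde{f}\|_{*}<\varepsilon$. So the proof is a completeness argument in a carefully chosen quotient Banach space, hinging on the nontrivial identification of the asymptotic seminorm with that quotient norm, not a direct construction of $f$ from the sequence $(f_n)$.
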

Moreover, it was shown in \cite[Section 3.1]{Noe2020} that $\mathcal{F}_\ast(\mu)=\int f\mathrm{d\mu}$ and $P(T,\mathcal{F})=P(T,f)$. As a consequence, $\mathcal{F}$ satisfies the variational principle.

Denote $ADS$ the space of asymptotically additive sequences. Define a equivalence relation $\thicksim$ on $ADS$ by $\mathcal{F}_1=(f_{1,n})_{n\geq1}\thicksim\mathcal{F}_2=(f_{2,n})_{n\geq1}$ if and only if $\lim\limits_{n\to+\infty}\frac{1}{n}\|f_{1,n}-f_{2,n} \|=0$. For any $\mathcal{F}\in ADS$, denote $[\mathcal{F}]:=\{\tilde{\mathcal{F}}\in ADS\mid\tilde{\mathcal{F}}\thicksim\mathcal{F}\}$ the equivalence class of $\mathcal{F}$. In particular, for any $f\in C(X)$, denote $[f]:=[(S_nf)_{n\geq1}]$. By Theorem \ref{the4.1}, $ADS/\thicksim=\{[f]\mid f\in C(X)\}$. The norm on $ADS/\thicksim$ is defined by $\|[\mathcal{F}]\|_\star=\lim\limits_{n\to+\infty}\frac{1}{n}\|f_{n}\|$ for any $[\mathcal{F}]=[(f_{n})_{n\geq1}]\in ADS/\thicksim$. Then $(ADS/\thicksim,\|\cdot\|_\star)$ is a normed vector space.

To show that $(ADS/\thicksim,\|\cdot\|_\star)$ is a Banach space. Consider $\mathcal{S}:=\overline{\{h-h\circ T\mid h\in C(X)\}}$ and the equivalence relation $\thicksim$ on $C(X)$ defined by $f\thicksim g$ if and only if $f-g\in\mathcal{S}$. Denote $\tilde{f}:=\{g\in C(X)\mid f\thicksim g\}$ the equivalence class of $f\in C(X)$. Since $\mathcal{S}$ is closed in $(C(X),\|\cdot\|)$, by \cite[Proposition 1.35]{FHHMZ2011}, the quotient sapce $(C(X)/\thicksim,\|\cdot\|_\ast)$ is a Banach space, where the norm $\|\cdot\|_\ast$ is defined by $\|\tilde{f}\|_\ast:=\inf\limits_{g\in\tilde{f}}\|g\|$ for any $\tilde{f}\in C(X)/\thicksim$. By \cite[Lemma 2.2]{Noe2020}, $\|\tilde{f}\|_\ast=\lim\limits_{n\to+\infty}\frac{1}{n}\|S_nf\|$. Hence, $\|\tilde{f}\|_\ast=\|[f]\|_\star$. As a result, $(ADS/\thicksim,\|\cdot\|_\star)$ is a Banach space.

Suppose that $\Lambda$ is a nonempty and compact subset of $\cM(X,T)$. Similar to continuous functions, for any $[\mathcal{F}]=[(f_n)_{n\geq1}]\in ADS/\thicksim$, define
\[
\begin{split}
	\beta^\Lambda([\mathcal{F}])&:=\sup\limits_{\mu\in\Lambda}\mathcal{F}_\ast(\mu),\\
	\cM^\Lambda_{max}([\mathcal{F}])&:=\left\{\mu\in\Lambda\mid\mathcal{F}_\ast(\mu)=\beta^\Lambda([\mathcal{F}])\right\},\\
	\mathcal{U}^\Lambda_{ADS/\thicksim}&:=\left\{[\mathcal{F}]\in ADS/\thicksim\mid \cM^\Lambda_{max}([\mathcal{F}]) \text{ is a singleton}\right\}.
\end{split}
\]

Define $P(T,[\mathcal{F}]):=P(T,\mathcal{F})$. Analogously, we can generalize Theorem \ref{mt-1} and Corollary \ref{mc-1} from $C(X)$ to $ADS/\thicksim$. 
\begin{theorem}
	Suppose that $(X,T)$ is transitive, has the shadowing property and entropy function is upper semicontinuous. Then for $[\tilde{\mathcal{F}}]\in ADS/\thicksim$ and $\sup\limits_{\mu\in\cM(X,T)}\tilde{\mathcal{F}}_\ast(\mu)\leq c<P(T,[\tilde{\mathcal{F}}])$, there is a residual subset $\mathcal{R}$ of $ADS/\thicksim$, such that for any $[\mathcal{F}]\in\mathcal{R}$, there is a unique $\mu_{[\mathcal{F}]}^c\in\cM(X,T)$ such that
	\begin{enumerate}[(1)]
		\item $\mathcal{F}_\ast(\mu_{[\mathcal{F}]}^c)=\sup\{\mathcal{F}_\ast(\mu)\mid h_\mu(T)+\tilde{\mathcal{F}}_\ast(\mu)\geq c\}$;
		\item $\mu_{[\mathcal{F}]}^c\in\cM^e(X,T)$;
		\item $\supp{\mu_{[\mathcal{F}]}^c}=X$;
		\item $h_{\mu_{[\mathcal{F}]}^c}(T)+\tilde{\mathcal{F}}_\ast(\mu)=c$. 	
	\end{enumerate}
Moreover, when $T$ is $C^1$ transitive Anosov diffeomorphism and $X$ is a compact Riemannian manifold. we can require that
\begin{enumerate}[(5)] 
	\item $\mu_{[\mathcal{F}]}^c$ isn't physical-like.	
\end{enumerate} 
\end{theorem}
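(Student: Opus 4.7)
The plan is to mirror the proof of Theorem \ref{mt-1} verbatim in the Banach space $(ADS/\thicksim,\|\cdot\|_\star)$. First I would fix a representative $\varphi\in C(X)$ of $[\tilde{\mathcal{F}}]$ via Theorem \ref{the4.1}, observing that $\tilde{\mathcal{F}}_\ast(\mu)=\int\varphi\,d\mu$ and $P(T,[\tilde{\mathcal{F}}])=P(T,\varphi)$; the hypothesis then reduces exactly to that of Theorem \ref{mt-1} and the constraint set $\Lambda_c$ together with the face $\Delta_c$ are literally the same as there. Since Lemmas \ref{lemma 3.1} and \ref{lem4.9} are statements purely about measures, one already obtains that $\Delta_c\cap\cM^e(X,T)\cap\cM_{FS}(X,T)$ is residual in $\Lambda_c$; in the $C^1$ Anosov case, Lemma \ref{l3.3} further provides the open and dense subset $\Lambda_c\setminus\mathcal{O}_T$.

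Next I would rerun the proofs of Theorems \ref{theorem-1}, \ref{theorem 3.1} and Corollaries \ref{corollary-1}, \ref{corollary-3} with $C(X)$ replaced by $ADS/\thicksim$. The required ingredients transfer cleanly: $(ADS/\thicksim,\|\cdot\|_\star)$ is a Banach space (hence Baire); each $\mu\in\cM(X,T)$ yields a bounded linear functional $[\mathcal{F}]\mapsto\mathcal{F}_\ast(\mu)$ which is well-defined on the quotient because $\mathcal{S}=\overline{\{h-h\circ T\mid h\in C(X)\}}$ lies in the kernel of every invariant measure; the functional $\beta^{\Lambda_c}$ is convex and Lipschitz on $ADS/\thicksim$; Jenkinson's Lemma \ref{lem3.6} applied in $C(X)$ produces $h$ whose class $[h]$ inherits the uniqueness property; and the Bishop--Phelps theorem (Lemma \ref{lem3.2}) holds in any Banach space. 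Lemmas \ref{lem3.1}, \ref{lem3.3}, \ref{lem3.4}, \ref{lem3.5} and \ref{lem3.7} then carry over to the quotient, the only subtlety being that the Bishop--Phelps conclusion in $(ADS/\thicksim)^*$ is at least as strong as the original $C(X)$-version, since $\|\tilde{\psi}\|_\star\leq\|\psi\|$; hence Lemma \ref{lem3.5} still applies and produces the required convex decomposition. Assembling these and invoking the quotient-space Corollary \ref{corollary-1} with $\mathcal{L}=\Delta_c\cap\cM^e(X,T)\cap\cM_{FS}(X,T)$ (intersected with $\Lambda_c\setminus\mathcal{O}_T$ in the Anosov case) yields the desired residual $\mathcal{R}\subset ADS/\thicksim$, and for each $[\mathcal{F}]\in\mathcal{R}$ the unique element $\mu^c_{[\mathcal{F}]}\in\cM^{\Lambda_c}_{max}([\mathcal{F}])$ satisfies (1)--(5).

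The main obstacle I expect is the $ADS/\thicksim$-analog of Lemma \ref{lem3.3}, namely that the tangent functional to $\beta^{\Lambda_c}$ produced by Bishop--Phelps in $(ADS/\thicksim)^*$ is genuinely an element of $\cM(X,T)$ rather than an abstract bounded linear functional. The original proof invokes \textup{\cite[Proposition 1.2.5]{Ollagnier1985}} on $C(X)$ to extract positivity, normalization and invariance from the inequality $F\leq\beta^{\Lambda_c}$. In the quotient one must lift such an $F$ through any continuous representative $f\in C(X)$ of $[\mathcal{F}]$ to obtain a functional $F\circ\pi$ on $C(X)$ with $F\circ\pi\leq\beta^{\Lambda_c}\circ\pi$, so that the Ollagnier argument forces $F\circ\pi$ to be an invariant probability measure; this measure then descends consistently to a functional on $ADS/\thicksim$ precisely because $\mathcal{S}$ annihilates $\cM(X,T)$. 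Once this identification is established the rest of the argument reads off exactly as in the proof of Theorem \ref{mt-1}.
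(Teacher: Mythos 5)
Your proposal is correct and takes exactly the route the paper intends (the paper itself only remarks that the ``proof follows almost the same lines'' and omits it): transport everything through the isometric identification $ADS/\thicksim\,\cong C(X)/\thicksim$, note that all the measure-theoretic sets $\Lambda_c$, $\Delta_c$, $\cM_{FS}$, $\Lambda_c\setminus\mathcal{O}_T$ are unchanged, and rerun Theorem~\ref{theorem-1} and Corollaries~\ref{corollary-3}, \ref{corollary-1} in the quotient Banach space. Your handling of the one genuine subtlety --- that the Bishop--Phelps tangent functional in $(ADS/\thicksim)^*$ must come from an invariant probability measure, which you resolve by lifting through the quotient map to $C(X)$, applying the Ollagnier argument there, and using that invariant measures annihilate $\mathcal{S}$ to descend --- is also right, as is the observation that $\|[\psi]\|_\star\le\|\psi\|$ keeps Lemma~\ref{lem3.5} applicable.
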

The proof follow almost the same lines, hence are omitted.
\subsection{Some other examples} Recall, expansive implies entropy expansive, entropy expansive implies asymptotically entropy expansive, asymptotically entropy expansive implies that the entropy function is upper semicontinuous (see for example \cite{Misiurewicz 1976}).
\begin{lemma}\cite[Theorem 1.1]{Sun2019}\label{Lemma 4.3}
	Suppose that $(X,T)$ is an asymptotically entropy expansive dynamical system with approximate product property. Then $(X,T)$ has refined intermediate entropy property.
\end{lemma}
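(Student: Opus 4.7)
The plan is to combine the Pfister--Sullivan style entropy density argument (which the approximate product property is custom-designed for) with a convex-combination trick that lets one dial the entropy down to exactly the prescribed value $c$. Fix $\mu\in\cM(X,T)$ with $h_\mu(T)\geq c$ and a target precision $\varepsilon>0$; our goal is to produce $\nu\in\cM^e(X,T)$ with $\varrho(\mu,\nu)<\varepsilon$ and $h_\nu(T)=c$.

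First I would establish the standard entropy density statement in this setting: the approximate product property, together with the ergodic decomposition, yields a sequence of ergodic measures $\mu_n$ with $\mu_n\to\mu$ in the weak$^\ast$ topology and $h_{\mu_n}(T)\to h_\mu(T)$. The construction is the one from Pfister--Sullivan: take orbit segments $(x_k,n_k)$ that typify $\mu$ (and therefore have Bowen balls of roughly the right exponential size by Katok's entropy formula), concatenate them via the approximate product property to build a shift-invariant closed set $Y$, and let $\mu_n$ be any ergodic measure supported on $Y$. Upper semicontinuity of the entropy map (from asymptotic $h$-expansiveness) is used to pass from the uniform lower bound on $(n,\delta)$-separated subsets of $Y$ to the lower bound on $h_{\mu_n}(T)$. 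Since $c<h_{top}(T)$ there are also ergodic measures $\omega$ with arbitrarily small entropy, e.g.\ a fixed point measure, which exist for free.

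Next, for an appropriate $n$ so that $h_{\mu_n}(T)>c$, I would consider the convex combination $\nu_\lambda:=(1-\lambda)\mu_n+\lambda\omega$ and note that by affinity
\[
h_{\nu_\lambda}(T)+\text{(linear in }\lambda)\;\text{ranges continuously from }h_{\mu_n}(T)\text{ to }h_\omega(T),
\]
so there is $\lambda_\ast\in(0,1)$ with $h_{\nu_{\lambda_\ast}}(T)=c$ and $\nu_{\lambda_\ast}$ still within $\varepsilon/2$ of $\mu$ (provided $\omega$ has been chosen so its weight $\lambda_\ast$ is small, which is possible once $h_{\mu_n}(T)$ is close enough to $h_\mu(T)$). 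The measure $\nu_{\lambda_\ast}$ is of the right entropy and weak$^\ast$ close to $\mu$, but is not ergodic. To ergodize it I would reuse the approximate product property: concatenate typical orbit segments of $\mu_n$ and of $\omega$ in the frequencies $1-\lambda_\ast$ and $\lambda_\ast$, glue them with the gap functions provided by APP, and extract an ergodic measure supported on the resulting quasi-regular set. Katok-type counting plus upper semicontinuity of the entropy function give entropy arbitrarily close to $(1-\lambda_\ast)h_{\mu_n}(T)+\lambda_\ast h_\omega(T)=c$, and the empirical distributions along the constructed orbits approach $\nu_{\lambda_\ast}$, hence the resulting ergodic measure is within $\varepsilon$ of $\mu$.

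The main obstacle is the last step: producing an ergodic measure whose entropy is \emph{exactly} $c$, not merely $c\pm o(1)$. I would handle this by introducing a one-parameter family of such APP-constructions, parametrized by the relative frequency $\lambda\in[0,1]$ of the low-entropy segments. Varying $\lambda$ produces a family of ergodic measures whose entropies vary continuously (again using upper semicontinuity together with the explicit Bowen-ball lower bound from the construction) from a value $>c$ at $\lambda$ small to a value $<c$ at $\lambda$ close to $1$. An intermediate value argument then pins the entropy to exactly $c$, while the weak$^\ast$ distance to $\mu$ stays within $\varepsilon$ throughout. This yields the desired $\nu$, and since $\varepsilon$ was arbitrary, $\{\nu\in\cM^e(X,T):h_\nu(T)=c\}$ is dense in $\{\mu\in\cM(X,T):h_\mu(T)\geq c\}$, which is the refined intermediate entropy property.
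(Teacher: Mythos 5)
The overall framework (entropy density from approximate product property, then dialing the entropy down to exactly $c$) is the right one, but two steps in your plan have genuine gaps. First, the closeness of $\nu_{\lambda_\ast}$ to $\mu$ is not secured. You need $\lambda_\ast$ to satisfy $(1-\lambda_\ast)h_{\mu_n}(T)+\lambda_\ast h_\omega(T)=c$, so $\lambda_\ast\approx(h_\mu(T)-c)/(h_\mu(T)-h_\omega(T))$; this is \emph{not} small when $h_\mu(T)>c$, and your parenthetical ``possible once $h_{\mu_n}(T)$ is close enough to $h_\mu(T)$'' runs in the wrong direction. With $\lambda_\ast$ bounded away from $0$ and $\omega$ an arbitrary low-entropy measure (and fixed points do \emph{not} exist ``for free'' under these hypotheses), $\nu_{\lambda_\ast}$ need not be $\varepsilon$-close to $\mu$. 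To fix this you would need $\omega$ itself close to $\mu$, i.e.\ a low-entropy ergodic measure approximating $\mu$ --- but that is essentially the $c=0$ case of the very statement being proved, so one must be careful to supply it by an independent construction rather than by appealing to the result.

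Second, and more seriously, the intermediate-value argument is unsupported. Upper semicontinuity of the entropy function plus a Bowen-ball lower bound gives upper and lower estimates on the entropy of the constructed ergodic measure at each fixed $\lambda$, but it does not make $\lambda\mapsto h_{\nu(\lambda)}(T)$ a continuous function: the map $\lambda\mapsto\nu(\lambda)$ is built from discrete combinatorial choices (segment lengths, gap sizes, frequencies) and has no obvious continuity, and entropy is only upper semicontinuous in the measure, not continuous. Without continuity the image need not be an interval, so you cannot ``pin the entropy to exactly $c$'' this way. The route taken in the cited reference avoids both problems: starting from an ergodic $\mu'$ with $h_{\mu'}(T)>c$ close to $\mu$, one uses the approximate product property to build a Moran-type closed invariant subset $Y$ by concatenating $\mu'$-generic orbit segments, \emph{thinning} the number of admissible segments at each level so that $h_{\mathrm{top}}(Y)=c$ exactly, while ensuring all invariant measures on $Y$ are $\varepsilon$-close to $\mu'$. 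Asymptotic entropy expansiveness then guarantees a measure of maximal entropy on $Y$; an ergodic component of it has entropy exactly $c$, and one never needs a second measure $\omega$, a convex combination, or an intermediate-value argument.
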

By Lemma \ref{Lemma 4.3}, Theorem \ref{theorem 3.1} and Lemma \ref{lemma 3.1}, we have
\begin{theorem}
	Suppose that $(X,T)$ is an asymptotically entropy expansive dynamical system with approximate product property. Then for $\varphi\in C(X)$ and $\sup\limits_{\mu\in\cM(X,T)}\int\varphi\mathrm{d\mu}\leq c<P(T,\varphi)$, there is a residual subset $\mathcal{R}$ of $C(X)$, such that for any $f\in\mathcal{R}$, there is a unique $\mu_f^c\in\cM(X,T)$ such that
	\begin{enumerate}[(1)]
		\item $\int f\mathrm{d}{\mu_f^c}=\sup\{\int f\mathrm{d\mu}\mid h_\mu(T)+\int\varphi\mathrm{d\mu}\geq c\}$;
		\item $\mu_f^c\in\cM^e(X,T)$;
		\item $h_{\mu_f^c}(T)+\int\varphi\mathrm{d}\mu_f^c=c$.
	\end{enumerate}	
\end{theorem}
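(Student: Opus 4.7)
The plan is to mirror the proof of Theorem A by assembling the three ingredients already prepared in the excerpt: Lemma 4.3, Lemma 3.1, and the conclusion of Theorem 3.1(1). First I would record that asymptotic entropy expansiveness forces the entropy function $\mu\mapsto h_\mu(T)$ to be upper semicontinuous on $\cM(X,T)$, a standard consequence noted in the lead-in to Lemma 4.3. Next, Lemma 4.3 supplies the refined intermediate entropy property for $(X,T)$, and directly from the definitions (as observed right after the two definitions in Section 3) this implies the refined entropy-dense property. Thus the standing hypotheses of both Theorem 3.1 and Lemma 3.1 are verified for our $(X,T)$, $\varphi$, and $c$.

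Second, I would apply Lemma 3.1 to conclude that the set
\[
\Delta_c\cap\cM^e(X,T)=\Bigl\{\mu\in\cM^e(X,T):h_\mu(T)+\textstyle\int\varphi\,\mathrm{d}\mu=c\Bigr\}
\]
is residual in $\Lambda_c$. Taking $\mathcal{L}:=\Delta_c\cap\cM^e(X,T)$ in Theorem 3.1(1), the set
\[
\mathcal{R}:=\Bigl\{f\in\mathcal{U}^{\Lambda_c}_{C(X)}:\cM^{\Lambda_c}_{\max}(f)\subset\Delta_c\cap\cM^e(X,T)\Bigr\}
\]
is residual in $C(X)$. For any $f\in\mathcal{R}$, the defining property of $\mathcal{U}^{\Lambda_c}_{C(X)}$ gives a singleton $\cM^{\Lambda_c}_{\max}(f)=\{\mu_f^c\}$; being in $\cM^{\Lambda_c}_{\max}(f)$ yields (1); being in $\cM^e(X,T)$ yields (2); and membership in $\Delta_c$ yields (3).

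I do not expect a serious obstacle: the argument is a genuine repackaging. The only delicate points are the two compatibility checks, namely that refined intermediate entropy implies refined entropy-dense (so Theorem 3.1 applies), and that $\sup_{\mu}\int\varphi\,\mathrm{d}\mu\le c<P(T,\varphi)$ is exactly the hypothesis needed both for Lemma 3.1 (to keep $c+c_1-\int\varphi\,\mathrm{d}\mu>0$ in that proof) and for $\Lambda_c$ to be nonempty and distinct from a single atom. Finally I would point out that, in contrast to Theorem A, the conclusion here omits the full-support clause: without transitivity plus shadowing one cannot invoke Lemma 4.9 and Remark 4.1 to conclude that $\cM_{FS}(X,T)\cap\Lambda_c$ is residual in $\Lambda_c$, so the corresponding statement is dropped.
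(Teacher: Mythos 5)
Your proposal is correct and follows essentially the same route the paper takes, which is stated tersely as ``By Lemma \ref{Lemma 4.3}, Theorem \ref{theorem 3.1} and Lemma \ref{lemma 3.1}, we have\ldots''. You have simply unpacked that one-line invocation: Lemma \ref{Lemma 4.3} yields refined intermediate entropy (hence refined entropy-dense) property, asymptotic entropy expansiveness gives upper semicontinuity of entropy, Lemma \ref{lemma 3.1} makes $\Delta_c\cap\cM^e(X,T)$ residual in $\Lambda_c$, and Theorem \ref{theorem 3.1}(1) converts this into the residual set $\mathcal R$ of potentials; your remark about why the full-support clause must be dropped is also accurate.
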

\begin{lemma}\cite[Theorem B]{Climenhaga-Thompson-Yamamoto 2017}\label{Lemma 4.4}
Suppose that $(X,\sigma)$ is a two-sided subshift of $(\Sigma_k,\sigma)$. Let $\mathcal{L}$ be the language of $X$. Suppose that there exists a set $\mathcal{G}\subset\mathcal{L}$ such that:
\begin{enumerate}[(1)]
	\item there exists $\tau>0$ such that for every $v,w\in\mathcal{G}$ there exists $u\in\mathcal{L}$ with $|u|\leq\tau$ such that $vuw\in\mathcal{G}$;
	\item $\mathcal{L}$ is edit approachable by $\mathcal{G}$.
\end{enumerate}
Then there exists a family $\{X_n\}_{n\in\mathbb{N^{+}}}$ of transitive sofic subshifts of $X$ such that for any $\eta>0$, any $\mu\in\cM(X,\sigma)$, there exists $n\geq1$ and $\nu\in\cM^e(X_n,\sigma)$ such that $\varrho(\nu,\mu)<\eta$ and $h_\nu(\sigma)>h_\mu(\sigma)-\eta$.
\end{lemma}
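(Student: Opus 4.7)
My plan is to follow the Climenhaga--Thompson--Yamamoto strategy: construct the approximating sofic subshifts $X_n$ explicitly out of the gluable collection $\mathcal{G}$, and then produce the desired ergodic approximants by combining Katok's entropy formula with the edit approachability hypothesis.

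First I would build the family $\{X_n\}$. Let $\mathcal{G}_n := \mathcal{G} \cap \bigcup_{k \leq n} \mathcal{L}_k$, a finite collection. Condition (1) lets me choose, for each ordered pair $(v,w) \in \mathcal{G}_n \times \mathcal{G}_n$, a connecting word $u_{v,w} \in \mathcal{L}$ of length at most $\tau$ with $v\, u_{v,w}\, w \in \mathcal{G}$. Take $X_n \subset X$ to be the orbit closure of all bi-infinite concatenations of elements of $\mathcal{G}_n$ glued by the chosen bridges. Because the admissible concatenations are described by a finite directed graph labeled on $\mathcal{G}_n$ (together with the fixed bridge words), $X_n$ is a sofic shift; the common bridging through condition (1) makes it transitive.

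Second, I would set up the entropy/weak$^\ast$ approximation. Fix $\mu \in \cM(X,\sigma)$ and $\eta > 0$. An ergodic decomposition together with a convex-combination argument reduces the problem to $\mu \in \cM^e(X,\sigma)$. Choose a partition $\xi$ fine enough that closeness of empirical distributions on $\xi$ forces $\varrho$-closeness below $\eta/2$. For small $\varepsilon > 0$ and all large $m$, Katok's entropy formula furnishes an $(m,\varepsilon)$-separated set $E_m$ of $\mu$-typical cylinder representatives with $\#E_m \geq \exp(m(h_\mu(\sigma) - \eta/4))$ and empirical distributions $\eta/4$-close to $\mu$.

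Third, I would invoke edit approachability to replace each $x \in E_m$ by a word $\widetilde{x} \in \mathcal{G}$ differing from $x$ in only $e(m) = o(m)$ positions. Since the combinatorial count of words within edit distance $e(m)$ of a fixed word is subexponential in $m$, the map $x \mapsto \widetilde{x}$ remains roughly injective at the exponential scale, so the image set $\widetilde{E}_m \subset \mathcal{G}_n$ (for $n$ slightly larger than $m$) still has cardinality at least $\exp(m(h_\mu(\sigma) - \eta/2))$. Concatenating the words of $\widetilde{E}_m$ through the bridges of $X_{n}$, pushing forward the uniform measure on the resulting length-$(m+\tau)$ periodic orbits, and extracting an ergodic component by standard arguments yields $\nu \in \cM^e(X_n,\sigma)$ with $\varrho(\nu,\mu) < \eta$ and $h_\nu(\sigma) > h_\mu(\sigma) - \eta$.

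The main obstacle is the simultaneous control in the third step. I must check that the sublinear edits preserve the empirical statistics closely enough that $\nu$ stays $\eta$-close to $\mu$ (ensured because the edit budget $e(m)/m \to 0$ bounds the total variation between the empirical distributions of $x$ and $\widetilde{x}$), while also guaranteeing that the entropy loss from (a) identifications introduced by editing and (b) the extra $\log \#\mathcal{L}_\tau$ bits absorbed per word by the bridges both vanish in the large-$m$ limit. This pair of estimates is the crux of the CTY construction; once they are in place the theorem follows.
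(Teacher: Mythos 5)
This lemma is cited in the paper as Theorem~B of Climenhaga--Thompson--Yamamoto and is \emph{not} proved there, so there is no internal proof to compare against; I am evaluating your sketch on its own merits.

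The heart of your construction is the claim that if you fix, for each pair $(v,w)\in\mathcal{G}_n\times\mathcal{G}_n$, a single bridge word $u_{v,w}$ with $v\,u_{v,w}\,w\in\mathcal{G}$, then the orbit closure of all bi-infinite concatenations $\cdots w_{-1}u_{w_{-1},w_0}w_0u_{w_0,w_1}w_1\cdots$ is a subshift of~$X$. This step has a genuine gap. Condition~(1) gives you $w_ju_jw_{j+1}\in\mathcal{G}\subset\mathcal{L}$ for each adjacent pair, but it does \emph{not} give $w_ju_jw_{j+1}u_{j+1}w_{j+2}\in\mathcal{L}$. The natural way to iterate condition~(1) is to regard $w_ju_jw_{j+1}\in\mathcal{G}$ as a new element and glue it to $w_{j+2}$ --- but then the new bridge depends on the entire prefix $w_j u_j w_{j+1}$, not just on the adjacent pair $(w_{j+1},w_{j+2})$. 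In other words, the bridge words you would need are history-dependent, which is incompatible with your finite labeled graph over the vertex set $\mathcal{G}_n$ and hence with soficity. This is not a cosmetic issue: for a general subshift there is no local admissibility criterion, so showing that some \emph{finite-state} assignment of bridges produces only admissible sequences of~$X$ is precisely the nontrivial content of CTY's Theorem~B, and it is the step your sketch treats as automatic. (Your transitivity claim for $X_n$ inherits the same problem, since it also relies on long concatenations being admissible.)

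The second and third stages of your outline --- reduction to ergodic $\mu$ via the ergodic decomposition, extraction of a large $(m,\varepsilon)$-separated set of good cylinders via Katok's entropy formula, and the observation that a sublinear edit budget $e(m)=o(m)$ simultaneously preserves empirical statistics and the exponential growth rate of the separated family --- are the right shape of argument and are plausible once the first stage is repaired. But as written they also omit how you ensure the edited words $\widetilde{x}$ land in $\mathcal{G}_n$ for a single fixed $n$ (your ``for $n$ slightly larger than $m$'' lets $n$ depend on $m$, which contradicts the structure of the statement: one fixed $X_n$ must carry a $\nu$ that is $\eta$-close to $\mu$ with entropy defect less than $\eta$). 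You need a uniform choice of $n$ depending only on $\eta$ and $\mu$, obtained by taking $m$ large and then $n\geq m+\tau$, and then a genuine argument that the periodic-orbit measures on $X_n$ you construct converge to something achieving both estimates. In short: the overall strategy is recognizably the CTY one, but the soficity/admissibility of $X_n$ is the crux, and your sketch does not address it.
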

\begin{remark}\label{Remark 4.2}
     Lemma \ref{Lemma 4.4} can be applied to $\beta$-shifts, $S$-gap shifts, and their factors, see \cite{Climenhaga-Thompson-Yamamoto 2017} for more information.
\end{remark}
From this lemma, we have
\begin{lemma}\label{Lemma 4.5}
	Under the assumptions in Lemma \ref{Lemma 4.4}, $(X,\sigma)$ has refined entropy-dense property and entropy function is upper semicontinuous.
\end{lemma}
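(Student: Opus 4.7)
The lemma has two assertions, which I would treat separately. Upper semicontinuity is immediate: as $(X,\sigma)$ is a subshift of the full shift $(\Sigma_k,\sigma)$, it inherits the expansivity of the full shift, and expansiveness implies that the metric entropy function $\mu \mapsto h_\mu(\sigma)$ is upper semicontinuous on $\cM(X,\sigma)$ (expansive $\Rightarrow$ entropy expansive $\Rightarrow$ upper semicontinuous entropy, as the paper recalls just before Lemma \ref{Lemma 4.3}).

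For the refined entropy-dense property, the plan is to combine Lemma \ref{Lemma 4.4} with the fact that each approximating transitive sofic subshift $X_n$ itself enjoys the refined intermediate entropy property. Fix $\mu \in \cM(X,\sigma)$ and $0 \le c \le h_\mu(\sigma)$. For each $k \ge 1$, I would apply Lemma \ref{Lemma 4.4} with $\eta = 1/k$ to produce an index $n_k$ and an ergodic measure $\nu_k \in \cM^e(X_{n_k},\sigma)$ with $\varrho(\nu_k,\mu) < 1/k$ and $h_{\nu_k}(\sigma) > h_\mu(\sigma) - 1/k$. Since each $X_{n_k}$ is a transitive sofic subshift, it is expansive (hence asymptotically entropy expansive) and, being transitive sofic, carries the specification property---any two words in its language can be joined by a bridge of uniformly bounded length drawn from an irreducible labeled presentation---which in particular implies the approximate product property. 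Lemma \ref{Lemma 4.3} then supplies the refined intermediate entropy property on each $X_{n_k}$.

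With refined intermediate entropy on each $X_{n_k}$ in hand, the conclusion follows quickly. If $c < h_\mu(\sigma)$, then for all sufficiently large $k$ one has $h_{\nu_k}(\sigma) > c$, so $c < \htop(X_{n_k})$ and $\nu_k \in \{\omega \in \cM(X_{n_k},\sigma) : h_\omega(\sigma) \ge c\}$. Refined intermediate entropy on $X_{n_k}$ produces $\tilde\nu_k \in \cM^e(X_{n_k},\sigma)$ with $h_{\tilde\nu_k}(\sigma) = c$ and $\varrho(\tilde\nu_k,\nu_k) < 1/k$, whence $\varrho(\tilde\nu_k,\mu) < 2/k$. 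An ergodic measure on $X_{n_k} \subset X$ is ergodic on $X$, so $\{\tilde\nu_k\}_{k \ge 1}$ is the required sequence. If $c = h_\mu(\sigma)$, simply take $\tilde\nu_k = \nu_k$: the already established upper semicontinuity forces $\limsup h_{\nu_k}(\sigma) \le h_\mu(\sigma) = c$, while Lemma \ref{Lemma 4.4} gives $\liminf h_{\nu_k}(\sigma) \ge c$, so $h_{\nu_k}(\sigma) \to c$.

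The main delicate point I anticipate is the verification that each $X_{n_k}$, as a transitive sofic subshift, has the approximate product property required by Lemma \ref{Lemma 4.3}. This reduces to the standard fact that transitive sofic shifts possess the specification property, but needs to be cited carefully. If one prefers to avoid this step, an alternative is to inspect the construction in the proof of Lemma \ref{Lemma 4.4}: in typical applications the family $\{X_n\}$ is produced as transitive subshifts of finite type built from the generating set $\mathcal{G}$, in which case the $X_n$ possess the shadowing property outright and one can invoke Remark \ref{r3.1} in place of Lemma \ref{Lemma 4.3}. Either way, the remainder of the argument is routine bookkeeping on top of Lemma \ref{Lemma 4.4} and standard facts about subshifts.
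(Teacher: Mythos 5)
Your proof takes essentially the same route as the paper's: upper semicontinuity from expansivity of subshifts, then Lemma \ref{Lemma 4.4} to produce ergodic $\nu_k$ on transitive sofic subshifts $X_{n_k}$ close to $\mu$ with entropy close to $h_\mu(\sigma)$, then the refined intermediate entropy property (via Lemma \ref{Lemma 4.3}) inside each $X_{n_k}$ to adjust the entropy down to exactly $c$. Your treatment of the endpoint $c=h_\mu(\sigma)$ by citing upper semicontinuity directly is in fact slightly cleaner than the paper's, which uses a diagonalization $c_n=\tfrac{n}{n+1}h_\mu(\sigma)\nearrow c$ and also handles $h_\mu(\sigma)=0$ as a separate case that yours subsumes. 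One small inaccuracy to fix: transitive sofic shifts do \emph{not} in general have the specification property---that requires topological mixing, and a period-$p$ transitive SFT already fails it. What holds for transitive sofic shifts, and what Lemma \ref{Lemma 4.3} actually requires, is the weaker approximate product property; the paper asserts this without further comment, so your argument is no less complete than theirs, but the specification claim should be replaced by a direct appeal to the approximate product property.
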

\begin{proof}
	Since $(X,\sigma)$ is a two-sided subshift, we have the entropy function of $(X,\sigma)$ is upper semicontinuous and $\cM^e(X_n,\sigma)\subset\cM^e(X,\sigma)$ for any $n\geq1$. Suppose that $\mu\in\cM(X,\sigma)$ and $0\leq c\leq h_\mu(\sigma)$. When $h_\mu(\sigma)=0$, for any $k\geq1$, by Lemma \ref{Lemma 4.4}, there exists $n\geq1$ and $\nu_k\in\cM^e(X_n,\sigma)\subset\cM^e(X,\sigma)$ such that $\varrho(\nu_k,\mu)<\frac{1}{k}$. Since the entropy function of $(X,\sigma)$ is upper semicontinuous, we have that $\lim\limits_{k\to+\infty}\nu_k=\mu$ and $\lim\limits_{k\to+\infty}h_{\nu_k}(\sigma)=h_{\mu}(\sigma)=0$. Now, suppose that $h_\mu(\sigma)>0$.
	
	When $0\leq c<h_\mu(\sigma)$, choose $\eta>0$ such that $h_\mu(\sigma)-\eta>c$, then by Lemma \ref{Lemma 4.4}, there exists $n\geq1$ and $\nu\in\cM^e(X_n,\sigma)$ such that $\varrho(\nu,\mu)<\eta$ and $h_\nu(\sigma)>h_\mu(\sigma)-\eta>c$. Since transitive sofic subshifts satisfy the assunptions in Lemma \ref{Lemma 4.3}, transitive sofic subshifts have refined intermediate entropy property. Hence, there exists $\tilde{\nu}\in\cM^e(X_n,\sigma)$ such that $\tilde{\nu}\in\cM^e(X_n,\sigma)$, $\varrho(\tilde{\nu},\nu)<\eta$ and $h_{\tilde{\nu}}(\sigma)=c$. Hence, $\tilde{\nu}\in\cM^e(X,\sigma)$, $\varrho(\tilde{\nu},\mu)<2\eta$ and $h_{\tilde{\nu}}(\sigma)=c$.
	
	When $c=h_\mu(\sigma)$, consider $c_n:=\frac{n}{n+1}h_\mu(\sigma)$, then $0\leq c_n<h_\mu(\sigma)$ and $\lim\limits_{n\to+\infty}c_n=c$.
\end{proof}
By Lemma \ref{Lemma 4.5}, Theorem \ref{theorem 3.1} and Lemma \ref{lemma 3.1}, we have
\begin{theorem}\label{Theorem 4.4}
	Suppose that $(X,\sigma)$ is a two-sided subshift of $(\Sigma_k,\sigma)$. Let $\mathcal{L}$ be the language of $X$. Suppose that there exists a set $\mathcal{G}\subset\mathcal{L}$ such that:
	\begin{enumerate}[(1)]
		\item there exists $\tau>0$ such that for every $v,w\in\mathcal{G}$ there exists $u\in\mathcal{L}$ with $|u|\leq\tau$ such that $vuw\in\mathcal{G}$;
		\item $\mathcal{L}$ is edit approachable by $\mathcal{G}$.
	\end{enumerate}
Then for $\varphi\in C(X)$ and $\sup\limits_{\mu\in\cM(X,T)}\int\varphi\mathrm{d\mu}\leq c<P(\sigma,\varphi)$, there is a residual subset $\mathcal{R}$ of $C(X)$, such that for any $f\in\mathcal{R}$, there is a unique $\mu_f^c\in\cM(X,\sigma)$ such that
\begin{enumerate}[(1)]
	\item $\int f\mathrm{d}{\mu_f^c}=\sup\{\int f\mathrm{d\mu}\mid h_\mu(\sigma)+\int\varphi\mathrm{d\mu}\geq c\}$;
	\item $\mu_f^c\in\cM^e(X,\sigma)$;
	\item $h_{\mu_f^c}(\sigma)+\int\varphi\mathrm{d}\mu_f^c=c$.
\end{enumerate}	
\end{theorem}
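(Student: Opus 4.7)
The plan is to assemble Theorem \ref{Theorem 4.4} as a direct concatenation of Lemma \ref{Lemma 4.5}, Lemma \ref{lemma 3.1}, and Theorem \ref{theorem 3.1}, which together supply every ingredient needed. First I would observe that the two language conditions assumed in Theorem \ref{Theorem 4.4} are precisely the hypotheses of Lemma \ref{Lemma 4.5}. Invoking that lemma, I conclude that $(X,\sigma)$ has refined entropy-dense property and that the entropy function $\mu\mapsto h_\mu(\sigma)$ is upper semicontinuous on $\cM(X,\sigma)$. These are exactly the structural hypotheses on $(X,T)$ needed in Section 3.

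Next, with $\varphi\in C(X)$ and $\sup_{\mu\in\cM(X,\sigma)}\int\varphi\,\mathrm{d}\mu\leq c<P(\sigma,\varphi)$, I apply Lemma \ref{lemma 3.1} (with $T=\sigma$) to obtain that $\Delta_{c}\cap\cM^e(X,\sigma)$ is residual in $\Lambda_c$. I would then apply part (1) of Theorem \ref{theorem 3.1} to the residual set $\mathcal{L}:=\Delta_{c}\cap\cM^e(X,\sigma)\subset\Lambda_c$. This yields that the set $\mathcal{R}:=\{f\in\mathcal{U}^{\Lambda_c}_{C(X)}\mid \cM^{\Lambda_c}_{\max}(f)\subset\Delta_{c}\cap\cM^e(X,\sigma)\}$ is residual in $C(X)$.

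Finally I would verify that every $f\in\mathcal{R}$ has the claimed properties. Membership $f\in\mathcal{U}^{\Lambda_c}_{C(X)}$ forces $\cM^{\Lambda_c}_{\max}(f)$ to be a singleton, say $\{\mu_f^c\}$, which by the definition of $\beta^{\Lambda_c}$ gives property (1). The inclusion $\mu_f^c\in\Delta_{c}\cap\cM^e(X,\sigma)$ immediately gives properties (2) and (3). Note that in contrast to Theorem \ref{mt-1}, no full-support clause appears in Theorem \ref{Theorem 4.4}, so Lemma \ref{lem4.9} need not be invoked.

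The only real work lies upstream, in Lemma \ref{Lemma 4.5}, which bridges the Climenhaga–Thompson–Yamamoto language condition (Lemma \ref{Lemma 4.4}) with refined entropy-dense property through an exhaustion of $(X,\sigma)$ by transitive sofic subshifts, where the refined intermediate entropy property from Lemma \ref{Lemma 4.3} can be applied; once that is granted, the argument for Theorem \ref{Theorem 4.4} is routine and purely formal. Thus the principal obstacle is conceptual rather than technical: recognizing that the two-step approximation (first in $\cM(X,\sigma)$ by an ergodic measure on a sofic subsystem of sufficiently high entropy, then adjusting the entropy down to $c$ within that sofic subsystem) is enough to produce the refined entropy-dense property on $(X,\sigma)$ itself, after which Theorem \ref{theorem 3.1} handles everything.
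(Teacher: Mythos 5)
Your proposal is correct and mirrors the paper's own (very terse) argument, which simply asserts that Theorem \ref{Theorem 4.4} follows "by Lemma \ref{Lemma 4.5}, Theorem \ref{theorem 3.1} and Lemma \ref{lemma 3.1}." You have filled in exactly the missing glue: Lemma \ref{Lemma 4.5} supplies the hypotheses of Section 3, Lemma \ref{lemma 3.1} gives residuality of $\Delta_c\cap\cM^e(X,\sigma)$ in $\Lambda_c$, and Theorem \ref{theorem 3.1}(1) transfers this to a residual set in $C(X)$, with uniqueness, ergodicity and $h_{\mu_f^c}(\sigma)+\int\varphi\,\mathrm{d}\mu_f^c=c$ read off from membership of the singleton maximizing set in $\Delta_c\cap\cM^e(X,\sigma)$.
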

\begin{theorem}
	Suppose that $(X,T)$ is a dynamical system, $T$ is a homeomorphism and $(Y,T)$ is a subsystem of $(X,T)$. Suppose that $(Y,T)$ is transitive, has shadowing property and entropy function is upper semicontinuous. Then for $\varphi\in C(X)$ and $\sup\limits_{\mu\in\cM(Y,T)}\int\varphi\mathrm{d\mu}\leq c<\sup\limits_{\mu\in\cM(Y,T)}(h_\mu(T)+\int\varphi\mathrm{d\mu})$, there is a residual subset $\mathcal{R}$ of $C(X)$, such that for any $f\in\mathcal{R}$, there is a unique $\mu_f^c\in\cM(Y,T)$ such that
	\begin{enumerate}[(1)]
		\item $\int f\mathrm{d}{\mu_f^c}=\sup\{\int f\mathrm{d\mu}\mid \mu\in\cM(Y,T)\text{ and }h_\mu(T)+\int\varphi\mathrm{d\mu}\geq c\}$;
		\item $\mu_f^c\in\cM^e(Y,T)$;
		\item $h_{\mu_f^c}(T)+\int\varphi\mathrm{d}\mu_f^c=c$.
	\end{enumerate}
\end{theorem}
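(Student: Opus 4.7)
The plan is to reduce the problem to the already-proved case (Theorem \ref{mt-1}) applied to the subsystem $(Y,T)$, and then lift the resulting residual set in $C(Y)$ to a residual set in $C(X)$ via the restriction map. Since $Y$ is $T$-invariant and compact, every $\mu\in\cM(Y,T)$ extends by zero outside $Y$ to an element of $\cM(X,T)$, and conversely $\cM(Y,T)\subset\cM(X,T)$ is a compact convex subset. The crucial observation is that for any $f\in C(X)$ and any $\mu\in\cM(Y,T)$ one has $\int f\,\mathrm{d}\mu=\int f|_Y\,\mathrm{d}\mu$, so the objects $\beta^{\Lambda_c}(f)$ and $\cM^{\Lambda_c}_{\max}(f)$ depend only on $f|_Y$.

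First, I would set $\Lambda_c:=\{\mu\in\cM(Y,T)\mid h_\mu(T)+\int\varphi\,\mathrm{d}\mu\ge c\}$ and note that by the variational principle applied inside $(Y,T)$, the hypothesis rewrites as $\sup_{\mu\in\cM(Y,T)}\int\varphi|_Y\,\mathrm{d}\mu\le c<P(T|_Y,\varphi|_Y)$. Since $(Y,T)$ is transitive, has the shadowing property, and its entropy function is upper semicontinuous, Remark \ref{r3.1} gives the refined entropy-dense property for $(Y,T)$. Hence Lemma \ref{lemma 3.1} applies inside $(Y,T)$, so $\Delta_c\cap\cM^e(Y,T)$ is residual in $\Lambda_c$, and in particular $\Lambda_c^\star$ is residual in $\Lambda_c$. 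Corollary \ref{corollary-1}(1) then yields a residual subset $L\subset C(Y)$ such that for every $g\in L$, the set $\cM^{\Lambda_c}_{\max}(g)$ is a singleton $\{\nu_g^c\}$ with $\nu_g^c\in\cM^e(Y,T)$ and $h_{\nu_g^c}(T)+\int\varphi\,\mathrm{d}\nu_g^c=c$.

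Next, I would pull $L$ back to $C(X)$ via the restriction map $r\colon C(X)\to C(Y)$, $f\mapsto f|_Y$. The main step is to show $r^{-1}(L)$ is residual in $C(X)$. Writing $L\supset\bigcap_{n\ge1}L_n$ with each $L_n$ open and dense in $C(Y)$, continuity of $r$ makes each $r^{-1}(L_n)$ open, and density follows from the Tietze extension theorem: given $f\in C(X)$ and $\varepsilon>0$, pick $g\in L_n$ with $\|g-f|_Y\|_{C(Y)}<\varepsilon$ and extend $g-f|_Y$ to some $h\in C(X)$ with $\|h\|_{C(X)}=\|g-f|_Y\|_{C(Y)}<\varepsilon$; then $f+h\in r^{-1}(L_n)$ and $\|(f+h)-f\|_{C(X)}<\varepsilon$. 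Therefore $\mathcal{R}:=r^{-1}(L)$ is residual in $C(X)$. For any $f\in\mathcal{R}$, the identity $\cM^{\Lambda_c}_{\max}(f)=\cM^{\Lambda_c}_{\max}(f|_Y)=\{\nu_{f|_Y}^c\}$ lets us set $\mu_f^c:=\nu_{f|_Y}^c$, which satisfies (1), (2) and (3).

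The main obstacle is purely logistical: verifying that the pullback of a residual set under $r$ remains residual, which hinges on having a norm-preserving (or at least norm-controlled) extension operator, supplied here by Tietze. Once this reduction is in place, every remaining piece is inherited from the analysis already carried out for $(Y,T)$, so no new dynamical input beyond what is assumed for the subsystem is needed. Note in particular that one cannot expect the full-support conclusion analogous to Theorem \ref{mt-1}(3), since the measures produced live on $Y$; accordingly the statement correctly omits such a condition.
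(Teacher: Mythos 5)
Your proof is correct, and your reduction to the subsystem is valid, but the paper takes a shorter route that bypasses the pullback step entirely. Because $T$ is a homeomorphism and $Y$ is a compact $T$-invariant set, $\cM(Y,T)$ sits inside $\cM(X,T)$ as a compact convex subset, and a measure $\hat\mu\in\cM(X,T)$ arising from a convex decomposition $\mu=(1-\lambda)\hat\mu+\lambda\nu$ with $\mu,\nu$ supported on $Y$ is automatically supported on $Y$. So the paper simply sets $\Lambda:=\{\mu\in\cM(Y,T)\mid h_\mu(T)+\int\varphi\,\mathrm{d}\mu\ge c\}\subset\cM(X,T)$, verifies via Lemma~\ref{lemma 3.1} (applied to the subsystem $(Y,T)$, which has the refined entropy-dense property) that $\Lambda^\star$ is residual in $\Lambda$, and invokes Corollary~\ref{corollary-1} directly with ambient function space $C(X)$. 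This avoids any need to extend functions. Your alternative — first obtaining a residual set $L\subset C(Y)$ from Corollary~\ref{corollary-1} applied internally to $(Y,T)$, then pulling back along the restriction map $r\colon C(X)\to C(Y)$ and using Tietze extension to show $r^{-1}(L)$ is residual — is also sound, and the Tietze argument for density of each $r^{-1}(L_n)$ is carried out correctly. What your version buys is an explicit separation of the internal argument on $(Y,T)$ from the passage to $C(X)$; what it costs is the extra pullback verification that the paper's choice of $\Lambda\subset\cM(X,T)$ makes superfluous. Your closing remark, that the full-support conclusion must be dropped because the produced measures live on $Y$, is a correct and useful observation consistent with the statement as given.
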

\begin{proof}
	Since $T$ is a homeomorphism, we have $\cM(Y,T)\subset\cM(X,T)$ and $\cM^e(Y,T)\subset\cM^e(X,T)$. Let $\Lambda=\{\mu\in\cM(Y,T)\mid h_\mu(T)+\int\varphi\mathrm{d\mu}\geq c\}$. Then $\{\mu\in\cM(Y,T)\mid h_\mu(T)+\int\varphi\mathrm{d\mu}=c\}\cap\cM^e(Y,T)\subset\Lambda^\star$. By Lemma \ref{lemma 3.1}, $\Lambda^\star$ is residual in $\Lambda$. By Corollary \ref{corollary-1}, we finish the proof.
\end{proof}
\begin{remark}
	This theorem can be applied to $C^1$ diffeomorphisms on a compact Riemannian manifold which have a smale's horseshoe as a subsystem. For example, $C^{1+\alpha}$ $(\alpha>0)$ diffeomorphisms on a compact Riemannian manifold with an ergodic hyperbolic continuous measure \cite[Corollary S.5.4]{Katok-Hasselblatt}.
\end{remark}
\section{Flows: proofs of Theorem \ref{mt-2}, \ref{mt-3} and \ref{mt-4}}
Suppose that $(\phi)_{t\in\mathbb{R}}$ is a continuous flow on a compact metric space $X$. Let $\cM(X,\phi_t)$, $\cM^{e}(X,\phi_t)$ denote the spaces of invariant, ergodic probability measures of the flow $(\phi)_{t\in\mathbb{R}}$, respectively. As for the ergodic optimisation for flows, see \cite{MSV2020,Yang-Zhang 2020} for more information.

Suppose that $\Lambda$ is a nonempty and compact subset of $\cM(X,\phi_t)$, similarly, define $\hat{\Lambda}:=\{\nu\in\Lambda\mid\text{ for any }\hat{\mu}\in\cM(X,\phi_t)\text{ such that }\mu=(1-\lambda)\hat{\mu}+\lambda\nu\text{ for some }0<\lambda<1\text{ and }\mu\in\Lambda\text{, we have }\hat{\mu}\in\Lambda\}$. Define $\Lambda^{\star}:=\hat{\Lambda}\cap\cM^e(X,\phi_t)$. In \cite[Remark 2]{Yang-Zhang 2020}, D. Yang and J. Zhang have shown how to extend Morris's results (Lemma \ref{lem3.5}) and Jenkinson's results (Lemma \ref{lem3.6}) from discrete systems to continuous systems. Follow the same lines, we can generalize Corollary \ref{corollary-3} and Corollary \ref{corollary-1} to flows, the proofs are omitted.
\begin{theorem}\label{th5.1}
	Suppose that $(\phi)_{t\in\mathbb{R}}$ is a continuous flow on a compact metric space $X$, $\Lambda$ is a nonempty and compact convex subset of $\cM(X,\phi_t)$, $\Lambda^{\star}$ is dense in $\Lambda$ and $\mathcal{L}$ is an open and dense subset of $\Lambda$. Then $L:=\{f\in C(X)\mid \cM^\Lambda_{max}(f)\subset\mathcal{L}\}$ is an open and dense subset of $C(X)$.
\end{theorem}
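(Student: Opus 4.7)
The plan is to mimic the proof of Corollary \ref{corollary-3} step by step, with flow-adapted versions of the underlying lemmas. First I would establish that $L$ is open in $C(X)$, which is the analogue of Theorem \ref{theorem-1} (1). This is purely soft: if $\{f_n\}$ converges uniformly to $f$ and each $f_n \notin L$, pick $\mu_n \in \cM^{\Lambda}_{max}(f_n) \cap (\Lambda\setminus\mathcal{L})$; since $\Lambda\setminus\mathcal{L}$ is weak-$\ast$ compact, extract a subsequence with limit $\mu \in \Lambda\setminus\mathcal{L}$, and the routine estimate $|\int f_n \, d\mu_n - \int f \, d\mu| \to 0$ from Lemma \ref{lem3.1} (which uses only continuity of $f \mapsto \int f \, d\mu$ and uniform convergence) shows $\mu \in \cM^{\Lambda}_{max}(f)$. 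No discrete-time feature is used, so this transfers verbatim.

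Next I would prove density of $L$ in $C(X)$ by adapting the proof of Theorem \ref{theorem-1} (3). Given $f \in C(X)$ and $\varepsilon > 0$, I pick $\mu \in \cM^{\Lambda}_{max}(f)$ and, since $\Lambda^\star$ is dense in $\Lambda$ and $\mathcal{L}$ is open and dense, choose $\nu \in \mathcal{L}\cap\Lambda^\star$ with $\beta^\Lambda(f) - \int f \, d\nu < \varepsilon$. The task is then to produce $g$ with $\|f-g\| < \varepsilon$ and $\cM^{\Lambda}_{max}(g) = \{\nu\} \subset \mathcal{L}$, which gives $g \in L$. The construction mirrors Lemma \ref{lem3.7}: apply the Bishop--Phelps theorem (Lemma \ref{lem3.2}, valid in any Banach space) to the convex continuous functional $\beta^\Lambda$ on $C(X)$, to obtain a tangent functional close to $\nu$ at some $\tilde g$ close to $f$; decompose the resulting maximising measure as $(1-\lambda)\hat\mu + \lambda\nu$ using the flow analogue of Morris's Lemma \ref{lem3.5} (noted in \cite[Remark 2]{Yang-Zhang 2020}); use $\nu \in \Lambda^\star$ to deduce $\hat\mu \in \Lambda$; and finally add a small multiple $\delta_0 \hat g$, where $\hat g$ is produced by the flow version of Jenkinson's Lemma \ref{lem3.6} with $\cM_{max}(\hat g) = \{\nu\}$ (again available from \cite[Remark 2]{Yang-Zhang 2020}), to force uniqueness.

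The main obstacle is verifying the flow analogue of Lemma \ref{lem3.3}: that a bounded linear functional $F$ on $C(X)$ tangent to $\beta^\Lambda$ at some $f$ must lie in $\Lambda$. The discrete argument shows $F(g) \leq \beta^\Lambda(g)$ for every $g \in C(X)$ and then invokes \cite[Proposition 1.2.5]{Ollagnier1985}. For flows one needs the same representation statement with $\cM(X,\phi_t)$ in place of $\cM(X,T)$, i.e.\ that a positive unit functional dominated pointwise by the upper envelope of integration against $\phi_t$-invariant probability measures is itself $\phi_t$-invariant. This is standard convex duality: $\cM(X,\phi_t)$ is the intersection over $t\in\mathbb{R}$ of the closed affine hyperplanes $\{F : F(g\circ\phi_t)=F(g) \text{ for all } g\}$, and the Hahn--Banach/support-functional argument from the discrete case carries over essentially unchanged. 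Once this is in place, all the pieces assemble exactly as in the discrete proof, and the remaining steps (Lemma \ref{lem3.4} via Bishop--Phelps, openness of $L$, and the final $\varepsilon/\delta$ bookkeeping in Lemma \ref{lem3.7}) are purely functional-analytic and require no modification.
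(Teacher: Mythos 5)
Your proposal is correct and matches the paper's approach: the paper simply cites \cite[Remark 2]{Yang-Zhang 2020} for the flow versions of Morris's Lemma \ref{lem3.5} and Jenkinson's Lemma \ref{lem3.6} and then declares that Corollary \ref{corollary-3} and Corollary \ref{corollary-1} generalize to flows ``following the same lines,'' with the proofs omitted. You actually supply more detail than the paper, in particular correctly isolating the flow analogue of Lemma \ref{lem3.3} (that a functional tangent to $\beta^\Lambda$ must lie in $\Lambda$, via the Ollagnier-type duality argument) as the one further point that needs to be checked.
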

\begin{theorem}\label{th5.2}
	Suppose that $(\phi)_{t\in\mathbb{R}}$ is a continuous flow on a compact metric space $X$. Then 
	\begin{enumerate}[(1)]
		\item If $\Lambda^{\star}$ is dense in $\Lambda$ and $\mathcal{L}\subset\Lambda$ is residual in $\Lambda$, then $L:=\{f\in\mathcal{U}^\Lambda_{C(X)}\mid \cM^\Lambda_{max}(f)\subset\mathcal{L}\}$ is residual in $C(X)$.
		\item If $\Lambda^{\star}$ is residual in $\Lambda$ and $N\subset\mathcal{U}^\Lambda_{C(X)}$ is residual in $\mathcal{U}^\Lambda_{C(X)}$, then $\mathcal{N}:=\bigcup\limits_{f\in N}\cM^\Lambda_{max}(f)$ is residual in $\Lambda$. 	
	\end{enumerate} 
\end{theorem}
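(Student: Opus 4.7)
The plan is to mirror in the flow setting the deduction of Corollary \ref{corollary-1} given for discrete systems. That corollary was assembled from Theorem \ref{theorem-1}, Corollary \ref{corollary-3} and Theorem \ref{the1.2}; it therefore suffices to verify flow analogues of these four tools and then to recycle the resulting bookkeeping. The building blocks underlying Theorem \ref{theorem-1} are Lemmas \ref{lem3.1}--\ref{lem3.7}. Of these, Lemmas \ref{lem3.1} and \ref{lem3.3} only use weak$^*$ compactness of $\Lambda$ and the continuity of $\mu \mapsto \int f\,\mathrm{d}\mu$, Lemma \ref{lem3.2} is a purely Banach-space statement, and the two genuinely dynamical inputs are Jenkinson's realization (Lemma \ref{lem3.6}) and Morris's convex-decomposition (Lemma \ref{lem3.5}) — whose flow versions are exactly what \cite[Remark 2]{Yang-Zhang 2020} supplied. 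Lemma \ref{lem3.4}, and hence Lemma \ref{lem3.7}, then go through unchanged, and rerunning the arguments for Theorem \ref{theorem-1} (1)--(4) in the flow setting yields the flow analogues; Theorem \ref{th5.1} follows exactly as Corollary \ref{corollary-3} did in the discrete case.

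For part (1) of Theorem \ref{th5.2}, I would write $\mathcal{L} \supset \bigcap_{j \geq 1} \mathcal{L}_j$ with each $\mathcal{L}_j$ open and dense in $\Lambda$. Theorem \ref{th5.1} renders each
\[
L_j := \{f \in C(X) \mid \cM^\Lambda_{max}(f) \subset \mathcal{L}_j\}
\]
open and dense in $C(X)$. Combined with the flow analogue of Theorem \ref{the1.2} (whose Appendix A proof never uses the discrete time structure), which makes $\mathcal{U}^\Lambda_{C(X)}$ a dense $G_\delta$ subset of $C(X)$, the inclusion $L \supset \mathcal{U}^\Lambda_{C(X)} \cap \bigcap_{j} L_j$ shows that $L$ is residual in $C(X)$.

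For part (2), since $\mathcal{U}^\Lambda_{C(X)}$ is residual in $C(X)$, so is $N$, and I write $N \supset \bigcap_{j} N_j$ with each $N_j$ open and dense in $C(X)$. The flow version of Theorem \ref{theorem-1} (4) makes each
\[
\mathcal{N}_j := \Lambda^\star \cap \bigcup_{f \in N_j \cap \mathcal{U}^\Lambda_{C(X)}} \cM^\Lambda_{max}(f)
\]
open in $\Lambda^\star$; the flow analogues of Theorem \ref{theorem-1} (2) and (3) show that each $\mathcal{N}_j$ is also dense in $\Lambda^\star$, using the hypothesis that $\Lambda^\star$ is (in particular) dense in $\Lambda$. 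Hence $\mathcal{N} \cap \Lambda^\star \supset \bigcap_{j} \mathcal{N}_j$ is residual in $\Lambda^\star$, and residuality of $\Lambda^\star$ in $\Lambda$ delivers the conclusion. The main obstacle, and the only nontrivial point, is to certify that none of the averaging arguments hidden in Lemmas \ref{lem3.5} and \ref{lem3.6} secretly rely on discrete iteration — which is precisely the observation already carried out in \cite{Yang-Zhang 2020}, so no new dynamical input is required beyond invoking that source.
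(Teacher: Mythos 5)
Your proposal is correct and follows essentially the same route the paper takes: the paper itself omits the proof, pointing to \cite[Remark 2]{Yang-Zhang 2020} for the flow versions of Lemmas \ref{lem3.5} and \ref{lem3.6} and noting that the remaining arguments carry over verbatim, which is exactly the bookkeeping you spell out. Your observation that only these two lemmas are genuinely discrete-time-dependent, while Lemmas \ref{lem3.1}--\ref{lem3.4}, \ref{lem3.7} and Theorem \ref{the1.2} are time-structure-agnostic, is the key point and matches the authors' (unwritten) reasoning.
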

 Now, we consider the suspension flows. 
 
 Suppose that $(X,T)$ is a dynamical system and $T$ is a homeomorphism. Let $\rho:X\mapsto(0,+\infty)$ be a continuous roof function. We define the \emph{suspension space} to be $$X_\rho=\{(x,s)\in X\times [0,+\infty)\mid0\leq s\leq\rho(x)\}/\thicksim,$$ where the equivalence relation $\thicksim$ identifies $(x,\rho(x))$ with $(Tx,0)$, for all $x\in X$. Let $\pi$ denote the quotient map from $X\times[0,+\infty)$ to $X_\rho$. The \emph{suspension flow} $(\phi)_{t\in\mathbb{R}}$ on $X_\rho$ is defined by $$\phi_t(x,s):=\pi(x,s+t).$$ Let $C(X_\rho)$ denote the space of real continuous functions on $X_\rho$. Given $g\in C(X_\rho)$, associate the function $\varphi_g\in C(X)$ by $\varphi_g(x)=\int_{0}^{\rho(x)}g(x,t)\mathrm{d}t$. Given $\mu\in\cM(X,T)$, associate $\mu_\rho\in\cM(X_\rho,\phi_t)$ given by $$\int g\mathrm{d}\mu_\rho=\frac{\int\varphi_g\mathrm{d\mu}}{\int\rho\mathrm{d\mu}}\text{ , for any }g\in C(X_\rho).$$ It can be checked that when $\mu\in\cM^e(X,T)$, we have $\mu_\rho\in\cM^e(X_\rho,\phi_t)$. Since $\rho(x)$ is bounded away from zero, it is well known that the map $$\mathcal{P}:\cM(X,T)\to\cM(X_\rho,\phi_t)\text{ given by }\mu\mapsto\mu_\rho$$ is a bijection \cite[Remark 2.15]{MSV2020}. Since it is continuous, it is a homeomorphism. Due to Abramov's theorem \cite{Abramov1959}, $h_{\mu_\rho}(\phi_t)=h_\mu(T)/\int\rho\mathrm{d\mu}$. Hence, $$h_{\mu_\rho}(\phi_t)+\int\psi\mathrm{d}\mu_\rho=\frac{h_{\mu}(\phi_t)+\int\varphi_\psi\mathrm{d}\mu}{\int\rho\mathrm{d\mu}},\text{ for any } \psi\in C(X_\rho).$$ Suspension flows are endowed with a natural metric, the Bowen-Walters metric \cite{Bowen-Walters1972}.

\begin{lemma}\label{l5.1}
	Suppose that $(X,T)$ is a dynamical system with refined entropy-dense property and $T$ is a homeomorphism. Let $\rho:X\mapsto(0,+\infty)$ be a continuous roof function. The flow $(\phi_t)_{t\in\mathbb{R}}$ on $X_\rho$ is the suspension flow associate with $\rho$. Then $(X_\rho,\phi_t)$ has refined entropy-dense property.
\end{lemma}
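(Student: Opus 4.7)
The plan is to transport the refined entropy-dense property from $(X,T)$ to $(X_\rho,\phi_t)$ via the bijection $\mathcal{P}:\cM(X,T)\to\cM(X_\rho,\phi_t)$, $\mu\mapsto\mu_\rho$. Since $\mathcal{P}$ is a homeomorphism (cited from \cite[Remark 2.15]{MSV2020}) and preserves ergodicity, every $\tilde\mu\in\cM(X_\rho,\phi_t)$ can be written uniquely as $\tilde\mu=\mu_\rho$ for some $\mu\in\cM(X,T)$, and Abramov's formula gives $h_{\tilde\mu}(\phi_t)=h_\mu(T)/\int\rho\,\mathrm{d}\mu$.

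Fix $\tilde\mu\in\cM(X_\rho,\phi_t)$ and $0\le c\le h_{\tilde\mu}(\phi_t)$. Write $\tilde\mu=\mu_\rho$ and set $c':=c\int\rho\,\mathrm{d}\mu$. Because $c\le h_\mu(T)/\int\rho\,\mathrm{d}\mu$, we have $0\le c'\le h_\mu(T)$. First I would apply the refined entropy-dense property of $(X,T)$ to produce a sequence $\{\mu_n\}_{n=1}^{+\infty}\subset\cM^e(X,T)$ with $\mu_n\to\mu$ in the weak$^\ast$ topology and $h_{\mu_n}(T)\to c'$. Then I set $\tilde\mu_n:=(\mu_n)_\rho\in\cM^e(X_\rho,\phi_t)$; continuity of $\mathcal{P}$ yields $\tilde\mu_n\to\tilde\mu$, and continuity of $\mu\mapsto\int\rho\,\mathrm{d}\mu$ together with Abramov's formula gives
\[
h_{\tilde\mu_n}(\phi_t)=\frac{h_{\mu_n}(T)}{\int\rho\,\mathrm{d}\mu_n}\;\longrightarrow\;\frac{c'}{\int\rho\,\mathrm{d}\mu}=c,
\]
which is exactly what is required.

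Two minor points need care rather than being genuine obstacles. If $\int\rho\,\mathrm{d}\mu=0$ could occur, the division would break, but since $\rho$ is continuous and strictly positive on the compact space $X$ one has $\int\rho\,\mathrm{d}\mu\ge\min\rho>0$ uniformly in $\mu$, so the denominators are bounded away from zero and the limit passage is legitimate. The fact that $\mathcal{P}$ sends ergodic to ergodic measures (used to conclude $\tilde\mu_n\in\cM^e(X_\rho,\phi_t)$) is standard for suspension flows and will be invoked without further argument. Everything else reduces to the (already-stated) continuity and Abramov identities recorded just before the lemma, so no additional input beyond the hypotheses is needed.
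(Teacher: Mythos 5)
Your proof is correct and follows essentially the same approach as the paper: pass through the homeomorphism $\mathcal{P}$, rescale the target entropy value by $\int\rho\,\mathrm{d}\mu$ via Abramov's formula, apply the refined entropy-dense property of $(X,T)$, and push the resulting ergodic sequence forward. The only difference is that you spell out the limit step $\int\rho\,\mathrm{d}\mu_n\to\int\rho\,\mathrm{d}\mu$ and the uniform positive lower bound on the denominator, which the paper leaves implicit.
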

\begin{proof}
Given $\tilde{\mu}\in\cM(X,T)$, there exists a unique $\mu\in\cM(X,T)$ such that $\mu_\rho=\tilde{\mu}$, by Abramov's theorem, $h_{\tilde{\mu}}(\phi_t)=h_{\mu_\rho}(\phi_t)=h_\mu(T)/\int\rho\mathrm{d\mu}$. Given $0\leq c\leq h_{\tilde{\mu}}(\phi_t)$, let $\tilde{c}=c\int\rho\mathrm{d\mu}$, then $0\leq\tilde{c}\leq h_\mu(T)$. Since $(X,T)$ has refined entropy-dense property, there exists a sequence $\{{\mu}_{n}\}_{n=1}^{+\infty}$ of $\cM^e(X,T)$ such that $\lim\limits_{n\to+\infty}{\mu}_{n}={\mu}$ and $\lim\limits_{n\to+\infty}h_{{\mu}_{n}}(\phi_t)=\tilde{c}$. Hence, $(({\mu}_{n})_\rho)_{n\geq1}\subset\cM^e(X_\rho,\phi_t)$, $\lim\limits_{n\to+\infty}({\mu}_{n})_\rho={\mu_\rho}=\tilde{\mu}$, $\lim\limits_{n\to+\infty}h_{({\mu}_{n})_\rho}(\phi_t)=c$.
\end{proof}
 Since $\mathcal{P}$ is a homeomorphism. By Abramov's theorem, when the entropy function of $(X,T)$ is upper semicontinuous, we have the entropy function of $(X_\rho,\phi_t)$ is upper semicontinuous. Similar, given $\psi\in C(X_\rho)$, for any $\inf\limits_{\mu\in\cM(X_\rho,\phi_t)}(h_\mu(\phi_t)+\int\psi\mathrm{d\mu})\leq c<P(\phi_t,\psi)$, denote $\Lambda_c:=\{\mu\in\cM(X_\rho,\phi_t)\mid h_\mu(\phi_t)+\int\psi\mathrm{d\mu}\geq c\}$ and $\Delta_{c}:=\{\mu\in\cM(X_\rho,\phi_t)\mid h_\mu(\phi_t)+\int\psi\mathrm{d\mu}=c\}$. Then $\Lambda_c$ is a nonempty and compact convex subset of $\cM(X_\rho,\phi_t)$. By Lemma \ref{lemma 3.1}, $\Lambda_{c}^{\star}$ is residual in $\Lambda_{c}$. By Theorem \ref{th5.2}, we have 
 \begin{theorem}\label{th5.3}
 		Suppose that $(X,T)$ is a dynamical system with refined entropy-dense property. $T$ is a homeomorphism and entropy function is upper semicontinuous. Let $\rho:X\mapsto(0,+\infty)$ be a continuous roof function. The flow $(\phi_t)_{t\in\mathbb{R}}$ on $X_\rho$ is the suspension flow associate with $\rho$. Then for $\psi\in C(X_\rho)$ and $\sup\limits_{\mu\in\cM(X_\rho,\phi_t)}\int \psi\mathrm{d\mu}\leq c<P(\phi_t,\psi)$, there is a residual subset $\mathcal{R}$ of $C(X_\rho)$, such that for any $f\in\mathcal{R}$, there is a unique $\mu_f^c\in\cM(X_\rho,\phi_t)$ such that
 	\begin{enumerate}[(1)]
 		\item $\int f\mathrm{d}{\mu_f^c}=\sup\{\int f\mathrm{d\mu}\mid h_\mu(\phi_t)+\int\psi\mathrm{d\mu}\geq c\}$;
 		\item $\mu_f^c\in\cM^e(X_\rho,\phi_t)$;
 		\item $h_{\mu_f^c}(\phi_t)+\int\psi\mathrm{d}\mu_f^c=c$.
 	\end{enumerate}
 \end{theorem}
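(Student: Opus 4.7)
The plan is to reduce Theorem \ref{th5.3} to a direct application of Theorem \ref{th5.2}(1) with $\Lambda=\Lambda_c$ and $\mathcal{L}=\Delta_c\cap\cM^e(X_\rho,\phi_t)$. This requires three preparatory ingredients: that $\Lambda_c$ is a nonempty compact convex subset of $\cM(X_\rho,\phi_t)$; that $\Delta_c\cap\cM^e(X_\rho,\phi_t)$ is residual in $\Lambda_c$; and that $\Lambda_c^{\star}$ is dense in $\Lambda_c$. Once these are in place the theorem is essentially immediate.

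First I would verify the structural properties of $\Lambda_c$. Nonemptiness follows from the variational principle applied to $c<P(\phi_t,\psi)$; convexity follows from affinity of both $h_\bullet(\phi_t)$ and $\mu\mapsto\int\psi\,\mathrm{d}\mu$; compactness follows because the entropy function of $(X_\rho,\phi_t)$ is upper semicontinuous. The upper semicontinuity in the suspension is read off from the hypothesis on $(X,T)$ via Abramov's formula $h_{\mu_\rho}(\phi_t)=h_\mu(T)/\int\rho\,\mathrm{d}\mu$ and the fact that $\mathcal{P}:\cM(X,T)\to\cM(X_\rho,\phi_t)$ is a homeomorphism.

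Next I would replay the proof of Lemma \ref{lemma 3.1} verbatim in the flow setting to show that $\Delta_c\cap\cM^e(X_\rho,\phi_t)$ is residual in $\Lambda_c$. Set
\[
\Lambda_c(k):=\left\{\mu\in\cM(X_\rho,\phi_t):\ c\leq h_\mu(\phi_t)+\int\psi\,\mathrm{d}\mu<c+\tfrac{1}{k}\right\},
\]
which is open in $\Lambda_c$ by upper semicontinuity. For $\mu\in\Lambda_c$ with $h_\mu(\phi_t)+\int\psi\,\mathrm{d}\mu>c$, pick $0<c_1<1/k$ with $0<c+c_1-\int\psi\,\mathrm{d}\mu<h_\mu(\phi_t)$ (positivity uses the hypothesis $\sup\int\psi\,\mathrm{d}\mu\leq c$); Lemma \ref{l5.1} then supplies a sequence of ergodic measures $\mu_n\to\mu$ with $h_{\mu_n}(\phi_t)\to c+c_1-\int\psi\,\mathrm{d}\mu$, so $\mu_n\in\Lambda_c(k)\cap\cM^e(X_\rho,\phi_t)$ eventually. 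For $\mu$ with $h_\mu(\phi_t)+\int\psi\,\mathrm{d}\mu=c$, first form $\mu_n=(1-1/n)\mu+(1/n)\mu'$ with $\mu'$ chosen so that $h_{\mu'}(\phi_t)+\int\psi\,\mathrm{d}\mu'$ is close to $P(\phi_t,\psi)$, and apply the previous case to each $\mu_n$. Since $\cM^e(X_\rho,\phi_t)$ is a $G_\delta$, density of $\Lambda_c(k)\cap\cM^e(X_\rho,\phi_t)$ upgrades to residuality of $\Delta_c\cap\cM^e(X_\rho,\phi_t)=\bigcap_k(\Lambda_c(k)\cap\cM^e(X_\rho,\phi_t))$ in $\Lambda_c$.

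Finally I would observe $\Delta_c\cap\cM^e(X_\rho,\phi_t)\subset\Lambda_c^{\star}$. Indeed, if $\nu$ is ergodic with $h_\nu(\phi_t)+\int\psi\,\mathrm{d}\nu=c$ and $\mu=(1-\lambda)\hat\mu+\lambda\nu\in\Lambda_c$ for some $\hat\mu\in\cM(X_\rho,\phi_t)$ and $0<\lambda<1$, then affinity gives $(1-\lambda)(h_{\hat\mu}(\phi_t)+\int\psi\,\mathrm{d}\hat\mu)+\lambda c\geq c$, hence $\hat\mu\in\Lambda_c$; so $\nu\in\hat{\Lambda_c}\cap\cM^e(X_\rho,\phi_t)=\Lambda_c^{\star}$. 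Combined with the previous step, $\Lambda_c^{\star}$ is residual in $\Lambda_c$. Applying Theorem \ref{th5.2}(1) with $\mathcal{L}=\Delta_c\cap\cM^e(X_\rho,\phi_t)$ yields a residual $\mathcal{R}\subset C(X_\rho)$ such that every $f\in\mathcal{R}$ has a unique maximiser $\mu_f^c\in\Lambda_c$ lying in $\mathcal{L}$, which is precisely (1)--(3). The only real obstacle is the flow version of Lemma \ref{lemma 3.1}; the key check is that refined entropy-density for $(X_\rho,\phi_t)$ (Lemma \ref{l5.1}) can be invoked with a strictly positive target entropy, which is ensured by the hypothesis $\sup\int\psi\,\mathrm{d}\mu\leq c$.
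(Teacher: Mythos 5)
Your proposal is correct and follows essentially the same route as the paper: establish upper semicontinuity of the flow entropy function via Abramov's formula and the homeomorphism $\mathcal{P}$, transfer Lemma~\ref{lemma 3.1} to the suspension using Lemma~\ref{l5.1} to conclude $\Delta_c\cap\cM^e(X_\rho,\phi_t)$ (hence $\Lambda_c^\star$) is residual in $\Lambda_c$, and then invoke Theorem~\ref{th5.2}(1). You are somewhat more explicit than the paper, which cites Lemma~\ref{lemma 3.1} directly without spelling out the flow adaptation, but the content is the same; the only tiny imprecision is your remark that the target entropy must be ``strictly positive'' for refined entropy-density, when in fact the definition permits target $0$ — the hypothesis $\sup\int\psi\,\mathrm{d}\mu\le c$ is needed to keep the target in $[0,h_\mu]$, not to keep it positive.
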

Theorem \ref{mt-2} is directly from Theorem \ref{th5.3}.

Now, we will consider geometric Lorenz attractors and singular hyperbolic attractors. The Lorenz attractor wae observed by E. Lorenz \cite{Lorenz1963} in 1963, whose dynamics sensitively dependent on initial conditions. Later, a geometric model for the Lorenz attractor was introduced by J. Guckenheimer \cite{Guckenheimer1976} and V. S. Afra\u{i}movi\v{c}, V. V. Bykov and L. P. Sil'nikov \cite{A-B-S 1977}, nowadays known as \emph{geometric Lorenz attractors} \cite[Definition 3.5]{STVW2021}. In the study of $C^1$ robustly transitive flows, C. A. Morales, M. J. Pacifico and E. Pujals \cite{M-M-P 2004} introduced the concept of \emph{singular hyperbolic flows} \cite[Definition 3.2]{STVW2021}, which include the geometric Lorenz attractors as a special class of examples.
\begin{lemma}\cite[Corollary 4.14]{STVW2021}\label{l5.2}
The following holds:
\begin{enumerate}[(1)]
	\item There exists a residual subset $\mathcal{A}^r\subset\mathscr{X}^r(M^3)$ where $r\geq2$ such that if $\Lambda$ is a geometric Lorenz attractor of $X\in\mathcal{A}^r$, then $\Lambda$ has entropy-dense property;
	\item There exists a residual subset $\mathcal{A}\subset\mathscr{X}^1(M)$ such that if $\Lambda$ is a non-trivial singular hyperbolic attractor of $X\in\mathcal{A}$, then $\Lambda$ has entropy-dense property.	
\end{enumerate}
\end{lemma}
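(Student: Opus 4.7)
The plan is to reduce entropy-density on the singular attractor $\Lambda$ to a horseshoe-approximation statement. On any basic hyperbolic set $H\subset\Lambda$ (a horseshoe disjoint from the singular set), the flow is expansive, transitive and has the shadowing property, so the flow analogue of \cite[Theorem B]{LO2018} gives entropy-density on $H$. Therefore it suffices to establish the following claim: for every ergodic $\mu\in\cM(\Lambda)$ and every $\eta>0$, there exists a basic hyperbolic set $H\subset\Lambda$ carrying an ergodic measure $\nu$ with $\varrho(\mu,\nu)<\eta$ and $h_\nu>h_\mu-\eta$. Non-ergodic $\mu$ are then handled in the usual way, by approximating each piece of the ergodic decomposition separately and taking a convex combination; the resulting approximants can be made ergodic by a further application of horseshoe-density, since a finite convex combination of ergodic measures on horseshoes lies in the weak-$\ast$ closure of a single ergodic measure on a larger horseshoe that contains all of them.

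For part (1), I would use the geometric model of the Lorenz attractor explicitly. The first-return map to a cross-section $\Sigma$ of $\Lambda$ contracts strongly along an invariant stable foliation whose quotient produces a one-dimensional piecewise $C^2$ expanding Lorenz-like map $f$ on an interval. This $f$ is (topologically) semiconjugate to a mixing subshift of finite type, and in particular it has refined entropy-dense property by the results already used in Theorem~\ref{mt-1}. Any ergodic flow measure projects to an $f$-invariant measure, so weak-$\ast$ and entropy approximations by periodic $f$-measures lift to ergodic flow measures on horseshoes of the suspension, with entropies controlled by Abramov's formula. The residual set $\mathcal{A}^r$ enters only to rule out degenerate combinatorial configurations of the kneading sequence, which is a $C^r$-generic condition for $r\geq 2$.

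For part (2), the main ingredient is a $C^1$-generic horseshoe-approximation theorem in the singular hyperbolic setting, in the spirit of Crovisier--Yang and of \cite{STVW2021}. It asserts that for $X$ in a suitable residual $\mathcal{A}\subset\mathscr{X}^1(M)$ and $\Lambda$ a non-trivial singular hyperbolic attractor of $X$, the ergodic measures supported on horseshoes in $\Lambda$ are weak-$\ast$ dense in $\cM(\Lambda)$ and simultaneously achieve entropies arbitrarily close to the entropy of any prescribed ergodic target. This is obtained by a Katok-type closing argument adapted to the singular hyperbolic splitting: exploit the dominated splitting and Liao-type hyperbolic times to shadow long ergodic blocks by periodic orbits, then thicken to a horseshoe using Pugh's closing/connecting lemmas; ergodic measures close to a Dirac at a singularity are handled by horseshoes accumulating on that singularity, whose existence is exactly the non-triviality hypothesis combined with the singular hyperbolic structure. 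Combining with entropy-density on each horseshoe yields the desired sequence.

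The main obstacle is, unsurprisingly, the singular set. It prevents $\Lambda$ from being uniformly hyperbolic, so no global Anosov-style shadowing is available and one is forced to approximate by genuinely hyperbolic pieces whose dynamics near the singularity must be controlled (orbits are slowed down there and entropy contributions can concentrate). This is precisely the point where the $C^1$-genericity in (2) is unavoidable, since it is needed both to invoke a generic closing/connecting lemma and to upgrade approximating periodic orbits to horseshoes with prescribed entropy. In contrast, the $C^r$ ($r\geq 2$) geometric Lorenz case bypasses these difficulties because the one-dimensional quotient map is explicit and uniformly expanding, so the genericity $\mathcal{A}^r$ serves only the milder role of excluding degenerate Lorenz combinatorics.
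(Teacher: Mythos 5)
The paper does not prove Lemma~\ref{l5.2} at all: it is imported wholesale as \cite[Corollary~4.14]{STVW2021} and used as a black box, so there is no internal argument to compare your proposal against. What the paper \emph{does} prove, immediately after this lemma, is an upgrade from entropy-density to \emph{refined} entropy-density on these attractors, and that argument uses exactly the ingredients you identify: approximation of ergodic measures in $\cM_0(\Lambda)$ by measures carried on horseshoes inside $\Lambda$ (Lemma~\ref{l5.3}, also from \cite{STVW2021}), refined entropy-density on horseshoes coming from the suspension-flow structure over a subshift of finite type (Lemma~\ref{l5.1} plus Remark~\ref{r3.1}), and the Crovisier--Yang $C^1$-genericity result guaranteeing that a non-trivial singular hyperbolic attractor is a homoclinic class all of whose periodic orbits are homoclinically related. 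So your broad strategy --- reduce to horseshoe approximation, then handle non-ergodic measures via the ergodic decomposition and gluing --- is consistent in spirit with how both \cite{STVW2021} and this paper organize the surrounding material.

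Two concrete reservations. First, your characterization of what $\mathcal{A}^r$ buys in the Lorenz case, namely that it ``enters only to rule out degenerate combinatorial configurations of the kneading sequence,'' is almost certainly not the role of the residual set in \cite{STVW2021}; the remark following Lemma~\ref{l5.3} in the present paper indicates that in both cases the genericity is used to guarantee that the attractor is a homoclinic class in which any two periodic orbits are homoclinically related, and the hypothesis $r\geq 2$ in the geometric Lorenz model is typically needed so that the stable foliation of the return map is $C^{1+\alpha}$, making the one-dimensional quotient map well defined with bounded distortion --- a regularity issue, not a kneading issue. Second, the step where you assert that a finite convex combination of ergodic measures supported on different horseshoes ``lies in the weak-$\ast$ closure of a single ergodic measure on a larger horseshoe that contains all of them'' is not automatic: producing a common larger horseshoe requires the individual horseshoes (equivalently, their periodic orbits) to be homoclinically related, and that is precisely the generic hypothesis supplied by the residual set. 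As written, your proposal treats this gluing as free, but it is in fact the crucial place where the genericity of $\mathcal{A}$ (resp.\ $\mathcal{A}^r$) is consumed.
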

\begin{lemma}\cite[Theorem A]{A-P-P-V 2009}
Suppose that $\Lambda$ is a singular hyperbolic attractor of $X\in\mathscr{X}^r(M^3)$. Then $\Lambda$ is expansive.	
\end{lemma}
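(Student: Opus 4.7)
The plan is to follow the strategy of Araujo--Pacifico--Pujals--Viana, which extends Komuro's expansiveness argument for the geometric Lorenz attractor to the general singular hyperbolic setting. The appropriate notion of expansiveness here is the $K^\ast$-expansiveness of Komuro: there exist $\delta, \varepsilon > 0$ such that if $x,y\in\Lambda$ and $h\colon\mathbb{R}\to\mathbb{R}$ is an increasing homeomorphism with $h(0)=0$ satisfying $d(\phi_t(x),\phi_{h(t)}(y))<\varepsilon$ for all $t\in\mathbb{R}$, then $\phi_{h(t_0)}(y)\in\phi_{[t_0-\delta,t_0+\delta]}(x)$ for some $t_0\in\mathbb{R}$. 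The point is to rule out non-trivial reparameterized shadowing.

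The first step would be to exploit the singular hyperbolic splitting $T_\Lambda M = E^{s}\oplus E^{cu}$: the strong stable bundle $E^{s}$ is uniformly contracted, while $E^{cu}$ is a two-dimensional sectionally expanding bundle containing the flow direction. Using invariant strong stable manifolds $W^{ss}(x)$, I would construct a finite family $\Xi$ of adapted $C^1$ cross-sections transverse to the flow, whose interiors foliate a neighbourhood of $\Lambda$ minus the singularities. On each cross-section $\Sigma\in\Xi$, the stable foliation induces a one-dimensional quotient, and the Poincaré return map $R\colon\Xi\to\Xi$ projects to a piecewise expanding one-dimensional map $\hat R$.

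Next I would translate the Komuro-shadowing hypothesis to the cross-sections: if $x,y$ shadow each other in the $K^\ast$-sense and neither orbit accumulates on the singular set, then their successive returns $x_n=R^n(x)$, $y_n=R^n(y)$ remain $O(\varepsilon)$-close in $\Sigma$, and contraction along $E^{s}$ forces them to lie in the same stable leaf. Then one-dimensional expansion of $\hat R$ forces the projections $\hat x_n, \hat y_n$ to coincide, which combined with the stable-leaf containment gives $y\in W^{ss}(\phi_s(x))$ for some small $s$; uniform contraction on $W^{ss}$ backwards in time then forces $y=\phi_s(x)$, establishing $K^\ast$-expansiveness on this set of orbits.

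The main obstacle, and the heart of the APPV proof, is to treat orbits whose $\omega$- or $\alpha$-limit hits a Lorenz-like singularity $\sigma$. Here cross-section techniques degenerate because return times blow up and sectionally expanding behaviour is not uniform near $\sigma$. I would handle this by using a local linearization (or adapted Hartman-type coordinates) in a neighbourhood $U_\sigma$, exploiting the eigenvalue inequalities in the Lorenz-like condition to show that two orbits entering $U_\sigma$ close together either exit along close one-dimensional unstable branches (reducing again to the cross-section analysis outside $U_\sigma$) or are forced to coincide with $\sigma$'s invariant manifolds. Packaging these two regimes produces the uniform constants $\delta,\varepsilon$ and yields expansiveness of $\Lambda$. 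The technical difficulty is bookkeeping the interplay between arbitrarily long sojourns near singularities and the sectional expansion outside, which is exactly where the refinement of hyperbolic methods for $E^{cu}$ being only sectionally (not uniformly) expanding enters.
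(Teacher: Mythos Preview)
The paper does not prove this lemma at all: it is stated as a citation of \cite[Theorem A]{A-P-P-V 2009} and used as a black box to conclude that the entropy function on a geometric Lorenz attractor is upper semicontinuous. There is therefore nothing in the paper's own argument to compare your proposal against.

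That said, your sketch is a faithful outline of the Araujo--Pacifico--Pujals--Viana strategy from the cited reference: Komuro's $K^\ast$-expansiveness as the target notion, adapted cross-sections transverse to the flow with the induced stable foliation, reduction to a piecewise expanding one-dimensional quotient map, and a separate local analysis near Lorenz-like singularities using the eigenvalue conditions. For the purposes of this paper, however, none of that is needed---the result is simply quoted, and in fact the paper immediately notes that the upper semicontinuity conclusion can alternatively be obtained from Lemma~\ref{l5.4} (entropy expansiveness of sectional hyperbolic sets, due to Pacifico--Yang--Yang), bypassing expansiveness entirely.
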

Hence, if $\Lambda$ is a geometric Lorenz attractor of $X\in\mathscr{X}^r(M^3)$, then the entropy function of it is upper semicontinuous. This result also can be deduced from the following lemma.
\begin{lemma}\label{l5.4}\cite[Theorem A]{PYY2021}
	Suppose that $\Lambda$ is a compact invariant set that is sectional hyperbolic for a $C^1$ flow $\phi_t$, with all the singularities in $\Lambda$ hyperbolic. Then $\phi_t$ is robustly $\delta$-entropy expansive near $\Lambda$. 
\end{lemma}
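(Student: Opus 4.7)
The plan is to exploit the sectional hyperbolic structure on $\Lambda$ together with the hyperbolicity of the singularities to control the size of Bowen balls uniformly under $C^1$ perturbations of $X$. Since sectional hyperbolicity (with hyperbolic singularities) is a $C^1$-open property, I would first fix a $C^1$-neighborhood $\mathcal{U}$ of $X$ and an isolating neighborhood $U$ of $\Lambda$ such that for every $Y \in \mathcal{U}$ the maximal invariant set $\Lambda_Y \subset U$ is sectional hyperbolic for the flow of $Y$ with uniform splitting constants, and every singularity in $\Lambda_Y$ is still hyperbolic with uniformly bounded linearization data. This robustness is what will allow a single $\delta>0$ to serve for every $Y \in \mathcal{U}$.

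Next, I would bound the topological entropy of the reparametrized Bowen set consisting of points whose $Y$-orbit $\delta$-shadows that of $x$ after some continuous time reparametrization, uniformly in $Y \in \mathcal{U}$ and $x \in \Lambda_Y$. Away from a small neighborhood of the singularities, the dominated splitting $E^s \oplus F$ with uniform contraction on $E^s$ and uniform sectional expansion on $F$ forces this Bowen set to collapse into a thin tube around the local flow orbit through $x$; standard arguments for partially hyperbolic dynamics (a local product structure with zero entropy along the $E^s$-fibers and negligible spread transverse to the flow inside $F$) then yield vanishing topological entropy on this portion.

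The main obstacle, where genuine care is required, is the analysis near the singularities. For each hyperbolic singularity $\sigma$ I would invoke a uniform $C^0$-linearization, available on a fixed neighborhood of $\sigma$ for all $Y$ sufficiently close to $X$, to reduce the local picture to that of a linear hyperbolic flow, where Bowen balls are explicit and visibly carry zero entropy. The delicate part is handling orbits that alternate many times between the regular part and the neighborhoods of singularities; here I would partition the time axis into near-singularity and away-from-singularity epochs, bound the entropy contribution of each separately using the two preceding estimates, and assemble them using that there are only finitely many singularities and the transition times between the two regimes are uniformly bounded below (by choice of the cut-off neighborhoods) and not too long on average (by sectional expansion pushing orbits out of the singular region). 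Taking $\delta$ small enough that all these estimates are uniform over $Y \in \mathcal{U}$ then yields the required robust $\delta$-entropy expansiveness near $\Lambda$.
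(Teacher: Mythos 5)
This lemma is not proved in the paper at all: it is cited verbatim from Pacifico--Yang--Yang \cite{PYY2021} as an external result, so there is no ``paper's own proof'' against which to check your attempt. That said, your sketch is worth assessing on its own terms, since it is trying to reconstruct the missing argument.

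Your high-level strategy --- fix a $C^1$-neighborhood of $X$ and an isolating block on which sectional hyperbolicity and hyperbolicity of singularities persist with uniform constants, split a long orbit segment into near-singularity and away-from-singularity epochs, and bound the entropy contribution of each --- is the right shape and does resemble the actual route in \cite{PYY2021}. But as written, two of the three ingredients carry real gaps. Away from the singularities, you invoke ``standard arguments for partially hyperbolic dynamics'' to say the reparametrized Bowen set collapses to a thin tube of zero entropy, but sectional hyperbolicity only gives $E^s\oplus F$ with $F$ sectionally expanding and containing the flow direction; $F$ is \emph{not} uniformly expanding, and the reparametrization freedom can slide points along the orbit for arbitrarily long times, so the set is genuinely one-dimensional in a non-trivial way. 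The fact that this set carries zero entropy is precisely the content that needs proving, not a known off-the-shelf fact. Near the singularities, the Hartman--Grobman conjugacy is only $C^0$, which is not enough by itself to transport entropy estimates; and because orbits can spend arbitrarily long, unbounded times in any fixed small neighborhood of a singularity (this is the defining feature of a singular-hyperbolic set), your claim that transition times are ``not too long on average'' is exactly the point of difficulty and cannot be assumed. The actual proof handles this by a careful analysis of the expansion rates inside the sectional bundle and a quantitative study of the passage times, producing a uniform bound on the cardinality of $(n,\varepsilon)$-separated subsets of each infinite Bowen ball; that quantitative step is missing from your outline. So the proposal is a plausible table of contents but not yet an argument.
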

\begin{remark}
	Non-trivial singular hyperbolic attractors satisfy the assumption in this lemma. Hence, the entropy functions of non-trivial singular hyperbolic attractors are upper semicontinuous. See \cite{PYY2021} for more details.
\end{remark}
By Theorem \ref{th5.2}, to prove Theorem \ref{mt-3} and Theorem \ref{mt-4}, it is enough to show that $C^r(r\geq2)$-generic geometric Lorenz attractors and $C^1$-generic singular hyperbolic attractors have refined entropy-dense property.

Let's recall the definition of \emph{horseshoe}.
\begin{definition}
	Suppose that $\varphi_t:M\to M$ is a $C^1$ flow generated by $X\in\mathscr{X}^1(M)$. A compact $\varphi_t$-invariant set $\Lambda\subset X$ is called a horseshoe of $\varphi_t$, if there exists a suspension flow $\phi_t:(\Sigma_k)_\rho\to(\Sigma_k)_\rho$ with a continuous roof function $\rho$ and a homeomorphism $\Pi:(\Sigma_k)_\rho\to\Lambda$, such that $$\varphi_t\circ\Pi=\Pi\circ\phi_t.$$
\end{definition}
By Lemma \ref{l5.1}, horseshoes have refined entropy-dense property.

Given a compact and invariant set $\Lambda$ of $X\in\mathscr{X}^1(M)$, denote $\cM_1(\Lambda):=\{\mu\in\cM_{inv}(\Lambda)\mid\mu(\text{Sing}(\Lambda))=0\}$ and $\cM_0(\Lambda)=\cM_{erg}(\Lambda)\cap\cM_1(\Lambda)$, where $\text{Sing}(\Lambda)$ is the set of singularities of $\Lambda$.
\begin{lemma}\cite[Lemma 4.10 and Remark 4.11]{STVW2021}\label{l5.3}
	Suppose that $X\in\mathscr{X}^1(M)$ and $\Lambda$ is a singular hyperbolic homoclinic class of $X$. Assume each pair of periodic orbits of $\Lambda$ are homoclinic related and $\mu\in\cM_0(\Lambda)$. Then for any $\varepsilon>0$ there exists a horseshoe $\Lambda_\varepsilon\subset\Lambda$ such that $\varrho(\mu,\nu)<\varepsilon$ for any $\nu\in\cM_{inv}(\Lambda_\varepsilon)$ and there exists $\nu_0\in\cM_{erg}(\Lambda_\varepsilon)$ such that $h_{\nu_0}(X)>h_\mu(X)-\varepsilon$.
\end{lemma}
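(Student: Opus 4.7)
The plan is to execute a singular-hyperbolic version of Katok's horseshoe approximation theorem. Because $\mu(\operatorname{Sing}(\Lambda))=0$ and the singular hyperbolic splitting $E^s\oplus E^{cu}$ on $\Lambda$ restricts to a uniformly (partially) hyperbolic structure on compact subsets disjoint from the singularities, Oseledets' theorem applied to $\mu$ yields a well-defined set of Lyapunov exponents that, after quotienting out the flow direction, are bounded away from zero on a full-measure set. Hence $\mu$ is a hyperbolic ergodic measure in the Pesin sense for $X$, and one may work inside Pesin charts disjoint from $\operatorname{Sing}(\Lambda)$.

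First I would fix a compact Pesin block $\Gamma\subset\Lambda\setminus\operatorname{Sing}(\Lambda)$ of positive $\mu$-measure on which local stable and unstable manifolds have uniform sizes and the hyperbolicity constants are uniform. Using Birkhoff's ergodic theorem I would further shrink $\Gamma$ so that (i) a fixed finite family of continuous test functions metrizing $\varrho$ on a neighborhood of $\mu$ has Birkhoff averages uniformly $\varepsilon/2$-close to the corresponding $\mu$-integrals along long orbit segments anchored in $\Gamma$, and (ii) $\mu$-typical points return to $\Gamma$ with a definite positive frequency.

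Next I would apply Liao's shadowing lemma together with a Pliss-type selection to long orbit segments of a $\mu$-generic point that begin and end in $\Gamma$, producing a genuine hyperbolic periodic orbit $\gamma\subset\Lambda\setminus\operatorname{Sing}(\Lambda)$ whose invariant measure $\delta_\gamma$ is $\varepsilon$-close to $\mu$ in $\varrho$. The hypothesis that every pair of periodic orbits in $\Lambda$ is homoclinically related then provides transverse homoclinic intersections of $W^s(\gamma)$ and $W^u(\gamma)$, and the Smale--Birkhoff theorem converts a suitable collection of these into a basic set (horseshoe) $\Lambda_\varepsilon\supset\gamma$. Keeping the homoclinic legs confined to a small neighborhood of $\gamma$ ensures that any $\nu\in\cM_{inv}(\Lambda_\varepsilon)$ is $\varepsilon$-close to $\delta_\gamma$, and hence $\varepsilon$-close to $\mu$ after absorbing constants. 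A standard counting/variational-principle argument on the symbolic coding of $\Lambda_\varepsilon$, combined with the Pesin entropy formula applied to $\mu$, yields an ergodic $\nu_0\in\cM_{erg}(\Lambda_\varepsilon)$ with $h_{\nu_0}(X)>h_\mu(X)-\varepsilon$.

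The main obstacle is handling the singularities: classical Pesin theory and Katok's construction fail near $\operatorname{Sing}(\Lambda)$, so the crux is to argue that, because $\mu(\operatorname{Sing}(\Lambda))=0$ and the singularities in the singular hyperbolic setting are Lorenz-like, the generic orbit segments used for shadowing, the closing periodic orbit $\gamma$, and the homoclinic splicing can all be chosen to stay a definite distance away from $\operatorname{Sing}(\Lambda)$. This is where the sectional hyperbolic splitting combined with the homoclinic-relation hypothesis does the heavy technical lifting: it yields uniform closing and connecting estimates in a compact region of $\Lambda\setminus\operatorname{Sing}(\Lambda)$ and guarantees that the resulting horseshoe $\Lambda_\varepsilon$ lies entirely in a zone of uniform hyperbolicity, allowing the classical horseshoe machinery to apply verbatim.
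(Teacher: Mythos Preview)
The paper does not prove this lemma; it is quoted verbatim from \cite[Lemma 4.10 and Remark 4.11]{STVW2021} and used as a black box. So there is no ``paper's own proof'' to compare against, and your sketch is really a proposal for how the cited result might be established.

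Your overall architecture---a Katok-type horseshoe approximation adapted to the singular hyperbolic setting, exploiting $\mu(\operatorname{Sing}(\Lambda))=0$ to work on a compact region away from singularities---is the right one, and is indeed what \cite{STVW2021} does. Two points, however, need attention.

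First, the regularity. You are working with $X\in\mathscr{X}^1(M)$, and you repeatedly invoke Pesin charts, Pesin blocks with local stable/unstable manifolds of uniform size, and the Pesin entropy formula. Classical Pesin theory requires $C^{1+\alpha}$; in the $C^1$ category these tools are not available as stated. What replaces them is the \emph{uniform} structure coming from the singular hyperbolic (dominated) splitting $E^s\oplus E^{cu}$: one uses Liao's theory of quasi-hyperbolic strings and the associated $C^1$ shadowing/closing lemmas directly, without Pesin charts. You do mention Liao's shadowing lemma, but it should be the backbone of the construction, not an auxiliary to a Pesin-theoretic framework.

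Second, and more seriously, your route to the entropy bound has a gap. You first close a single generic orbit to a periodic orbit $\gamma$ with $\delta_\gamma$ close to $\mu$, and only afterwards build $\Lambda_\varepsilon$ from homoclinic legs near $\gamma$. This guarantees the measure-closeness conclusion, but the entropy of a horseshoe manufactured from the homoclinic tangle of a \emph{single} periodic orbit is governed by the number and return times of the chosen legs, and there is no a priori reason this approaches $h_\mu(X)$. Invoking ``the Pesin entropy formula applied to $\mu$'' does not bridge this: even if $\sum\chi_i^+(\gamma)\approx h_\mu(X)$, you would still need to exhibit enough transverse homoclinic intersections in a small neighborhood, which is an additional nontrivial argument. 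The standard (and the \cite{STVW2021}) approach instead starts from an $(n,\delta)$-separated set of cardinality $\approx e^{n h_\mu}$ inside a hyperbolic block with good Birkhoff averages, shadows each of these long segments by a periodic orbit, and then uses the homoclinic-relation hypothesis to splice all of them into a single horseshoe. In that construction the entropy lower bound and the measure-closeness come out simultaneously from the same collection of orbit segments.
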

\begin{remark}
	In \cite{Crovisier-Yang 2021}, S. Crovisier and D. Yang proved that for $C^1$-open and dense set of vector field $X\in\mathscr{X}^1(M)$, any singular hyperbolic attractor $\Lambda$ is a robustly transitive attractor \cite[Theorem A]{Crovisier-Yang 2021}. Moreover, if $\Lambda$ is non-trivial, then it is a homoclinic class and any two periodic orbits contained in $\Lambda$ are homoclinically related \cite[Theorem B]{Crovisier-Yang 2021}. Therefore, combine with Lemma \ref{l5.2}, we have
	\begin{enumerate}[(1)]
		\item There exists a residual subset $\mathcal{A}^r\subset\mathscr{X}^r(M^3)$ where $r\geq2$ such that if $\Lambda$ is a geometric Lorenz attractor of $X\in\mathcal{A}^r$, then $\Lambda$ has entropy-dense property and satisfies the assumptions in Lemma \ref{l5.3};
		\item There exists a residual subset $\mathcal{A}\subset\mathscr{X}^1(M)$ such that if $\Lambda$ is a non-trivial singular hyperbolic attractor of $X\in\mathcal{A}$, then $\Lambda$ has entropy-dense property and satisfies the assumptions in Lemma \ref{l5.3}.	
	\end{enumerate} 
\end{remark}
Hence, to prove Theorem \ref{mt-3} and Theorem \ref{mt-4}, it's enough to prove the following lemma.
\begin{lemma}
	Suppose that $X\in\mathscr{X}^1(M)$ and $\Lambda$ is a singular hyperbolic homoclinic class of $X$ with entropy-dense property. Assume each pair of periodic orbits of $\Lambda$ are homoclinic related and the entropy function of $\Lambda$ is upper semicontinuous. Then $\Lambda$ has refined entropy-dense property.
\end{lemma}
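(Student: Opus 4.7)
The plan is to combine the entropy-dense property (already assumed) with the horseshoe approximation provided by Lemma \ref{l5.3} and the refined entropy-dense property of suspension flows over subshifts of finite type given by Lemma \ref{l5.1}.

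Fix $\mu\in\cM_{inv}(\Lambda)$ and $0\le c\le h_\mu(X)$. I would first dispose of the case $h_\mu(X)=0$: then necessarily $c=0$, and by the entropy-dense property of $\Lambda$ there exist ergodic $\nu_n\to\mu$ with $h_{\nu_n}(X)\ge h_\mu(X)-1/n=-1/n$; upper semicontinuity of the entropy function together with $h_{\nu_n}(X)\ge 0$ then forces $h_{\nu_n}(X)\to 0=c$, which settles this case.

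The main case is $h_\mu(X)>0$. First I would invoke entropy-density on $\Lambda$ to produce ergodic measures $\mu_n\to\mu$ with $h_{\mu_n}(X)>h_\mu(X)-\tfrac1n$. For all sufficiently large $n$ one has $h_{\mu_n}(X)>0$, so $\mu_n$ cannot be a Dirac mass at a singularity and therefore $\mu_n\in\cM_0(\Lambda)$. Applying Lemma \ref{l5.3} to each $\mu_n$ with $\varepsilon=1/n$ then produces a horseshoe $\Lambda_n\subset\Lambda$ such that
\begin{equation*}
\varrho(\mu_n,\nu)<1/n \text{ for every } \nu\in\cM_{inv}(\Lambda_n), \qquad \exists\, \nu_n^0\in\cM_{erg}(\Lambda_n) \text{ with } h_{\nu_n^0}(X)>h_{\mu_n}(X)-1/n.
\end{equation*}
In particular $\nu_n^0\to\mu$ weak$^\ast$ and $h_{\nu_n^0}(X)\to h_\mu(X)\ge c$.

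Next I would use that each horseshoe $\Lambda_n$ is, by definition, conjugate to a suspension flow over a transitive subshift of finite type with continuous roof function. Transitive subshifts of finite type satisfy the refined entropy-dense property (Remark \ref{r3.1} together with upper semicontinuity of the entropy function), and Lemma \ref{l5.1} transfers this to the suspension, so $\Lambda_n$ itself has the refined entropy-dense property. I would set $c_n:=\min\{c,h_{\nu_n^0}(X)\}$, which satisfies $c_n\to c$ and $0\le c_n\le h_{\nu_n^0}(X)$, and apply refined entropy-density of $\Lambda_n$ to the pair $(\nu_n^0,c_n)$ to obtain $\tau_n\in\cM_{erg}(\Lambda_n)$ with $\varrho(\tau_n,\nu_n^0)<1/n$ and $|h_{\tau_n}(X)-c_n|<1/n$. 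Because an ergodic measure of a subsystem is ergodic for the ambient flow, $\tau_n\in\cM_{erg}(\Lambda)$; the triangle inequality $\varrho(\tau_n,\mu)\le\varrho(\tau_n,\nu_n^0)+\varrho(\nu_n^0,\mu_n)+\varrho(\mu_n,\mu)<3/n+\varrho(\mu_n,\mu)$ yields $\tau_n\to\mu$, and $h_{\tau_n}(X)\to c$ by construction, which is exactly the refined entropy-dense property.

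The main obstacle is making sure Lemma \ref{l5.3} is applicable at every step, i.e.\ that one can approximate $\mu$ by ergodic measures lying in $\cM_0(\Lambda)$ rather than in $\cM_{erg}(\Lambda)\setminus\cM_0(\Lambda)$. This is handled by the positive-entropy reduction above, which rules out ergodic Dirac masses at singularities; it also explains why the degenerate case $h_\mu(X)=0$ has to be treated separately using upper semicontinuity rather than through horseshoes.
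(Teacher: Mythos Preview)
Your argument is correct and follows essentially the same route as the paper's proof: reduce to ergodic measures of positive entropy via entropy-density, invoke Lemma \ref{l5.3} to land inside a horseshoe, and then use the refined entropy-dense property of horseshoes (via Lemma \ref{l5.1}) to adjust the entropy to the target value. The only organisational difference is that you treat the cases $c<h_\mu(X)$ and $c=h_\mu(X)$ uniformly through the choice $c_n=\min\{c,h_{\nu_n^0}(X)\}$ (with $c_n\to c$ following from $h_{\nu_n^0}(X)>h_\mu(X)-2/n\ge c-2/n$), whereas the paper handles $c=h_\mu(X)$ separately by approximating from below with $c_n=\tfrac{n}{n+1}h_\mu(X)$.
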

\begin{proof}
	When $h_{top}(\Lambda)=0$, since $\cM_{inv}(\Lambda)$ is entropy dense, $\Lambda$ has refined entropy-dense property. Now suppose that $h_{top}(\Lambda)>0$. Suppose that $\mu\in\cM_{inv}(\Lambda)$ and $0\leq c\leq h_{\mu}(\Lambda)=h_{\mu}(X)$.
	
	When $h_\mu(X)=0$, it is directly from that $\Lambda$ has entropy-dense property and the entropy function of $\Lambda$ is upper semicontinuous. Now, suppose that $h_\mu(X)>0$.
	
	When $0\leq c<h_\mu(X)$, choose $\varepsilon>0$ such that $h_\mu(X)-2\varepsilon>c$. Since $\cM_{inv}(\Lambda)$ is entropy dense, there exists $\mu_\varepsilon\in\cM_{erg}(\Lambda)$ such that $\varrho(\mu_\varepsilon,\mu)<\varepsilon$ and $h_{\mu_\varepsilon}(X)>h_\mu(X)-\varepsilon>0$. Hence, $\mu_\varepsilon\in\cM_0(\Lambda)$. By Lemma \ref{l5.3}, there exists a horseshoe $\Lambda_\varepsilon\subset\Lambda$ such that $\varrho(\mu_\varepsilon,\nu)<\varepsilon$ for any $\nu\in\cM_{inv}(\Lambda_\varepsilon)$ and there exists $\nu_0\in\cM_{erg}(\Lambda_\varepsilon)$ such that $h_{\nu_0}(X)>h_{\mu_\varepsilon}(X)-\varepsilon>h_\mu(X)-2\varepsilon>c$. Since horseshoes have refined entropy-dense property, there exists $\nu_\varepsilon\in\cM_{erg}(\Lambda_\varepsilon)$ such that $\varrho(\nu_\varepsilon,\nu_0)<\varepsilon$ and $|h_{\nu_\varepsilon}(X)-c|<\varepsilon$. Hence, $\nu_\varepsilon\in\cM_{erg}(\Lambda)$, $\varrho(\nu_\varepsilon,\mu)<3\varepsilon$ and $|h_{\nu_\varepsilon}(X)-c|<\varepsilon$.
	
    When $c=h_\mu(X)$, consider $c_n:=\frac{n}{n+1}h_\mu(X)$, then $0\leq c_n<h_\mu(X)$ and $\lim\limits_{n\to+\infty}c_n=c$.
\end{proof}
\section{Some notes}
By \cite[Proposition 3.9]{LO2018}, if a non-trivial dynamical system $(X,T)$ is transitive and has the shadowing property, then the set of strongly mixing measures is of first category in $\cM(X,T)$. The question is
\begin{Que}
	Suppose that $(X,T)$ is a non-trivial dynamical system such that $(X,T)$ is transitive, has the shadowing property and the entropy function is upper semicontinuous. Given $0<c<h_{top}(T)$ then the set of strongly mixing measures is of first category in $\Lambda_c$ ?
\end{Que}
Here, $\Lambda_c:=\{\mu\in\cM(X,T)\mid h_\mu(T)\geq c\}$.

In this article, we mainly discuss the ergodic optimization for continuous functions. Naturally, we have the question:
\begin{Que}
Suppose that $(X,T)$ is transitive, has the shadowing property and entropy function is upper semicontinuous. Then for $0\leq c<h_{top}(T)$, there is a residual subset $\mathcal{R}$ of $C^\theta(X)$, such that for any $f\in\mathcal{R}$, there is a unique $\mu_f^c\in\cM(X,T)$ such that
\begin{enumerate}[(1)]
	\item $\int f\mathrm{d}{\mu_f^c}=\sup\{\int f\mathrm{d\mu}\mid h_\mu(T)\geq c\}$;
	\item $\mu_f^c\in\cM^e(X,T)$;
	\item $h_{\mu_f^c}(T)=c$? 	
\end{enumerate}
\end{Que}
Here, $C^\theta(X)$ $(\theta>0)$ is the space of $\theta$-H\"oder functions.
\section{Appendix A}\label{appendix-A}
In this appendix, we will prove Theorem \ref{the1.1} and \ref{the1.2}. 
\subsection{Proof of Theorem \ref{the1.1}} We will use the results of Theorem \ref{the1.2} to simplify the proof.
\begin{proof}[Proof of Theorem \ref{the1.1}]
	When $\#\Lambda=1$, $\mathcal{U}^\Lambda_\varSigma=\Lambda$. Now, suppose that $\#\Lambda>1$ and $\Lambda=\{\mu_1,\mu_2,\cdots,\mu_k\}$. Given $f\in\mathcal{U}^\Lambda_\varSigma$, suppose that $\cM_{max}^\Lambda(f)=\{\mu_l\}$. Then, $\int f\mathrm{d}\mu_i<\int f\mathrm{d}\mu_l$ for any $i\neq l$. Since $h\mapsto\int h\mathrm{d\mu}$ is a continuous function on $\varSigma$ for any $\mu\in\Lambda$, for any $i\neq l$, there is an open subset $U_i$ of $\varSigma$ such that $f\in U_i$ and for any $g\in U_i$, $\int g\mathrm{d}\mu_i<\int g\mathrm{d}\mu_l$. Let $U=\bigcap\limits_{i\neq l}U_i$, then $f\in U\subset\mathcal{U}^\Lambda_\varSigma$. Hence, $\mathcal{U}^\Lambda_\varSigma$ is open in $\varSigma$. By Theorem \ref{the1.2}, $\mathcal{U}^\Lambda_\varSigma$ is open and dense in $\varSigma$.  
\end{proof}
\subsection{Proof of Theorem \ref{the1.2}} Suppose that $A$ and $B$ are two non-empty compact subsets of $\mathbb{R}$, the \emph{Hausdorff metric} is defined by $d_H(A,B):=\max\limits_{a\in A}\min\limits_{b\in B}|a-b|+\max\limits_{b\in B}\min\limits_{a\in A}|a-b|$. Given $f,g\in C(X)$, define $\beta^\Lambda(g\mid f):=\sup\limits_{\mu\in\cM_{max}^\Lambda(f)}\int g\mathrm{d\mu}$ and $\cM_{max}^\Lambda(g\mid f):=\left\{\mu\in\cM_{max}^\Lambda(f)\mid\int g\mathrm{d\mu}=\beta^\Lambda(g\mid f)\right\}$.
\begin{lemma}\label{lem5.1}
	Given $f,g\in C(X)$, we have $$\left\{\int g\mathrm{d\mu}\mid\mu\in\cM^\Lambda_{max}(f+\varepsilon g)\right\}\longrightarrow\left\{\beta^\Lambda(g\mid f)\right\} \text{  as } \varepsilon\searrow 0,$$ in the sense of the Hausdorff metric.
\end{lemma}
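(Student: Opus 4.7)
The plan is to show that every value $\int g\,\mathrm{d}\mu_\varepsilon$ with $\mu_\varepsilon\in\cM^\Lambda_{max}(f+\varepsilon g)$ is sandwiched between $\beta^\Lambda(g\mid f)$ and a quantity that tends to $\beta^\Lambda(g\mid f)$ as $\varepsilon\searrow 0$. Since $\cM^\Lambda_{max}(f+\varepsilon g)$ is nonempty, the set on the left is nonempty, and the Hausdorff convergence to a singleton reduces to showing that $\sup\{|\int g\,\mathrm{d}\mu-\beta^\Lambda(g\mid f)|\mid \mu\in\cM^\Lambda_{max}(f+\varepsilon g)\}\to 0$.

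For the lower bound, fix any $\nu\in\cM^\Lambda_{max}(f)$ and any $\mu_\varepsilon\in\cM^\Lambda_{max}(f+\varepsilon g)$ with $\varepsilon>0$. By the maximizing property applied to $f+\varepsilon g$, one has $\int f\,\mathrm{d}\mu_\varepsilon+\varepsilon\int g\,\mathrm{d}\mu_\varepsilon\geq\int f\,\mathrm{d}\nu+\varepsilon\int g\,\mathrm{d}\nu$. Since $\nu\in\cM^\Lambda_{max}(f)$, we have $\int f\,\mathrm{d}\nu\geq\int f\,\mathrm{d}\mu_\varepsilon$, so after cancelling and dividing by $\varepsilon>0$ I obtain $\int g\,\mathrm{d}\mu_\varepsilon\geq\int g\,\mathrm{d}\nu$. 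Taking the supremum over $\nu\in\cM^\Lambda_{max}(f)$ yields $\int g\,\mathrm{d}\mu_\varepsilon\geq\beta^\Lambda(g\mid f)$.

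For the asymptotic upper bound I argue by contradiction using the compactness of $\Lambda$. Suppose the conclusion fails; then there exist $\delta>0$, a sequence $\varepsilon_n\searrow 0$ and $\mu_n\in\cM^\Lambda_{max}(f+\varepsilon_n g)$ with $\int g\,\mathrm{d}\mu_n\geq\beta^\Lambda(g\mid f)+\delta$. By compactness of $\Lambda$, pass to a weak$^\ast$-convergent subsequence $\mu_{n_k}\to\mu\in\Lambda$. Since $\|(f+\varepsilon_{n_k} g)-f\|\to 0$, Lemma \ref{lem3.1} applies and gives $\mu\in\cM^\Lambda_{max}(f)$. Weak$^\ast$ convergence then yields $\int g\,\mathrm{d}\mu=\lim_{k\to\infty}\int g\,\mathrm{d}\mu_{n_k}\geq\beta^\Lambda(g\mid f)+\delta$, contradicting the definition of $\beta^\Lambda(g\mid f)$ as a supremum over $\cM^\Lambda_{max}(f)$.

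Combining the two estimates, for every $\delta>0$ there is $\varepsilon_0>0$ such that for all $0<\varepsilon<\varepsilon_0$ the set $B_\varepsilon:=\{\int g\,\mathrm{d}\mu\mid\mu\in\cM^\Lambda_{max}(f+\varepsilon g)\}$ lies inside $[\beta^\Lambda(g\mid f),\beta^\Lambda(g\mid f)+\delta]$; hence $d_H(B_\varepsilon,\{\beta^\Lambda(g\mid f)\})\leq 2\delta$, proving the claimed convergence. The only conceptual step is the contradiction argument for the upper bound, and this is essentially handed to me by Lemma \ref{lem3.1}; the rest is an elementary manipulation of the maximizing inequality.
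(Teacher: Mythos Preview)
Your proof is correct and uses the same two ingredients as the paper's proof: the maximizing inequality $\int(f+\varepsilon g)\,d\mu_\varepsilon\geq\int(f+\varepsilon g)\,d\nu$ together with $\int f\,d\nu\geq\int f\,d\mu_\varepsilon$ for $\nu\in\cM^\Lambda_{max}(f)$, and Lemma~\ref{lem3.1} to identify limit points as elements of $\cM^\Lambda_{max}(f)$. The only organizational difference is that the paper shows directly that every weak$^\ast$ limit point of $\mu_\varepsilon$ lies in $\cM^\Lambda_{max}(g\mid f)$, whereas you split the argument into a uniform lower bound $\int g\,d\mu_\varepsilon\geq\beta^\Lambda(g\mid f)$ (valid for every $\varepsilon>0$) and a contradiction for the upper bound; this is a harmless repackaging, and your lower bound is in fact slightly more informative.
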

\begin{proof}
	Given $f,g\in C(X)$. Since $\cM^\Lambda_{max}(f+\varepsilon g)$ is a nonempty and compact subset of $\cM(X,T)$, the set $\left\{\int g\mathrm{d\mu}\mid\mu\in\cM^\Lambda_{max}(f+\varepsilon g)\right\}$ is a nonempty compact subset of $\mathbb{R}$. To prove the lemma, it is enough to show that if $a_\varepsilon\in \left\{\int g\mathrm{d\mu}\mid\mu\in\cM^\Lambda_{max}(f+\varepsilon g)\right\}$, then $\lim\limits_{\varepsilon \searrow 0}a_\varepsilon=\beta^\Lambda(g\mid f)$. Writing $a_\varepsilon=\int f\mathrm{d}m_\varepsilon$ for some $m_\varepsilon\in \cM^\Lambda_{max}(f+\varepsilon g)$, it is in turn enough to prove that any limit point of $m_\varepsilon$, as $\varepsilon \searrow 0$, belongs to $\cM^\Lambda_{max}(g\mid f)$. 
	Suppose that $\lim\limits_{i\to+\infty}m_{\varepsilon_i}=m$. By Lemma \ref{lem3.1}, we have that $m\in\cM^\Lambda_{max}(f)$. For any fixed $i\geq1$, since $m_{\varepsilon_i}\in \cM^\Lambda_{max}(f+\varepsilon g)$, we have $$\int f \mathrm{d}m_{\varepsilon_i}+\varepsilon_i\int g\mathrm{d}m_{\varepsilon_i}\geq\int f\mathrm{d\mu}+\varepsilon_i\int g\mathrm{d}\mu,$$ for any $\mu\in\Lambda$. For any fixed $\mu\in\cM^\Lambda_{max}(f)$, we have $$\int f\mathrm{d\mu}\geq\int f\mathrm{d}m_{\varepsilon_i},$$ for any $i\geq1$. Combining these two estimates, we obtain $\varepsilon_i\int g\mathrm{d}m_{\varepsilon_i}\geq\varepsilon_i\int g\mathrm{d}\mu$, i.e. $\int g\mathrm{d}m_{\varepsilon_i}\geq\int g\mathrm{d}\mu$ for any $\mu\in\cM^\Lambda_{max}(f)$, any $i\geq1$. Let $i\to+\infty$, then $\int g\mathrm{d}m\geq\int g\mathrm{d}\mu$ for any $\mu\in\cM^\Lambda_{max}(f)$. Hence, $m\in\cM^\Lambda_{max}(g\mid f)$.
\end{proof}
Let's finish the proof of Theorem \ref{the1.2}.
\begin{proof}[Proof of Theorem \ref{the1.2}.]
	Since $\varSigma$ is a topological vector space which is densely and continuously embedded in $C(X)$, there exists a countable subset of $\varSigma$ which is dense in $C(X)$. Denote the subset by $\{\gamma_i\mid i\in\mathbb{N^{+}}\}$. For any $f\in\varSigma$, any $i\geq1$, define $$M^\Lambda_i(f):=\left\{\int \gamma_i\mathrm{d\mu}\mid\mu\in\cM^\Lambda_{max}(f)\right\}.$$ Then $A\in\mathcal{U}^\Lambda_\varSigma$ if and only if $M^\Lambda_i(f)$ is a singleton for any $i\geq1$. Define $$E^\Lambda_{i,j}:=\left\{f\in\varSigma\mid \diam(M^\Lambda_i(f))\geq\frac{1}{j}\right\},$$ where $i\geq1$, $j\geq1$. Then
	\[
	\begin{split}
		\varSigma\setminus\mathcal{U}^\Lambda_\varSigma&=\left\{f\in\varSigma\mid\diam(M^\Lambda_i(f))>0\text{ for some }i\in\mathbb{N^+}\right\}\\&=\bigcup_{i=1}^{+\infty}\bigcup_{j=1}^{+\infty}\left\{f\in\varSigma\mid\diam(M^\Lambda_i(f))\geq\frac{1}{j}\right\}\\&=\bigcup_{i=1}^{+\infty}\bigcup_{j=1}^{+\infty}E^\Lambda_{i,j}.
	\end{split}
	\] And $$\mathcal{U}^\Lambda_\varSigma=\bigcap_{i=1}^{+\infty}\bigcap_{j=1}^{+\infty}(\varSigma\setminus E^\Lambda_{i,j}).$$ 
	To prove Theorem \ref{the1.2}, it is enough to show that $E^\Lambda_{i,j}$ is closed and has empty interior in $\varSigma$.
	
	To show that $E^\Lambda_{i,j}$ is closed in $\varSigma$, let $\{f_n\}_{n=1}^{+\infty}\subset E^\Lambda_{i,j}$ with $\lim\limits_{n\to+\infty}f_n=f\in\varSigma$. Write $\diam(M^\Lambda_i(f_n))=\int\gamma_i\mathrm{d}\mu_{n}^+-\int\gamma_i\mathrm{d}\mu_{n}^-\geq\frac{1}{j}$ for measures $\mu_{n}^-,\mu_{n}^+\in\cM^\Lambda_{max}(f_n)$. Then there exists $\mu^-,\mu^+\in\Lambda$ such that $\lim\limits_{n\to+\infty}\mu_{r_n}^-=\mu^-$ and $\lim\limits_{n\to+\infty}\mu_{s_n}^+=\mu^+$, where $r_1<r_2<\cdots, s_1<s_2<\cdots$. By Lemma \ref{lem3.1}, $\mu^-,\mu^+\in\cM^\Lambda_{max}(f)$. Since $\int\gamma_i\mathrm{d}\mu_{r_n}^-\to\int\gamma_i\mathrm{d}\mu^-$ and $\int\gamma_i\mathrm{d}\mu_{s_n}^+\to\int\gamma_i\mathrm{d}\mu^+$, we have $\int\gamma_i\mathrm{d}\mu^+-\int\gamma_i\mathrm{d}\mu^-\geq\frac{1}{j}$. Hence, $f\in E^\Lambda_{i,j}$ and $E^\Lambda_{i,j}$ is closed.
	
	To show that $E^\Lambda_{i,j}$ has empty interior in $\varSigma$, let $f\in E^\Lambda_{i,j}$ be arbitrary. Choose $g\in\Sigma$. By Lemma \ref{lem5.1}, $$\left\{\int g\mathrm{d\mu}\mid\mu\in\cM^\Lambda_{max}(f+\varepsilon g)\right\}\longrightarrow\left\{\beta^\Lambda(g\mid f)\right\} \text{  as } \varepsilon\searrow 0.$$ Hence, $\diam(M^\Lambda_i(f+\varepsilon g))<\frac{1}{j}$ for $\varepsilon>0$ sufficiently small. As a result, we have that $E^\Lambda_{i,j}$ has empty interior in $\varSigma$.
\end{proof}

\bigskip

$\mathbf{Acknowledgements.}$ We would like to thank X. Hou for his suggestions for the paper. X. Tian is supported by National Natural Science Foundation of China (grant No.12071082) and in part by Shanghai Science and Technology Research Program (grant No.21JC1400700).

\end{document}